\newcommand{\bE}{\mathbb{E}}
\newcommand{\bN}{\mathbb{N}}
\newcommand{\bP}{\mathbb{P}}
\newcommand{\bR}{\mathbb{R}}
\newcommand{\bT}{\mathbb{T}}
\newcommand{\bU}{\mathbb{U}}
\newcommand{\cZ}{\mathcal{Z}}
\newcommand{\cB}{\mathcal{B}}
\newcommand{\cP}{\mathcal{P}}
\newcommand{\cT}{\mathcal{T}}
\numberwithin{equation}{section}
\theoremstyle{plain}
\newtheorem{theorem}{Theorem}[section]
\theoremstyle{remark}
\theoremstyle{definition}
\newtheorem{definition}[theorem]{Definition}
\theoremstyle{theorem}
\newtheorem{corollary}[theorem]{Corollary}
\newtheorem{example}[theorem]{Example}
\theoremstyle{theorem}
\newtheorem{lemma}[theorem]{Lemma}
\theoremstyle{theorem}
\newtheorem{prop}[theorem]{Proposition}
\theoremstyle{theorem}
\begin{document}

\begin{frontmatter}

\title{Recursive construction of continuum random trees}

\runtitle{Recursive construction of CRTs}

\begin{aug}

\author{\fnms{Franz} \snm{Rembart}\thanksref{T1}\ead[label=e1]{franz.rembart@stats.ox.ac.uk}}
\and
\author{\fnms{Matthias} \snm{Winkel}\thanksref{T2}\ead[label=e2]{winkel@stats.ox.ac.uk}}
\address{Department of Statistics, University of Oxford, 24-29 St Giles, Oxford OX1 3LB, United Kingdom \\ \printead{e1,e2}}
\today
\runauthor{F. Rembart and M. Winkel}

\affiliation{University of Oxford}
\thankstext{T1}{Supported by EPSRC grant EP/P505666/1}
\thankstext{T2}{Supported by EPSRC grant EP/K029797/1}

\end{aug}

\begin{abstract}
We introduce a general recursive method to construct continuum random trees (CRTs) from independent copies of a random string of beads, that is, any random interval equipped 
with a random discrete probability measure, and from related structures. We prove the existence of these CRTs as a new application of the fixpoint method for recursive
distribution equations formalised in high generality by Aldous and Bandyopadhyay. 

We apply this recursive method to show the convergence to CRTs of various tree growth 
processes. We note alternative constructions of existing self-similar CRTs in the sense of Haas, Miermont and Stephenson, and we give for the first time 
constructions of random compact $\mathbb R$-trees that describe the genealogies of Bertoin's self-similar growth fragmentations. In forthcoming work, we develop 
further applications to embedding problems for CRTs, providing a binary embedding of the stable line-breaking construction that solves an open problem of Goldschmidt and 
Haas.\vspace{-0.2cm}

\end{abstract}


\begin{keyword}
\kwd{string of beads}
\kwd{$\mathbb{R}$-tree}
\kwd{continuum random tree}
\kwd{self-similar tree}
\kwd{stable tree}
\kwd{recursive distribution equation}
\kwd{tree growth process}
\kwd{growth fragmentation}
\kwd{Hausdorff dimension\vspace{-0.2cm}}

\end{keyword}

\begin{keyword}[class=MSC]
\kwd{60J80}
\kwd{60J05}
\end{keyword}
\end{frontmatter}



\section{Introduction}
\label{Intro}

We introduce a new recursive method to construct continuum random trees (CRTs) from independent copies of a random string of beads, that is, any random interval equipped with a random discrete probability measure. Our construction is based on the concept of a recursive tree framework as formalised by Aldous and Bandyopadhyay \cite{14} to unify various constructions that relate to some \textit{discrete} branching structure of potentially infinite depth, but not a priori to construct CRTs. This construction method allows us to go beyond the class of self-similar trees introduced and constructed by Haas, Miermont and Stephenson \cite{33,34}, and in particular to generalise the bead-splitting processes of \cite{37}. 

Following \cite{20}, we call a separable metric space $(T,d)$ an \textit{$\mathbb R$-tree} if any $x,y \in T$ are connected by a unique injective path $[[x,y]] \subset T$ and if this path is isometric to the interval $[0, d(x,y)]$. The $\mathbb R$-trees in this paper are equipped with a distinguished \textit{root} vertex $\rho \in T$. We also consider \textit{weighted $\mathbb R$-trees} $(T, d, \rho, \mu)$ further equipped with a probability measure $\mu$ on the Borel sets $\mathcal B(T)$ of $(T,d)$. \textit{Continuum random trees} (CRTs) are random variables with values in a space of continuum trees, where a \textit{continuum tree} is a weighted $\mathbb R$-tree $(T,d,\rho,\mu)$ whose probability measure $\mu$ is supported by the set of leaves of $T$, has no atoms, and assigns positive mass to all subtrees above $x$ for each non-leaf $x \in T$. For $\beta,c\in(0,\infty)$, we consider the tree $(T,c^\beta d,\rho,c\mu)$ with all distances scaled by $c^\beta$ and masses scaled by $c$. 

We build a random $\mathbb R$-tree $\mathcal T$ from a random string of beads $\xi$ and rescaled i.i.d. random $\mathbb R$-trees $(\mathcal T_i,i\geq 1)$, as follows. The tree $\mathcal T$ is the output of a map $\phi_\beta$, taking $(\mathcal T_i, i \geq 1)$ and attaching these trees to the locations of the atoms of $\xi$, with lengths rescaled by the $\beta$-power of the respective atom mass for some $\beta \in (0, \infty)$,
\begin{equation} \mathcal T:=\phi_\beta \left(\xi, \mathcal T_i, i \geq 1\right). \label{AB1} \end{equation}
See Figure \ref{fig1}.
\begin{center}
\begin{figure}[h]\vspace{-0.9cm}

\includegraphics[scale=0.35]{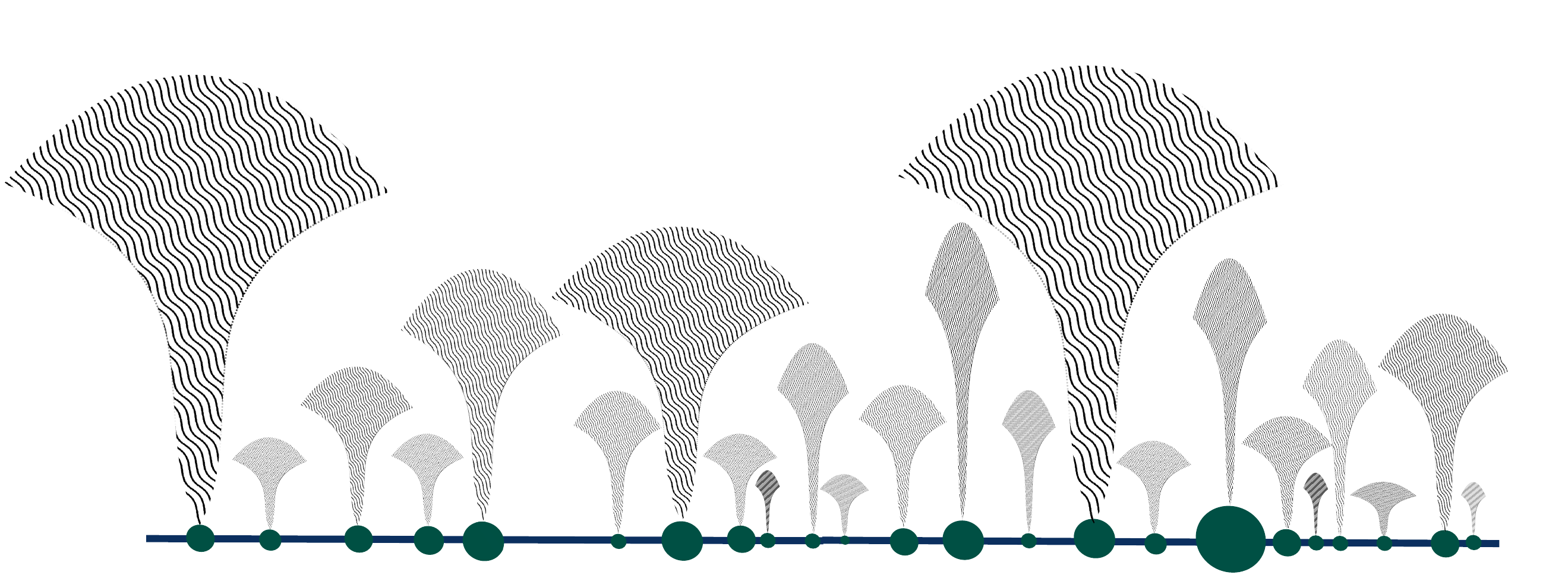}
\put(0,0){\normalsize $\xi$}
\put(0,30){\normalsize rescaled $\mathcal{T}_i$}
\caption{Rescaled trees $\mathcal{T}_i$, $i\ge 1$, attached to locations of the atoms of a string of beads $\xi$ to form $\phi_\beta(\xi,\mathcal{T}_i,i\ge 1)$.}\label{fig1}\vspace{-0.2cm}
\end{figure}
\end{center}

\vspace{-0.65cm}

\noindent Denote by $\mathcal P(\mathbb T)$ the space of probability measures on the space $\mathbb T$ of (isometry classes of) compact $\mathbb R$-trees. Given (the distribution of) a random string of beads $\xi$ on a space of strings of beads, \eqref{AB1} yields a map $\Phi_\beta$ from $\mathcal P(\mathbb T)$ to $\mathcal P(\mathbb T)$, which associates with the 
common distribution of $\mathcal T_i$, $i\geq 1$, the distribution of $\phi_\beta(\xi,\mathcal T_i,i\geq 1)$. Similarly, we can also interpret \eqref{AB1} as a map from the space $\mathbb T_{\rm w}$ of (equivalence classes of) compact weighted $\mathbb R$-trees to $\mathbb T_{\rm w}$. See Section \ref{Sec31} for a more detailed introduction to $\mathbb R$-trees and CRTs, and Section \ref{rtp} for a formal definition of $\phi_\beta$ as a measurable function.   

Following \cite{14}, we show the existence of a fixpoint distribution in $\mathcal P(\mathbb T)$ for any random string of beads $\xi$. 
This establishes the existence of a large family of new CRTs, also including all so-called self-similar CRTs (and weighted $\mathbb R$-trees), which have been constructed differently by Haas, Miermont and Stephenson \cite{33,34}. This allows us to prove the convergence to such a CRT of various tree growth procedures based on an i.i.d. family of the given random string of beads. The following is a first recursive construction.

\begin{theorem}[Recursive construction of binary CRTs] \label{introthm}
Let $p\ge 1$, $\beta>1/p$ and let $\xi=(\check{\mathcal T}_0, \check{\mu}_0)$ be a random string of beads $\xi$ of length $L$ with $\mathbb{E}[L^p]<\infty$. For $n \geq 0$, to obtain $(\check{\mathcal T}_{n+1}, \check{\mu}_{n+1})$ conditionally given $(\check{\mathcal T}_{n}, \check{\mu}_{n})$, attach to each atom $x \in \check{\mathcal T}_{n}$ of $\check{\mu}_{n}$ an independent isometric copy of $\xi$ with metric rescaled by $\check{\mu}_n(x)^{\beta}$, and mass measure rescaled by $\check{\mu}_{n}(x)$. Then there exists a compact CRT $(\check{\mathcal T}, \check{\mu})$ such that
\begin{equation*} \lim \limits_{n \rightarrow \infty} \left(\check{\mathcal T}_n, \check{\mu}_n\right)= \left(\check{\mathcal T}, \check{\mu}\right) \quad \text{a.s. in the Gromov-Hausdorff-Prokhorov topology on $\mathbb T_{\rm w}$}. \end{equation*}
\end{theorem}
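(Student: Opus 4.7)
The strategy is to show that $(\check{\mathcal T}_n,\check{\mu}_n)_{n\ge 0}$ is almost surely Cauchy in the Gromov--Hausdorff--Prokhorov metric $d_{\rm GHP}$, and then to identify the limit. The construction is pathwise monotone: each atom $x_j^{(n)}$ of $\check{\mu}_n$ is replaced at step $n+1$ by a freshly grafted copy of $\xi$, rescaled in length by $\check{\mu}_n(x_j^{(n)})^{\beta}$ and in mass by $\check{\mu}_n(x_j^{(n)})$. The natural inclusion $\iota_n\colon\check{\mathcal T}_n\hookrightarrow\check{\mathcal T}_{n+1}$ is therefore an isometry, and a transport argument (each old atom's mass is redistributed along the attached segment) shows that both the Hausdorff and Prokhorov components of $d_{\rm GHP}(\check{\mathcal T}_n,\check{\mathcal T}_{n+1})$ are controlled by
\begin{equation*}
\Delta_n:=\max_j\check{\mu}_n(x_j^{(n)})^{\beta}L_j^{(n+1)},
\end{equation*}
where the $L_j^{(n+1)}$ are the i.i.d.\ interval lengths of the newly grafted strings of beads, independent of $\check{\mathcal T}_n$.

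The core estimate is $\sum_n\Delta_n<\infty$ almost surely. Using $\max\le(\sum)^{1/p}$ and Jensen (valid as $p\ge 1$),
\begin{equation*}
\mathbb E[\Delta_n]\le\bigl(\mathbb E[\Delta_n^p]\bigr)^{1/p}\le\bigl(\mathbb E[L^p]\,\mathbb E[S_n]\bigr)^{1/p},\qquad S_n:=\sum_j\check{\mu}_n(x_j^{(n)})^{\beta p}.
\end{equation*}
Since at step $n+1$ each atom mass $\check{\mu}_n(x_j^{(n)})$ splits into sub-atoms of masses $\check{\mu}_n(x_j^{(n)})q_{j,k}^{(n+1)}$, where the $q_{j,k}^{(n+1)}$ are the atom masses of a fresh copy of $\xi$, conditioning on $\check{\mathcal T}_n$ yields the multiplicative recursion $\mathbb E[S_{n+1}\mid\check{\mathcal T}_n]=S_n\,\mathbb E[Y]$ with $Y:=\sum_kq_k^{\beta p}$. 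Because $\beta p>1$ and $\sum_kq_k=1$, one has $Y\le 1$ a.s., and $\theta:=\mathbb E[Y]<1$ as soon as $\xi$ has more than one atom with positive probability. Iteration gives $\mathbb E[S_n]\le\theta^n$ and hence geometric decay of $\mathbb E[\Delta_n]$, forcing $\sum_n\Delta_n<\infty$ a.s.

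Cauchy-ness in $d_{\rm GHP}$ combined with completeness of $(\mathbb T_{\rm w},d_{\rm GHP})$ then delivers an a.s.\ limit $(\check{\mathcal T},\check{\mu})$. To confirm that this is a CRT, I would verify that $\check{\mu}$ is diffuse and supported on leaves by following any strictly nested sequence of atoms $x_{j_n}^{(n)}$: the masses $\check{\mu}_n(x_{j_n}^{(n)})$ decay at a rate governed by the typical splits of $\xi$, while the grafted segments telescope to a finite depth, producing a leaf of zero mass. Finally, applying one step of growth before passing to the limit shows that the law of $(\check{\mathcal T},\check{\mu})$ is invariant under $\Phi_\beta$, identifying it with the fixpoint predicted by the Aldous--Bandyopadhyay framework of Section~\ref{rtp}.

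The main obstacle is the moment recursion. One must deploy $p\ge 1$ and $\beta p>1$ in the correct directions—Jensen one way, the bound $Y\le 1$ the other—to turn the random pathwise growth into a genuinely geometric decay, and one must exclude the degenerate case in which $\xi$ is almost surely a single atom, for which $\theta=1$, the lengths accumulate, and the limit fails to be compact.
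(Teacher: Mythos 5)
Your argument is correct and follows the same underlying mechanism as the paper's proof, but it is organized somewhat differently and is in fact streamlined in one respect. The paper first establishes the contraction of $\Phi_\beta$ on the Wasserstein space $(\mathcal P_p,W_p)$ (Lemma \ref{contract}, Corollary \ref{fixpoint}) to obtain both the fixpoint law and the uniform moment bound $M:=\sup_n\mathbb E[{\rm ht}(\check{\mathcal T}_n)^p]<\infty$, which it then feeds into a bound on $\mathbb E[d_{\rm GH}(\check{\mathcal T}_m,\check{\mathcal T}_n)^p]$ for arbitrary $m<n$ followed by Markov and Borel--Cantelli (Proposition \ref{recconcrt}). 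You instead bound the one-step increment $d_{\rm GHP}(\check{\mathcal T}_n,\check{\mathcal T}_{n+1})$ by $\Delta_n$, whose $p$th moment is controlled directly by $\mathbb E[L^p]\,\mathbb E[S_n]$ and by the recursion $\mathbb E[S_{n+1}\mid\check{\mathcal T}_n]=\theta S_n$ with $\theta=\mathbb E[\sum_kq_k^{p\beta}]<1$; this avoids any reference to full subtree heights and hence to $M$, which is a genuine simplification of the telescoping argument even though the key geometric factor $\theta$ is exactly the same quantity as in the paper's \eqref{pbetha}. Two remarks: (a) your caveat about the degenerate single-atom string is unnecessary under the paper's definition of a string of beads, since (ii)$_{\rm s}$ in \eqref{Xi} forces $p_1<1$ and $\sum_ip_i=1$, so $\theta<1$ automatically; (b) your verification that the limit is a continuum tree is only sketched, and in particular you omit property (iii) (positive mass above every non-leaf); the paper treats all three properties carefully in Corollary 3.11, using the fact that $L=\sup\{X_i\colon P_i>0\}$ guarantees a mass-bearing atom above or at the top of any skeletal point, and using the estimate $\max_{\mathbf i\in\mathbb N^n}\check P_{\mathbf i}\to 0$ together with dominated convergence to show that the skeleton is $\check\mu$-null. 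The fixpoint identification at the end also implicitly requires the measurability/continuity of $\phi_\beta$, which the paper isolates in Proposition \ref{meas}; your proof would need to cite or reproduce something like that to pass to the limit under $\Phi_\beta$.
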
 

The attachment procedure for the above construction will be defined precisely in Section \ref{Sec4}. Theorem \ref{introthm} implies the convergence to a CRT of the bead splitting processes as introduced in \cite{37}, based on an arbitrary random strings of beads. 

\begin{corollary}[Bead splitting processes] \label{introcornew}
  Let $p\ge 1$, $\beta>1/p$, and let $\xi=({\mathcal T}_0, {\mu}_0)$ be a random string of beads of length $L$ with $\mathbb{E}[L^p]<\infty$. For $k \geq 0$, to 
  obtain $({\mathcal T}_{k+1},{\mu}_{k+1})$ conditionally given $({\mathcal T}_{k}, \mu_{k})$,  pick an atom $J_k \in {\mathcal T}_{k}$ from ${\mu}_{k}$ and attach 
  at $J_k$ an independent isometric copy of $\xi$ with metric rescaled by ${\mu}_k(x)^{\beta}$, and mass measure rescaled by ${\mu}_{k}(x)$. Then there exists a compact
  CRT $({\mathcal T},\mu)$ such that
$$ \lim \limits_{k \rightarrow \infty} \left({\mathcal T}_k,\mu_k\right)= \left({\mathcal T},\mu\right) \quad \text{a.s. in the Gromov-Hausdorff-Prokhorov topology on $\mathbb T_{\rm w}$}.$$
\end{corollary}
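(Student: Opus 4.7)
The plan is to couple the sequential (bead-splitting) process of Corollary \ref{introcornew} with the level-by-level process $(\check{\mathcal T}_n,\check\mu_n)$ of Theorem \ref{introthm} on a single probability space, identify the former as an interleaving of the latter, and then sandwich in the Gromov--Hausdorff--Prokhorov topology. For the coupling, I index potential attachment sites by the Ulam--Harris tree $\mathcal U=\bigsqcup_{n\ge 0}\mathbb N^n$ and fix an i.i.d.\ family $(\xi^v)_{v\in\mathcal U}$ of copies of $\xi$. Using these, $(\check{\mathcal T}_n,\check\mu_n)$ is built as in Theorem \ref{introthm} by attaching $\xi^v$, with length rescaled by $M_v^\beta$ and mass by $M_v$, at the atom labelled by $v\in\mathcal U$, where $M_v$ denotes the product of atom-masses along the ancestral chain from $\emptyset$ to $v$ and $M_\emptyset:=1$. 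I realise the sequential process on the same space by using the same $\xi^{V_k}$ when the atom with Ulam--Harris label $V_k$ is picked from $\mu_k$ at step $k$, and write $A_k\subset\mathcal U$ for the set of labels attached by step $k$.

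Next I show that every $v\in\mathcal U$ is eventually picked. Since $\xi$ is a discrete probability measure with positive atom-masses, $M_v>0$ a.s. Once $\pi(v)\in A_j$ but $v\notin A_j$, the atom $x_v$ is present in $\mu_j$ with constant mass $M_v$, so at each such step $v$ is picked with conditional probability $M_v$. Hence the waiting time after the parent is picked is geometric with parameter $M_v$, and in particular finite a.s.; by induction on $|v|$, every $v\in\mathcal U$ is eventually in $A_k$, so
\[K_n:=\inf\{k\ge 0:\{v\in\mathcal U:|v|\le n\}\subset A_k\}<\infty\quad\text{a.s., for every }n\ge 0.\]

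For the sandwich, once $k\ge K_n$ the canonical inclusions $\check{\mathcal T}_n\subset\mathcal T_k\subset\mathcal T$ hold inside the common limit compact $\mathbb R$-tree $\mathcal T$ of Theorem \ref{introthm}, and each boundary atom of $\mu_k$ lies in the subtree of $\mathcal T$ rooted at some boundary atom of $\check\mu_n$. It follows that both the Hausdorff displacement of $\mathcal T_k$ in $\mathcal T$ and the Prokhorov displacement of $\mu_k$ from $\mu$ are bounded by the maximal diameter of the depth-$(n+1)$ subtrees of $\mathcal T$, a quantity which also dominates $d_{\rm GHP}(\check{\mathcal T}_n,\mathcal T)$. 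Theorem \ref{introthm} yields $d_{\rm GHP}(\check{\mathcal T}_n,\mathcal T)\to 0$ a.s.\ as $n\to\infty$, and the claim follows on letting first $k\to\infty$ and then $n\to\infty$.

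The main obstacle is the Prokhorov half of the sandwich: one has to verify that the cost of transporting each sequential boundary mass $M_v=\mu_k(x_v)$ onto the full subtree above $x_v$ in $\mathcal T$ is at most that subtree's diameter, and that branches which the sequential process has followed deeper than level $n+1$ still contribute only inside the depth-$(n+1)$ subtrees, so that the depth-$(n+1)$ diameter bound remains valid. Everything else is formal bookkeeping together with the a.s.\ GHP convergence supplied by Theorem \ref{introthm}.
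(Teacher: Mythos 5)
Your coupling is the same one the paper uses: both the level-by-level process $(\check{\mathcal T}_n,\check\mu_n)$ and the sequential process $({\mathcal T}_k,\mu_k)$ are realised from the same i.i.d.\ family $(\xi_{\mathbf i},\mathbf i\in\mathbb U)$, so that $\mathcal T_k$ sits inside the limit $\check{\mathcal T}$ of Theorem~\ref{introthm} for every $k$, and the task becomes to show that the sequential process eventually exhausts $\check{\mathcal T}$ in the Hausdorff--Prokhorov sense.

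The gap is in the claim $K_n<\infty$ a.s. You define
\[
K_n:=\inf\{k\ge 0:\{v\in\mathcal U:|v|\le n\}\subset A_k\},
\]
but $\{v:|v|\le n\}$ is an \emph{infinite} set whenever $n\ge 1$ and the string of beads has infinitely many atoms (which is the generic situation, e.g.\ any $(\beta,\beta)$-string of beads), while $A_k$ gains exactly one label per step, so $|A_k|=k+1$. Hence $K_n=\infty$ a.s.\ for $n\ge 1$. The step ``every $v$ is eventually in $A_k$, so $K_n<\infty$'' is a non-sequitur: each of infinitely many geometric waiting times being finite does not make their supremum over a fixed generation finite. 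Consequently the entire sandwich built on ``once $k\ge K_n$'' never gets off the ground. Note also that your geometric-parameter $M_v$ tends to $0$ along a generation, so the waiting times are unbounded in $L^1$ as well --- there is no uniformity to exploit.

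The fix, which is what the paper does, is to replace the depth-$n$ target with an $\epsilon$-scale target and invoke compactness of $\check{\mathcal T}$. For $\epsilon>0$ only finitely many connected components of $\{x\in\check{\mathcal T}:{\rm ht}(\check{\mathcal T}_x)\le\epsilon\}$ have height exactly $\epsilon$. Each such component is reached once a finite chain of spinal atoms $\check{X}_{i_1},\check{X}_{i_1i_2},\dots,\check{X}_{i_1\cdots i_m}$ has been picked, and your geometric argument does apply to that finite chain. Once all finitely many $\epsilon$-components are intersected, $\delta_{\rm HP}\bigl((\mathcal T_k,\mu_k),(\check{\mathcal T},\check\mu)\bigr)\le\epsilon$, since the parts of $\check{\mathcal T}$ not yet reached, and the atomic masses still carried on the frontier of $\mathcal T_k$, live only in components of height $\le\epsilon$. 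Letting $k\to\infty$ and then $\epsilon\downarrow 0$ gives the claim. So your overall strategy and coupling are correct; only the finiteness step needs to be organised by $\epsilon$-height rather than by Ulam--Harris depth.
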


We will prove this corollary by embedding $({\mathcal T}_k,\mu_k)$, $k\ge 0$, into $(\check{\mathcal T},\check{\mu})$, and then showing $({\mathcal T},\mu)=(\check{\mathcal T},\check{\mu})$. Corollary \ref{introcornew} was proved in \cite[Theorem 21]{37} in the special case when the string of beads has a regenerative property in the sense of Gnedin and Pitman \cite{3}. Then $({\mathcal T},\mu)$ is a self-similar CRT where a weighted $\mathbb R$-tree $(\mathcal T, d, \mu)$ is called \textit{self-similar} if for all $t \geq 0$, conditionally given the subtree masses $(\mu(\mathcal T_i(t)), i \geq 1)$ of the connected components $(\mathcal T_i(t), i \geq 1)$ of $\{x \in \mathcal T: d(\rho, x) > t\}$, the trees $(\mathcal T_i(t), i \geq 1)$ have the same distribution as independent isometric copies of $\mathcal T$ with metric rescaled by $\mu(\mathcal T_i(t))^\beta$ and mass measures by $\mu(\mathcal T_i(t))$, cf. Haas and Miermont \cite{33} and Stephenson \cite{34}. Since we show $({\mathcal T},\mu)=(\check{\mathcal T},\check{\mu})$, Theorem \ref{introthm} gives an alternative construction of binary self-similar CRTs of \cite{33}. Examples of self-similar CRTs include Aldous' Brownian CRT \cite{6,7,8}, and Duquesne and Le Gall's stable trees \cite{36,38,16,17,18} parametrised by some $\theta \in (1,2]$. The stable tree of index $\theta=2$ is the Brownian CRT, which is binary. 

General self-similar weighted $\mathbb R$-trees can have branch points of any finite or infinite degree (as is the case for stable trees), \cite{33}, continuous mass on branches, atoms on branches and in leaves, \cite{34}. To capture these features, we add more structure to strings of beads. Specifically, while a string of beads can be represented as $([0,\ell], \sum_{i \geq 1} p_i \delta_{x_i})$ for some $p_1 \geq p_2 \geq \cdots \geq 0$, $\sum_{i \geq 1} p_i=1$, $x_i \in [0,\ell]$ distinct, we also allow $x_i$ not necessarily distinct, a measure $\lambda$ on $[0,\ell]$ and $p_i \geq 0$ with $\sum_{i\geq 1}p_i=1-\lambda([0,\ell])$. We refer to $([0,\ell], (x_i)_{i \geq 1}, (p_i)_{i \geq 1}, \lambda)$ as a \textit{generalised string}. The map \eqref{AB1} is naturally defined for generalised strings, leaving
mass $\lambda([0,\ell])$ on the branch $[0,\ell]$ according to $\lambda$. Applying $\phi_\beta$ to random generalised strings $\xi=([0,L],(X_i)_{i\geq 1},(P_i)_{i\geq 1},\Lambda)$ also leads to a more general version of Theorem \ref{introthm}.

\begin{theorem} \label{constr2} Let $p\ge 1$, let $\xi=(\check{\mathcal T}_0, (\check{X}_i^{(0)})_{i \geq 1},  (\check{P}_i^{(0)})_{i \geq 1},\check{\Lambda}_0)$ be a random generalised string of length $L$ with $\mathbb{E}[L^p]<\infty$, and let $\beta\in(0,\infty)$ such that $\mathbb{E}[\sum_{j\ge 1}P_j^{p\beta}]<1$. For $n \geq 0$, to obtain $(\check{\mathcal T}_{n+1}, (\check{X}_i^{(n+1)})_{i \geq 1},  (\check{P}_i^{(n+1)})_{i \geq 1},\check{\Lambda}_{n+1})$ conditionally given $(\check{\mathcal T}_{n}, (\check{X}_i^{(n)})_{i \geq 1},  (\check{P}_i^{(n)})_{i \geq 1},\check{\Lambda}_{n})$, attach to each $\check{X}_i^{(n)} \in \check{\mathcal T}_n$ an independent isometric copy of $\xi$ with metric rescaled by $(\check{P}^{(n)}_i)^\beta$ and measure/atom masses rescaled by $\check{P}^{(n)}_i$. Let $\check{\mu}_n=\check{\Lambda}_n + \sum_{i \geq 1}\check{P}_i^{(n)} \delta_{\check{X}_i^{(n)}}$. Then there is a random weighted $\mathbb R$-tree $(\check{\mathcal T}, \check{\mu})$ such that
\begin{equation*} \lim \limits_{n \rightarrow \infty} \left(\check{\mathcal T}_n, \check{\mu}_n\right)= \left(\check{\mathcal T}, \check{\mu}\right) \quad \text{a.s. in the Gromov-Hausdorff-Prokhorov topology on $\mathbb T_{\rm w}$}. \end{equation*}
\end{theorem}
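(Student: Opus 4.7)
The plan is to realize the recursion $(\check{\mathcal T}_n,\check{\mu}_n)_{n\ge 0}$ on a single probability space as a nested sequence of weighted $\mathbb R$-trees (via the Aldous--Bandyopadhyay recursive tree framework developed earlier in the paper, so that $\check{\mathcal T}_n$ isometrically embeds into $\check{\mathcal T}_{n+1}$ with each atom $\check{X}_i^{(n)}$ identified with the root of the newly attached copy of $\xi$), and then to show that the sequence is almost surely Cauchy in the Gromov--Hausdorff--Prokhorov (GHP) metric. The key scalar to track is the ``$p\beta$-mass sum''
$$\Sigma_n:=\sum_{i\ge 1}\bigl(\check P_i^{(n)}\bigr)^{p\beta}.$$
By construction, the level-$(n+1)$ atom masses factor as $\check P_{(i,k)}^{(n+1)}=\check P_i^{(n)}P_k^{(i,n+1)}$, with $(P_k^{(i,n+1)})_k$ the atom masses of the $i$-th independent copy of $\xi$, so $\Sigma_{n+1}=\sum_i(\check P_i^{(n)})^{p\beta}\sum_k(P_k^{(i,n+1)})^{p\beta}$. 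Conditioning on the $\sigma$-algebra $\mathcal F_n$ generated by the first $n$ levels, the inner sums are i.i.d. with common mean $c:=\mathbb E[\sum_j P_j^{p\beta}]<1$, so $\mathbb E[\Sigma_{n+1}\mid\mathcal F_n]=c\,\Sigma_n$ and $\mathbb E[\Sigma_n]\le c^{n+1}$.

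The next step is to dominate the one-step GHP distance by the worst-case length of a newly attached branch,
$$\Delta_n:=\sup_{i\ge 1}\bigl(\check P_i^{(n)}\bigr)^\beta L_i^{(n)},$$
where $L_i^{(n)}$ is the length of the $i$-th new copy of $\xi$. Using the natural correspondence arising from the embedding $\check{\mathcal T}_n\hookrightarrow\check{\mathcal T}_{n+1}$, the Hausdorff distance is at most $\Delta_n$; and since each atomic mass $\check P_i^{(n)}$ at the level-$n$ location is redistributed to a probability measure on the attached string of diameter $(\check P_i^{(n)})^\beta L_i^{(n)}$, while the previously built continuous mass $\check\Lambda_n$ is undisturbed on $\check{\mathcal T}_n$, the Prokhorov distance between $\check\mu_n$ and $\check\mu_{n+1}$ is likewise at most $\Delta_n$. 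Bounding the supremum by the sum gives $\Delta_n^p\le\sum_i(\check P_i^{(n)})^{p\beta}(L_i^{(n)})^p$, and independence of $(L_i^{(n)})_i$ from $\mathcal F_n$ together with Jensen's inequality yields
$$\mathbb E[\Delta_n]\le\bigl(\mathbb E[L^p]\bigr)^{1/p}\,c^{(n+1)/p}.$$

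Since $c<1$, this bound is summable in $n$, so $\sum_{n\ge 0}\Delta_n<\infty$ almost surely by monotone convergence. This makes $(\check{\mathcal T}_n,\check{\mu}_n)_{n\ge 0}$ an almost sure Cauchy sequence in $(\mathbb T_{\rm w},d_{\mathrm{GHP}})$; by completeness of this space it converges almost surely to a limit $(\check{\mathcal T},\check{\mu})$. Moreover, $\mathrm{diam}(\check{\mathcal T}_n)\le L+2\sum_{k<n}\Delta_k$ is almost surely bounded in $n$, so the limit $\check{\mathcal T}$ is compact and $\check\mu$ is a probability measure on it.

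The main obstacle is the second step: justifying the pointwise GHP bound $d_{\mathrm{GHP}}(\check{\mathcal T}_n,\check{\mathcal T}_{n+1})\le\Delta_n$. This requires choosing a suitable correspondence and coupling of the measures, and verifying that both the continuous mass $\check\Lambda$ on previously built edges and the refined atom masses transport at cost at most $\Delta_n$; once the nested coupling from the recursive tree framework is in place, the bound reduces to a direct Hausdorff-plus-Wasserstein computation. The rest of the argument is moment control, with the hypothesis $\mathbb E[\sum_j P_j^{p\beta}]<1$ playing the role of a contraction constant for the associated recursive distribution equation.
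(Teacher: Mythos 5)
Your proposal is correct, and on the measure side it takes a genuinely different and somewhat cleaner route than the paper.

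For the tree convergence, your argument is essentially the paper's (Proposition~\ref{recconcrt}): nest the $\check{\mathcal{T}}_n$ on one probability space, track $\Sigma_n=\sum_i(\check P_i^{(n)})^{p\beta}$ whose mean decays geometrically with ratio $c=\mathbb{E}[\sum_j P_j^{p\beta}]<1$, and deduce a.s.\ Cauchyness. The paper bounds $d_{\rm GH}(\check{\mathcal{T}}_m,\check{\mathcal{T}}_n)$ by the sup of rescaled \emph{subtree heights} over generation $m$ and applies Markov plus Borel--Cantelli, whereas you bound the \emph{consecutive} increment by $\Delta_n=\sup_i(\check P_i^{(n)})^\beta L_i^{(n)}$ and observe that $\sum_n\mathbb{E}[\Delta_n]\le(\mathbb{E}[L^p])^{1/p}\sum_n c^{(n+1)/p}<\infty$, so $\sum_n\Delta_n<\infty$ a.s.; both variants give the same conclusion and the moment computations are equally elementary.

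For the mass measure, the paper takes a detour that you avoid: it realizes everything in $l^1(\mathbb{U})$ (Corollary~\ref{recconcrt2}), defines $\check\mu_n^{\rm emb}$ by an explicit formula (\ref{massmeas}), and proves weak convergence in Proposition~\ref{mmreccon} by noting that finite-dimensional marginals of the $\check\mu_n^{\rm emb}$ are eventually constant and that the family is tight because the limit tree is compact. Your route is more direct: the passage $\check\mu_n\to\check\mu_{n+1}$ is realized by a measure-preserving map that pushes the atom mass at $\check X_i^{(n)}$ onto the attached rescaled string, which moves every point by at most $(\check P_i^{(n)})^\beta L_i^{(n)}\le\Delta_n$, while all previously placed $\check\Lambda$-mass is left fixed. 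Such a coupling with $W_\infty$-cost $\le\Delta_n$ immediately gives $\delta_{\rm P}(\check\mu_n,\check\mu_{n+1})\le\Delta_n$, so the same summable bound controls both the Hausdorff and the Prokhorov components of $d_{\rm GHP}$ in one stroke. This is a genuine simplification: it bypasses the finite-dimensional-marginal and tightness arguments entirely, at the cost of having to set up the nested coupling cleanly, which you correctly flag as the one piece to formalize. The paper's $l^1(\mathbb{U})$ embedding (Corollary~\ref{recconcrt2} and \eqref{Xcheck}) is precisely the device that makes the nesting and the push-forward coupling concrete, so the gap you identify is filled by machinery the paper already builds; with that in hand, your Hausdorff-plus-Wasserstein bound closes the proof.
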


In the above construction, Theorem \ref{constr2}, we obtain a CRT (in the strict sense defined above) if and only if $\xi$ has $\Lambda=0$ and $L=\sup_{i\ge 1\colon P_i>0}X_i$. By our method, every self-similar CRT (indeed every random weighted $\mathbb R$-tree constructed in Theorem \ref{constr2}) is uniquely characterised by fixpoint equations. As an example, we obtain a new fixpoint characterisation of the stable trees, in the case where $\xi$ is a $\beta$-generalised string, which we define as follows, in terms of Poisson-Dirichlet distributions
\cite{5}.

\begin{definition}[$(\beta, \beta)$-string of beads and $\beta$-generalised string]\label{introdef}
Let $\beta \in (0,1)$, $(Q_m, m \geq 1) \sim {\rm PD}(\beta, \beta)$ with $\beta$-diversity $L=\lim_{m \rightarrow \infty} m \Gamma(1-\beta) Q_m^\beta$, and $(U_m, m\geq 1)$ i.i.d. Unif$([0,1])$. The weighted random interval $([0,L], \sum_{m \geq 1} Q_m \delta_{L U_m})$ is called a \textit{$(\beta, \beta)$-string of beads} \cite{1}. 

For $\beta \in (0,1/2]$, consider $(R_j^{(m)}, j \geq 1) \sim {\rm PD}(1-\beta, -\beta)$, $m\ge 1$, i.i.d. and let $(P_i, i \geq 1)$ be the decreasing rearrangement of $(Q_m R_{j}^{(m)}, j \geq 1, m \geq 1)$ and $X_i=LU_m$ if $P_i=Q_mR_j^{(m)}$. For $\lambda=0$, $([0,L], (X_i)_{i \geq 1}, (P_i)_{i \geq 1}, \lambda)$ is called a \textit{$\beta$-generalised string}. \end{definition}

\begin{theorem} \label{fixp} The distribution of the stable tree of index $\theta \in (1,2]$ is the unique solution to the distributional fixpoint equation \eqref{AB1} associated with a $(1-1/\theta)$-generalised string. The fixpoint is attractive. 
\end{theorem}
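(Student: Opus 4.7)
The plan is to deduce Theorem~\ref{fixp} from Theorem~\ref{constr2} applied to $\xi$ a $(1-1/\theta)$-generalised string, which yields a unique attractive fixpoint, and then to identify this fixpoint with the stable tree of index $\theta$ by verifying that the stable tree satisfies the same distributional fixpoint equation via its spinal decomposition.

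First I would verify the two hypotheses of Theorem~\ref{constr2} for $\beta=1-1/\theta\in(0,1/2]$. The $\beta$-diversity $L$ of $\mathrm{PD}(\beta,\beta)$ is a generalised Mittag--Leffler variable with finite moments of all orders, so $\mathbb{E}[L^p]<\infty$ for every $p\ge 1$. For the mass condition $\mathbb{E}[\sum_j P_j^{p\beta}]<1$, independence of $(Q_m)$ and the i.i.d.\ family $(R^{(m)}_j)_{j\ge 1}$, combined with the Beta distribution of a size-biased pick from a Poisson--Dirichlet and a short telescoping of Gamma factors, yields
\begin{equation*}
\mathbb{E}\Bigl[\sum_i P_i^{p\beta}\Bigr]=\mathbb{E}\Bigl[\sum_m Q_m^{p\beta}\Bigr]\cdot \mathbb{E}\Bigl[\sum_j R_j^{p\beta}\Bigr]=\frac{\beta}{\beta(p+1)-1},
\end{equation*}
which is strictly less than $1$ precisely when $p>1/\beta=\theta/(\theta-1)$. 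Such $p\ge 1$ always exists, so Theorem~\ref{constr2} applies and produces a unique attractive fixpoint $(\check{\mathcal T},\check{\mu})$ of \eqref{AB1} associated with $\xi$.

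Next I would verify that the stable tree $(\mathcal T_\theta,\mu_\theta)$ of index $\theta$ itself solves this fixpoint equation. I would pick a $\mu_\theta$-distributed random leaf $\Sigma$, consider the spine $[[\rho,\Sigma]]$, and decompose $(\mathcal T_\theta,\mu_\theta)$ along it. The spinal decomposition of stable trees (Duquesne--Le~Gall, Haas--Pitman--Winkel, Miermont) states that the branch-point positions on $[[\rho,\Sigma]]$ together with the total fringe-subtree masses pooled at each branch point form a $(\beta,\beta)$-string of beads, while at each branch point the individual fringe-subtree mass sequence is $\mathrm{PD}(1-\beta,-\beta)$-distributed and independent across branch points. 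This is precisely the structure of a $\beta$-generalised string in Definition~\ref{introdef} (reducing, when $\theta=2$, to $\mathrm{PD}(1/2,-1/2)$ concentrated on a single atom, consistent with the binary Brownian CRT). Self-similarity of the stable tree with index $\beta$~\cite{33,34} then asserts that, conditionally on these masses, the fringe subtrees are independent isometric copies of $(\mathcal T_\theta,\mu_\theta)$ with metric rescaled by the $\beta$-power of the mass and measure rescaled by the mass. Hence $(\mathcal T_\theta,\mu_\theta)\stackrel{d}{=}\phi_\beta(\xi,\mathcal T_i,i\ge 1)$ with i.i.d.\ $\mathcal T_i\stackrel{d}{=}(\mathcal T_\theta,\mu_\theta)$, so $(\mathcal T_\theta,\mu_\theta)$ solves \eqref{AB1}; by the uniqueness from Step~1 it coincides in distribution with $(\check{\mathcal T},\check{\mu})$, and attractivity is inherited from Theorem~\ref{constr2}.

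The main obstacle will be assembling the spinal decomposition in exactly the form demanded by Definition~\ref{introdef}: the $(\beta,\beta)$-string-of-beads structure along the spine, the $\mathrm{PD}(1-\beta,-\beta)$ sub-splits at (typically infinite-degree) branch points, and the conditional independence of properly rescaled copies of the stable tree at each attachment site must be combined into a single coherent statement with the correct joint scaling of distances (by the $\beta$-power of the mass) and of measures. The individual ingredients are available in the literature on spine decompositions and self-similar fragmentations, but care is needed to reconcile normalisations. The PD moment computation in Step~1 and the subsequent black-box invocation of Theorem~\ref{constr2} to obtain uniqueness and attractivity are routine by comparison.
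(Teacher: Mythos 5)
Your proposal is essentially the paper's argument: apply the contraction-based fixpoint theorem to a $(1-1/\theta)$-generalised string, then identify the fixpoint with the stable tree via the spinal decomposition. The paper derives the $\beta$-generalised string description of the spine from Corollary~10 of Haas--Pitman--Winkel (the fine spinal partition is ${\rm PD}(1/\theta,1-1/\theta)$, the coarse one is a $(\theta-1,\theta-1)$-coagulation sampled from ${\rm PD}(1-1/\theta,1-1/\theta)$) combined with fragmentation--coagulation duality, and then invokes the spinal decomposition theorem together with uniqueness from Theorem~\ref{uniquefix} --- which is precisely the outline you give. Two small remarks: (a) your explicit Poisson--Dirichlet moment computation $\mathbb{E}[\sum_i P_i^{p\beta}]=\beta/(\beta(p+1)-1)$ is correct but is more than the paper needs, since for any generalised string with $\Lambda=0$ the bound $\mathbb{E}[\sum_i P_i^{p\beta}]<\mathbb{E}[\sum_i P_i]=1$ holds for $p>1/\beta$ by the elementary inequality in equation~\eqref{pbetha}; and (b) the uniqueness and attractivity of the fixpoint come from Theorem~\ref{uniquefix} (equivalently Corollary~\ref{fixpoint}) via the contraction argument, not from Theorem~\ref{constr2}, which only asserts almost sure GHP convergence of the recursively constructed trees --- you should cite the former when you ``black-box'' the uniqueness step. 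The ``main obstacle'' you flag, reconciling the spinal decomposition normalisations, is indeed where the substance lies, and the paper handles it via the coagulation duality rather than citing the split directly, but the ingredients are the same.
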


In particular, this establishes that the Brownian CRT is the unique attractive fixpoint of \eqref{AB1} in the case of a $(1/2,1/2)$-string of beads $\xi$. See also recent work by Albenque and Goldschmidt \cite{50}, who proved that the Brownian CRT is the unique attractive fixpoint of a different distribution equation obtained by joining three i.i.d. weighted $\mathbb R$-trees at randomly chosen vertices (sampled from the respective mass measure), scaled by the parts of an independent Dirichlet($1/2, 1/2, 1/2)$ split. 

If we sacrifice the limiting weight measure $\check{\mu}$ on $\check{\mathcal T}$, we can obtain the existence of a unique distributional fixpoint of \eqref{AB1} and the convergence of the trees $\check{\mathcal T}_n$ constructed as in Theorem \ref{constr2} for yet more general $\xi$, where the random interval $[0,L]$ equipped with a sequence of masses
$P_i$, $i\geq 1$, in not necessarily distinct locations $X_i\in[0,L]$, may have $\sum P_i>1$, even $\sum P_i=\infty$, as long as the $P_i$ decrease \enquote{fast enough}. We could, of course, include a measure $\Lambda$ on $[0,L]$, but its only purpose in Theorem \ref{constr2} was to provide mass on branches for $\check{\mu}$, and $\check{\mu}$ will no longer exist in 
this generality. Let us now state this our most general fixpoint theorem, which holds in the subspace $\mathcal P_p\subset\mathcal P(\mathbb T)$ of distributions of random trees whose height 
${\rm ht}(\mathcal T)=\sup_{x\in\mathcal T}d(\rho,x)$ has finite $p$th moment. We equip $\mathcal P_p$ with the Wasserstein distance $W_p$. See the end of Section \ref{rtp} for details.

\begin{theorem}[Fixpoint] \label{uniquefix} Let $\beta \in (0,\infty)$, $p\ge 1$, and let $\xi=([0,L], (X_i)_{i \geq 1}, (P_i)_{i \geq 1})$ be such that $0\le X_i\le L$, $P_i\ge 0$, $i\ge 1$, $\mathbb E[L^p]<\infty$ and $\mathbb E[\sum_{j \geq 1}P_j^{p\beta}] < 1$. Then the distributional equation \eqref{AB1} associated with $\xi$ has a unique attractive fixpoint in 
  $(\mathcal P_p,W_p)$.
\end{theorem}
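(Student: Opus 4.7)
The plan is to apply the Banach fixed point theorem to the map $\Phi_\beta$ on $(\mathcal P_p, W_p)$. To this end I would establish three facts in sequence: (i) that $(\mathcal P_p, W_p)$ is a complete metric space, (ii) that $\Phi_\beta$ maps $\mathcal P_p$ into itself, and (iii) that $\Phi_\beta$ is a strict contraction with Lipschitz constant $c := (\mathbb E[\sum_{j\ge 1} P_j^{p\beta}])^{1/p} < 1$. Uniqueness, existence, and attractiveness then follow at once from Banach's theorem. Completeness of $(\mathcal P_p, W_p)$ is standard Wasserstein theory since $(\mathbb T, d_{\rm GH})$ is Polish and one takes the pointed distance $d_{\rm GH}(\mathcal T, \{\rho\}) = {\rm ht}(\mathcal T)$ as the reference, so $\mathcal P_p$ consists of those laws with $\mathbb E[{\rm ht}(\mathcal T)^p] < \infty$.

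For the mapping property, I would observe that if $\mathcal T = \phi_\beta(\xi, \mathcal T_i, i\ge 1)$ with the $\mathcal T_i$ i.i.d.~of law $\nu \in \mathcal P_p$ independent of $\xi$, then
\begin{equation*}
{\rm ht}(\mathcal T) \le L + \sup_{i\ge 1} P_i^\beta\, {\rm ht}(\mathcal T_i).
\end{equation*}
Raising to the $p$th power, using $\sup^p \le \sum^p$, and conditioning on $\xi$,
\begin{equation*}
\mathbb E\bigl[{\rm ht}(\mathcal T)^p\bigr]^{1/p} \le \mathbb E[L^p]^{1/p} + \Bigl(\mathbb E\Bigl[\sum_{i\ge 1} P_i^{p\beta}\Bigr]\Bigr)^{1/p}\mathbb E[{\rm ht}(\mathcal T_1)^p]^{1/p},
\end{equation*}
which is finite by hypothesis, so $\Phi_\beta(\nu) \in \mathcal P_p$.

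The heart of the argument is the contraction estimate. Given $\nu_1, \nu_2 \in \mathcal P_p$, for each $i\ge 1$ take an optimal $W_p$-coupling $(\mathcal T_i^{(1)}, \mathcal T_i^{(2)})$ of $(\nu_1, \nu_2)$, with these couplings independent across $i$ and independent of $\xi$; optimal couplings exist by the standard Polish/Wasserstein theory. Set $\mathcal T^{(k)} := \phi_\beta(\xi, \mathcal T_i^{(k)}, i\ge 1)$ for $k=1,2$, so that $\mathcal T^{(k)} \sim \Phi_\beta(\nu_k)$. The key geometric fact is the correspondence bound
\begin{equation*}
d_{\rm GH}\bigl(\mathcal T^{(1)}, \mathcal T^{(2)}\bigr) \le \sup_{i\ge 1} P_i^\beta\, d_{\rm GH}\bigl(\mathcal T_i^{(1)}, \mathcal T_i^{(2)}\bigr),
\end{equation*}
proved by concatenating the identity correspondence on the common spine $[0,L]$ with near-optimal correspondences $R_i$ on the pairs $(\mathcal T_i^{(1)}, \mathcal T_i^{(2)})$ rescaled by $P_i^\beta$; the distortion of the resulting correspondence on $(\mathcal T^{(1)}, \mathcal T^{(2)})$ is the supremum of the distortions of the $R_i$ rescaled. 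Passing to $p$th powers via $\sup^p \le \sum$ and taking expectations, independence yields
\begin{equation*}
\mathbb E\bigl[d_{\rm GH}(\mathcal T^{(1)}, \mathcal T^{(2)})^p\bigr] \le \mathbb E\Bigl[\sum_{i\ge 1} P_i^{p\beta}\Bigr] \cdot W_p(\nu_1, \nu_2)^p,
\end{equation*}
which gives $W_p(\Phi_\beta(\nu_1), \Phi_\beta(\nu_2)) \le c\, W_p(\nu_1, \nu_2)$ with $c < 1$.

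The main technical obstacle I anticipate is the correspondence construction underlying the GH bound, since $\phi_\beta$ is only defined pathwise via \eqref{AB1} on a formal state space of strings of beads and trees; one must carefully verify that independent optimal couplings of $(\mathcal T_i^{(1)}, \mathcal T_i^{(2)})$ can be realised jointly in a measurable way (so that $\mathcal T^{(1)}, \mathcal T^{(2)}$ are genuine random elements of $\mathbb T$), and that the supremum $\sup_i P_i^\beta d_{\rm GH}(\mathcal T_i^{(1)}, \mathcal T_i^{(2)})$ really upper-bounds the GH distance even when infinitely many branches are attached and distortions accumulate. Granted the key bound, the rest is the soft application of Banach's theorem in $(\mathcal P_p, W_p)$; attractiveness amounts to $\Phi_\beta^n(\nu_0) \to $ the fixpoint in $W_p$ for every $\nu_0 \in \mathcal P_p$, which is immediate from the contraction.
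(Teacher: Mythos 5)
Your proposal is correct and follows essentially the same route as the paper: establish completeness of $(\mathcal P_p,W_p)$, verify $\Phi_\beta(\mathcal P_p)\subset\mathcal P_p$ via the height bound ${\rm ht}(\mathcal T)\le L+\sup_i P_i^\beta{\rm ht}(\mathcal T_i)$, prove strict contraction with Lipschitz constant $(\mathbb E[\sum_i P_i^{p\beta}])^{1/p}$ by coupling with the same $\xi$ and independent near-optimal $W_p$-couplings of each child pair, then invoke Banach's fixed point theorem. The technical points you flag (measurability/joint realisability of the couplings, and the Gromov--Hausdorff bound $d_{\rm GH}(\phi_\beta(\xi,\tau_i),\phi_\beta(\xi,\tau_i'))\le\sup_i P_i^\beta d_{\rm GH}(\tau_i,\tau_i')$) are exactly the ones the paper handles in advance via Propositions \ref{sep} and \ref{meas} and the explicit formula \eqref{attach2} for the grafted metric.
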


The following is a corresponding recursive construction of the fixpoint, as a Gromov-Hausdorff limit.

\begin{theorem}[Recursive construction]\label{constr3} In the setting of Theorem \ref{uniquefix}, using notation of Theorem \ref{constr2}, 
  $$\lim \limits_{n \rightarrow \infty} \check{\mathcal T}_n = \check{\mathcal T} \text{ a.s. in the Gromov-Hausdorff topology on $\mathbb T$}$$ 
  for some random compact $\mathbb R$-tree $\check{\mathcal T}$.
\end{theorem}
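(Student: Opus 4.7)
My plan is to couple the entire sequence $(\check{\mathcal T}_n)_{n\ge 0}$ on a single probability space so that each $\check{\mathcal T}_n$ embeds isometrically as a sub-$\mathbb R$-tree of $\check{\mathcal T}_{n+1}$. This is the natural coupling in which, to pass from level $n$ to level $n+1$, one glues at every atom $\check X_i^{(n)}$ a fresh independent isometric copy $\xi^{(n,i)}=([0,L^{(n,i)}],(X_j^{(n,i)})_j,(P_j^{(n,i)})_j)$ of $\xi$ with lengths scaled by $(\check P_i^{(n)})^\beta$. Under this nested coupling the Gromov-Hausdorff distance is dominated by the one-sided Hausdorff distance, which is just the supremum of the heights of the newly attached pieces:
\[ D_n := d_{\mathrm{GH}}(\check{\mathcal T}_n,\check{\mathcal T}_{n+1}) \;\le\; \sup_{i\ge 1} (\check P_i^{(n)})^{\beta} L^{(n,i)}. \]

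I would then bound $\|D_n\|_p$ geometrically. The lengths $L^{(n,i)}$ are i.i.d.\ copies of $L$ independent of $(\check P_i^{(n)})_{i\ge 1}$ (which depends on data from levels $0,\ldots,n-1$ only), so using $\sup\le(\sum(\cdot)^p)^{1/p}$ and conditioning,
\[ \mathbb E[D_n^p] \;\le\; \mathbb E[L^p]\,\mathbb E\!\left[\sum_{i\ge 1}(\check P_i^{(n)})^{p\beta}\right]. \]
Since each atom mass factors as $\check P_{i'}^{(n+1)}=\check P_i^{(n)}P_j^{(n,i)}$, an induction on $n$ using the independence of the $\xi^{(n,i)}$ gives
\[ \mathbb E\!\left[\sum_{i\ge 1}(\check P_i^{(n)})^{p\beta}\right] \;=\; q^{n+1}, \qquad q := \mathbb E\!\left[\sum_{j\ge 1} P_j^{p\beta}\right] \in [0,1). \]
Hence $\mathbb E[D_n^p]^{1/p}$ decays geometrically, and by Minkowski $\sum_n D_n$ has finite $p$th moment, so is a.s.\ finite.

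It follows that $(\check{\mathcal T}_n)$ is a.s.\ Cauchy in $(\mathbb T,d_{\mathrm{GH}})$. Each $\check{\mathcal T}_n$ is itself compact by an inductive argument: given that $\check{\mathcal T}_{n-1}$ is compact, only finitely many of the pieces attached at level $n-1$ exceed any fixed length $\varepsilon>0$, since $\sum_i(\check P_i^{(n-1)})^{p\beta}(L^{(n-1,i)})^p<\infty$ a.s. As the space of isometry classes of compact $\mathbb R$-trees is complete under $d_{\mathrm{GH}}$, there exists a.s.\ a random compact $\mathbb R$-tree $\check{\mathcal T}$ with $\check{\mathcal T}_n\to\check{\mathcal T}$ a.s.

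The main obstacle I anticipate is the careful bookkeeping behind the nested coupling: realising the entire sequence on a single probability space so that $\check{\mathcal T}_n\subset\check{\mathcal T}_{n+1}$ holds literally, and so that the labels $(\check X_i^{(n)},\check P_i^{(n)})$ together with the fresh copies $\xi^{(n,i)}$ carry the correct independence structure needed in the moment computation. Identification of the law of $\check{\mathcal T}$ with the unique attractive $W_p$-fixpoint of Theorem~\ref{uniquefix} is not demanded by the statement, but would follow directly from the uniform $L^p$-bound above, which upgrades a.s.\ GH convergence to $W_p$ convergence in $\mathcal P_p$.
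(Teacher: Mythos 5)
Your proof is correct, and it takes a mildly different (and arguably more streamlined) route than the paper's. Both use the same nested coupling in which $\check{\mathcal T}_n\subset\check{\mathcal T}_{n+1}$. The paper bounds the $m$-to-$n$ distance $d_{\rm GH}(\check{\mathcal T}_m,\check{\mathcal T}_n)$ in one go by the supremum of $\check P_{\mathbf{i}j}^\beta\,{\rm ht}(\tau_{\mathbf{i}j}^{(n)})$ over level-$(m\!+\!1)$ subtrees, which forces it to first establish a uniform bound $M:=\sup_n\mathbb E[{\rm ht}(\check{\mathcal T}_n)^p]<\infty$ via the Wasserstein convergence $\eta_n\to\eta^*$ of Corollary~\ref{fixpoint}; it then concludes via Markov's inequality, monotonicity of $d_{\rm GH}(\check{\mathcal T}_m,\check{\mathcal T}_n)$ in $n$, and Borel--Cantelli. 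You instead telescope through the one-step increments $D_n=d_{\rm GH}(\check{\mathcal T}_n,\check{\mathcal T}_{n+1})$, which are dominated by the heights of the \emph{freshly attached single intervals} only, so the bound $\mathbb E[D_n^p]\le\mathbb E[L^p]\,q^{n+1}$ uses nothing beyond the raw hypotheses $\mathbb E[L^p]<\infty$ and $q=\mathbb E[\sum_j P_j^{p\beta}]<1$; Minkowski then gives $\sum_n D_n\in L^p$, hence a.s.\ Cauchy. This buys you independence from Corollary~\ref{fixpoint} and avoids the uniform height-moment bound $M$ entirely, at essentially the same cost.

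Two small points worth making explicit if you wrote this up. First, the factorisation identity $\mathbb E[\sum_i(\check P_i^{(n)})^{p\beta}]=q^{n+1}$ is exactly the paper's \eqref{PDsplit} applied recursively; it relies on the $\xi^{(n,i)}$ being i.i.d.\ copies of $\xi$ independent of the level-$n$ data, which is precisely the RTF independence structure, so your ``bookkeeping'' worry is resolved by simply constructing the nested representatives from the RTF $(\xi_{\mathbf i},\mathbf i\in\mathbb U)$ as in \eqref{taucon} or, more explicitly, in $l^1(\mathbb U)$ as in Corollary~\ref{recconcrt2}. Second, your inductive compactness argument for $\check{\mathcal T}_n$ is really a verification that $\phi_\beta$ never degenerates to the point tree, i.e.\ that a.s.\ $(\xi_{\mathbf i},\tau_{\mathbf{i}j}^{(n)},j\ge1)\in C_\beta$; the a.s.\ convergence of $\sum_i(\check P_i^{(n-1)})^{p\beta}(L^{(n-1,i)})^p$ already forces the attached lengths to tend to $0$, which is exactly membership in $C_\beta$. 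Finally, your closing remark that the uniform $L^p$ control upgrades a.s.\ GH convergence to $W_p$ convergence is right, though identifying the limit law with $\eta^*$ still requires Corollary~\ref{fixpoint} (applied to the distributions $\eta_n=\Phi_\beta^n(\eta_0)$), not merely the moment bound; this is, however, not demanded by the statement at hand.
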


In this introduction we focussed on the use of fixpoint equations and recursive constructions to obtain large classes of CRTs, some of which are well-known in other
contexts, but many of which are new, and we mentioned related work on tree growth processes and fixpoint characterisations. \smallskip

\noindent \bf Applications and examples \rm of our results include the following, which we present in Section \ref{appl}. 
\begin{itemize} 
  \item We demonstrate how our constructions include new constructions of the self-similar trees of \cite{33,34}, whose existence was established there 
    using different methods. 
  \item We give, for the first time, constructions of the genealogical trees associated with Bertoin's self-similar growth fragmentations, including those related to the Brownian map \cite{54,55}. 
  \item Our methods establish moments for the height of the fixpoint tree, which corresponds to the extinction time in the context of growth fragmentations. In \cite{54},
    moment results were obtained only in the spectrally positive case. Our methods work more generally and notably include a one-parameter 
    class studied in \cite{54} as an extension of the growth fragmentation relating to the Brownian map.
  \item We obtain general Hausdorff dimension results for trees $\check{\mathcal{T}}$ of Theorem \ref{constr3}. 
  \item We construct a specific binary CRT, which we apply in forthcoming work \cite{forth} as an example of an embedding problem for CRTs, providing a binary embedding
    of the stable line-breaking construction, which will solve an open problem of Goldschmidt and Haas \cite{12}.
\end{itemize}

This article is organised as follows. In Section \ref{Sec3} we give an introduction to recursive distribution equations, recursive tree frameworks and $\mathbb R$-trees. Section \ref{Sec4} turns to the existence of random $\mathbb R$-trees as distributional fixpoints and their recursive constructions, including the proofs of most of the results presented in this introduction. In Section \ref{appl} we present examples and applications of our method.

\section{Preliminaries} \label{Sec3}

\subsection{Recursive distribution equations and recursive tree frameworks}

We briefly review the concept of a \textit{recursive tree framework} (RTF) as presented by Aldous and Bandyopadhyay \cite{14}, where general recursive distributional equations were studied with regard to the existence of fixpoints. Such recursive relationships arise in a variety of contexts, e.g. in algorithmic structures, Galton-Watson branching processes and combinatorial random tree structures. We will use the recursive distributional relations underlying an RTF to give a recursive construction of CRTs based on random strings of beads. While our notation is suggestive, the generality of \cite{14} is as stated here.

Consider two measurable spaces $(\mathbb T,  \mathcal A_{\mathbb T})$ and $(\Xi, \mathcal A_{\Xi})$, and the product space
\begin{equation} \Xi^*:=\Xi \times \bigcup_{0 \leq m \leq \infty} \mathbb T^m, \end{equation}
where $\mathbb T^m$ denotes the space of $\mathbb T$-valued sequences of length $m$, $0 \leq  m \leq \infty$, including $\bT^0:=\{\Upsilon\}$, where $\Upsilon$ stands
for the empty sequence. Furthermore, consider a measurable map \begin{equation} \phi: \Xi^* \rightarrow  \mathbb T, \label{map1} \end{equation}  and random variables $(\xi, N) \in \Xi \times \overline{\mathbb N}:=\Xi \times \{1, 2, \ldots; \infty\}$, and $(\tau_i, i \geq 1) \in \mathbb T^\infty$ as follows.
\begin{itemize}
\item[(i)] The pair $(\xi, N)$ has some distribution $\nu$, i.e. $(\xi, N) \sim \nu$.
\item[(ii)] The sequence $(\tau_i, i \geq 1)$ is i.i.d. with some distribution $\eta$, i.e. $\tau_i \sim \eta$, $i\ge 1$.
\item[(iii)] The random variables in (i) and (ii) are independent.
\end{itemize}

We denote by $\mathcal P(\mathbb T)$ the set of probability measures on the space $(\mathbb T,\mathcal A_{\mathbb T})$. For any given distribution $\nu$ on $\Xi \times \overline{\mathbb N}$, we then obtain a map
\begin{equation} \Phi: \mathcal P(\mathbb T) \rightarrow \mathcal P(\mathbb T), \quad \eta \mapsto \Phi(\eta), \label{phi1} \end{equation}
where $\Phi(\eta)$ is defined as the distribution of
\begin{equation} \tau:=\phi \left(\xi, \tau_i, 1 \leq i \leq N\right). \label{relranvar} \end{equation}
We call $\tau$ the \textit{parent} value of $(\tau_i, 1 \leq i \leq N)$, and refer to $(\tau_i, 1 \leq i \leq N)$, as the values of the \textit{children} of $\tau$. We now view the random variables $\tau_i$ as the parent values of random variables $\tau_{ij}, 1 \leq j \leq N_i$, associated with $\xi_i$, i.e. 
\begin{equation*}
\tau_{i}=\phi\left(\xi_i, \tau_{ij}, 1 \leq j \leq N_i\right), \quad 1 \leq i \leq N.
\end{equation*}
This setting can be extended recursively to each of the following generations, i.e. each child is considered as a parent itself. We refer to this setting as a \textit{recursive tree framework}. More precisely, we define 
\begin{equation*} \mathbb U:=\bigcup_{n \geq 0} \mathbb N^n \end{equation*} 
using the Ulam-Harris notation to describe the set of all possible descendants $\mathbf i$, where $\mathbf i=i_1 i_2\cdots i_n\in\mathbb N^n$ denotes an individual in    generation $n \geq 1$, which is the $i_n$-th child of the parent $i_1i_2 \cdots i_{n-1}\in\mathbb N^{n-1}$. Note that in this terminology $\left\{\tau_{\textbf i}: \text{{\textbf i} in generation } n\right\}=\left\{\tau_{\textbf i}: {\textbf i} \in \mathbb N^n \right\}$. If we let the empty vector $\varnothing$ (the only element of $\mathbb N^0$) be the root of $\mathbb U$ and link parents and children via edges, the set $\mathbb U$ can be viewed as an infinite (discrete) tree. 

\begin{definition}[Recursive tree framework]  A pair $((\xi_{\textbf i}, N_{\textbf i}, \textbf i \in \mathbb U)   ; \phi)$ where $(\xi_{\textbf i}, N_{\textbf i}, \textbf i \in \mathbb U)$ is a sequence of i.i.d. $\Xi \times \overline{\mathbb N}$-valued random variables $(\xi_{\textbf i}, N_{\textbf i}) \sim \nu$, $\textbf i \in \mathbb U$, and $\phi:  \Xi^* \rightarrow \mathbb T$ is a measurable map is called a \textit{recursive tree framework} (RTF).  \end{definition}

Suppose there are random variables $\tau_{\textbf i}, {\textbf i} \in \mathbb U$, possibly on an extended probability space, as follows.
\begin{itemize}
\item[(i)] For all $\textbf{i} \in \mathbb U$, 
\begin{equation} \tau_{\textbf i}=\phi\left(\xi_{\textbf i}, \tau_{{\textbf i}j}, 1 \leq j \leq N_{\textbf i}\right) \quad \text{ a.s..} \label{45} \end{equation}
\item[(ii)] The random variables \begin{equation} \left\{\tau_{\textbf i}: {\text{{\textbf i} in generation }} n\right\} \label{46} \end{equation} are i.i.d. with some distribution $\eta_n$ on $(\mathbb T,\mathcal A_{\mathbb T})$.
\item[(iii)] The random variables $\left\{\tau_{\textbf i}: {\text{{\textbf i} in generation } n}\right\}$ are independent of the random variables \begin{equation}  \left\{(\xi_{\textbf i}, N_{\textbf i}): \text{{\textbf i} in generations $0,\ldots,n-1$}\right\}. \end{equation}. \end{itemize} \vspace{-0.6cm}

A \textit{recursive tree process} (RTP) is a recursive tree framework with random variables $\tau_{\textbf i}, \textbf{i} \in \mathbb U$, as in \eqref{45}-\eqref{46}, i.e. an RTP is an RTF enriched by the random variables $\tau_{\textbf i}, \textbf{i} \in \mathbb U$. An RTP with random variables $\tau_{\textbf i}$ only defined up to generation $n$, i.e. only for ${\textbf i} \in \bigcup_{m=0}^n \mathbb N^m$, is called an RTP \textit{of depth n}. While RTPs of depth $n$ can always be defined using \eqref{46} for any distribution $\eta_n$ and $\eqref{45}$ for generations $n-1, \ldots, 0$, RTPs of infinite depth do not exist in general. We refer to \cite[Section 2.3]{14} for more details on RTFs and RTPs, in particular with regard to connections to Markov chains and Markov transition kernels. 

In what follows, consider a fixed recursive tree framework, i.e. let $(\xi_{\textbf i}, N_{\textbf i})$, $\textbf i \in \mathbb U$, be i.i.d. with distribution $\nu$, and let $\phi: \Xi^* \rightarrow \mathbb T$ be a measurable map. Given $n \geq 1$ and an arbitrary distribution $\eta_n$ on $\mathbb T$, we consider a recursive tree process where the values of $n$-th generation individuals are i.i.d. with distribution $\eta_n$. The distributions of the values of $j$-th generation individuals are then given by
\begin{equation} \tau_{\textbf i} \sim \eta_j:=\Phi^{n-j}(\eta_n), \quad \textbf{i} \in \mathbb N^{j}, \end{equation}
for $1 \leq j \leq n-1$. Aldous and Bandyopadhyay \cite{14} studied fixpoints of $\Phi$. Note that the existence of a fixpoint $\eta^*$ of $\Phi$ ensures the existence of a stationary RTP, i.e. an RTP with $\eta_n=\eta$, $n \geq 0$, by Kolmogorov's consistency theorem.

\begin{lemma}[The contraction method, \cite{14} Lemma 5] \label{contrac}
Let $\mathcal P \subset \mathcal P(\mathbb T)$ such that $\Phi(\mathcal P) \subset \mathcal P$, i.e. consider $\Phi \colon \mathcal P \rightarrow \mathcal P$ as in \eqref{phi1}-\eqref{relranvar} related to a recursive tree process as above. Furthermore, let $d_{\mathcal P}$ be a complete metric on $\mathcal P$ such that the contraction property holds, i.e.
\begin{equation} \sup_{\eta, \eta' \in \mathcal P, \eta \neq \eta'} \frac{d_{\mathcal P}\left(\Phi(\eta), \Phi(\eta')\right)}{d_{\mathcal P}\left(\eta, \eta'\right)}< 1. \end{equation}
Then the map $\Phi$ has a unique fixpoint $\eta^* \in \mathcal P$, and the domain of attraction of $\eta^*$ is the whole set $\mathcal P$, i.e. for all $\eta\in\mathcal{P}$, we have $\displaystyle\lim_{j\rightarrow\infty}\Phi^j(\eta)=\eta^*$ in $(\mathcal P,d_{\mathcal P})$.
\end{lemma}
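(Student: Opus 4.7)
The plan is to recognise this as a direct instance of the Banach fixed-point theorem applied to $\Phi\colon\mathcal P\to\mathcal P$ on the complete metric space $(\mathcal P,d_{\mathcal P})$. First I would set
\[
c:=\sup_{\eta,\eta'\in\mathcal P,\,\eta\ne\eta'}\frac{d_{\mathcal P}(\Phi(\eta),\Phi(\eta'))}{d_{\mathcal P}(\eta,\eta')},
\]
which by hypothesis is strictly less than $1$. By definition of the supremum this gives the uniform Lipschitz bound $d_{\mathcal P}(\Phi(\eta),\Phi(\eta'))\le c\,d_{\mathcal P}(\eta,\eta')$ for all $\eta,\eta'\in\mathcal P$; in particular $\Phi$ is continuous.

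Next I would fix an arbitrary $\eta_0\in\mathcal P$, set $\eta_n:=\Phi^n(\eta_0)$, and show that $(\eta_n)_{n\ge 0}$ is Cauchy. Iterating the contraction bound yields $d_{\mathcal P}(\eta_n,\eta_{n+1})\le c^n d_{\mathcal P}(\eta_0,\eta_1)$, and then the triangle inequality gives, for $m>n$,
\[
d_{\mathcal P}(\eta_n,\eta_m)\le \sum_{k=n}^{m-1}c^k\,d_{\mathcal P}(\eta_0,\eta_1)\le \frac{c^n}{1-c}d_{\mathcal P}(\eta_0,\eta_1),
\]
which tends to $0$ as $n\to\infty$. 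Completeness of $(\mathcal P,d_{\mathcal P})$ produces a limit $\eta^*\in\mathcal P$, and passing to the limit in $\eta_{n+1}=\Phi(\eta_n)$ using continuity of $\Phi$ shows $\eta^*=\Phi(\eta^*)$.

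For uniqueness, any second fixpoint $\eta^{**}\in\mathcal P$ would satisfy $d_{\mathcal P}(\eta^*,\eta^{**})=d_{\mathcal P}(\Phi(\eta^*),\Phi(\eta^{**}))\le c\,d_{\mathcal P}(\eta^*,\eta^{**})$ with $c<1$, forcing $\eta^{**}=\eta^*$. For the global domain of attraction, I would reuse the contraction bound on the pair $(\Phi^n(\eta_0),\Phi^n(\eta^*))$ to get $d_{\mathcal P}(\Phi^n(\eta_0),\eta^*)\le c^n d_{\mathcal P}(\eta_0,\eta^*)\to 0$ for every starting point $\eta_0\in\mathcal P$.

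There is no substantive obstacle: the statement is the classical Banach fixed-point theorem in the notation of recursive tree frameworks, and the whole argument reduces to iterating the Lipschitz bound and invoking completeness. The only point that genuinely requires any care is the initial conversion from the sup-hypothesis to a uniform Lipschitz bound valid at every pair, but this is immediate from the definition of the supremum, so the real work in applications of this lemma will lie not here but in exhibiting, for a given $\Phi$, a subset $\mathcal P$ and metric $d_{\mathcal P}$ for which the contraction hypothesis can be verified.
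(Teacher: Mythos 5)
Your proof is correct and is precisely the standard Banach fixed-point argument; the paper cites this lemma from Aldous and Bandyopadhyay rather than reproving it, and when invoking it later (Corollary \ref{fixpoint}) explicitly remarks that one could ``conclude by Lemma \ref{contrac}, or directly by Banach's fixpoint theorem,'' so your proposal coincides with the intended argument.
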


\subsection{Weighted $\mathbb R$-trees and the Gromov-Hausdorff-Prokhorov topology} \label{Sec31}

We use the notion of an {$\mathbb R$-tree}, that is, a separable metric space $(T, d)$ such that the following two properties hold for every $\sigma_1, \sigma_2 \in T$.
\begin{enumerate}
\item[(i)] There is an isometric map $h_{\sigma_1, \sigma_2} \colon [0, d(\sigma_1, \sigma_2)] \rightarrow T$ such that \begin{equation*} h_{\sigma_1, \sigma_2}(0)=\sigma_1 \text{ and } h_{\sigma_1, \sigma_2}(d(\sigma_1, \sigma_2))=\sigma_2. \end{equation*}
\item[(ii)] For every injective path $q \colon [0,1] \rightarrow T$ with $q(0)=\sigma_1$ and $q(1)=\sigma_2$ we have\begin{equation*} q([0,1])=h_{\sigma_1, \sigma_2}([0,d(\sigma_1, \sigma_2)]). \end{equation*}
\end{enumerate}
We write $[[\sigma_1, \sigma_2]]:=h_{\sigma_1, \sigma_2}\left([0,d(\sigma_1, \sigma_2\right)])$ for the range of $h_{\sigma_1, \sigma_2}$. The trees considered in this paper are usually compact, but we also allow non-compact $\mathbb R$-trees. A \textit{rooted} $\mathbb R$-tree $(T, d, \rho)$ is an $\mathbb{R}$-tree $(T, d)$ with a distinguished element $\rho\in T$, the \textit{root}. We only consider rooted $\mathbb{R}$-trees, and will often refer to $T$ as an $\mathbb R$-tree without mentioning the distance $d$ and the root $\rho$ explicitly. For any $c > 0$ and any metric space $(T, d)$, we write $c T$ for $(T, c d)$, the metric space obtained when all distances are multiplied by $c$.

We are only interested in equivalence classes of rooted $\mathbb R$-trees. Two rooted $\mathbb R$-trees $(T, d, \rho)$ and $(T', d', \rho')$ are \textit{equivalent} if there exists an isometry from $T$ onto $T'$ such that $\rho$ is mapped to $\rho'$.  The set of equivalence classes of compact rooted $\mathbb R$-trees is denoted by $\mathbb T$.

We follow \cite{21} and equip $\mathbb T$ with the (pointed) \textit{Gromov-Hausdorff distance} $d_{\rm GH}$. For rooted $\mathbb{R}$-trees $(T, d, \rho)$, $(T',d', \rho')$
we define  \begin{equation}  d_{\rm GH} \left(\left(T, d,\rho\right), \left(T', d',\rho'\right)\right) := \inf \limits_{\varphi, \varphi'} \left\{\max \left\{\delta \left(\varphi\left(\rho\right), \varphi'\left(\rho'\right)\right), \delta_{\rm H}\left(\varphi \left(T\right), \varphi'\left(T'\right)\right)\right\}\right\}, \label{GHdist} \end{equation}
where $\delta_{\rm H}$ is the Hausdorff distance between compact subsets of $(\mathcal M, \delta)$, and the infimum is taken over all metric spaces $(\mathcal M, \delta)$ and all isometric embeddings $\varphi\colon T \rightarrow \mathcal M$, $\varphi' \colon T' \rightarrow \mathcal M$ into $(\mathcal M, \delta)$. The Gromov-Hausdorff distance only depends on the equivalence classes of $(T, d, \rho)$, $(T',d', \rho')$ and induces a metric on $\bT$, which we also denote by $d_{\rm GH}$. We equip $\mathbb T$ with the associated Borel $\sigma$-algebra $\mathcal B(\mathbb T)$.

A \textit{weighted} $\mathbb R$-tree $(T, d, \rho, \mu)$ is a rooted $\mathbb R$-tree $(T, d, \rho)$ equipped with a probability measure $\mu$ on the Borel sets $\mathcal B(T)$ of $(T,d)$. Two {weighted $\mathbb R$-trees} $(T, d, \rho, \mu)$ and $(T', d', \rho', \mu')$ are \textit{equivalent} if there is an isometry from $(T, d, \rho)$ onto $(T', d', \rho')$ such that $\mu'$ is the push-forward of $\mu$ under this isometry. The set of equivalence classes of weighted compact $\mathbb R$-trees is denoted by $\mathbb{T}_{\rm w}$.

The Gromov-Hausdorff distance can be extended to a distance between weighted $\mathbb R$-trees, the \textit{Gromov-Hausdorff-Prokhorov distance} between two weighted $\mathbb{R}$-trees $(T, d, \rho, \mu)$ and $(T',d', \rho', \mu')$, 
\begin{equation} d_{\rm GHP} \left(\left(T, d,\rho, \mu\right), \left(T', d',\rho', \mu'\right)\right) := \inf \limits_{\varphi, \varphi'} \left\{\max\left\{\delta\left(\varphi\left(\rho\right), \varphi'\left(\rho'\right)\right), \delta_{\rm H}\left(\varphi\left(T\right), \varphi'\left(T'\right)\right), \delta_{\text P}\left(\varphi_*\mu, \varphi'_*\mu'\right) \right\} \right\}, \label{GHPdist} \end{equation}
where $(\mathcal M,\delta), \varphi, \varphi', \delta_{\text H}$ are as in \eqref{GHdist}, $\varphi_*\mu$, $\varphi'_*\mu$ are the push-forwards of $\mu$, $\mu'$ via $\varphi, \varphi'$, respectively, and $\delta_{\text P}$ is the Prokhorov distance on the space of Borel probability measures on $(\mathcal M, \delta)$ given by
\begin{equation*}
\delta_{\rm P}\left(\mu, \mu' \right)=\inf \left\{ \epsilon >0: \mu(D) \leq \mu'( D^\epsilon)+\epsilon \quad \forall D \subset \mathcal M \text{ closed}\right\}
\end{equation*}
where $D^\epsilon:=\{x \in \mathcal M: \inf_{y \in D} \delta(x,y) <\epsilon\}$ is the \textit{$\epsilon$-thickening} of $D\subset\mathcal M$. The Gromov-Hausdorff-Prokhorov distance only depends on the equivalence classes and induces a metric on $\mathbb T_{\rm w}$. 

\begin{prop}[e.g. \cite{20,22,HM12}]\label{sepcompt} The spaces $(\mathbb T,d_{\rm GH})$ and $(\mathbb T_{\rm w}, d_{\rm GHP})$ are separable and complete. \end{prop}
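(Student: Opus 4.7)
My plan is to handle the unweighted case $(\mathbb T, d_{\rm GH})$ first and then adapt the argument to $(\mathbb T_{\rm w}, d_{\rm GHP})$, since the Prokhorov component plugs in on top of a scheme that already controls the underlying metric trees.

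For separability of $(\mathbb T, d_{\rm GH})$, I would exhibit a countable dense set consisting of finite rooted $\mathbb R$-trees with rational edge lengths. Given $(T,d,\rho)\in\mathbb T$ and $\varepsilon>0$, I use compactness to pick a finite $\varepsilon$-net $\{x_0=\rho,x_1,\dots,x_k\}$ in $T$ and take $T^\prime := \bigcup_{i=1}^k [[\rho,x_i]]$, which is a finite $\mathbb R$-tree (the \enquote{reduced tree}) at Hausdorff distance at most $\varepsilon$ from $T$ inside $T$ itself. I then perturb the edge lengths to rationals at cost $O(k\varepsilon)$, producing a representative in the countable dense family. For completeness, given a Cauchy sequence $(T_n)$ in $\mathbb T$, the standard trick is to choose a subsequence so that $d_{\rm GH}(T_{n_k},T_{n_{k+1}})<2^{-k}$, realise the infimum in \eqref{GHdist} up to $2^{-k}$ with isometric embeddings into a common space, and then glue these embeddings into a single Polish space $\mathcal M$ containing isometric copies of all $T_{n_k}$ with $\delta_{\rm H}$-Cauchy images. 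The Hausdorff limit $T_\infty$ in $\mathcal M$ is a compact metric space (precompactness follows from the uniform covering-number bounds implicit in Cauchyness, via Gromov's precompactness criterion). Finally, one verifies that $T_\infty$ is an $\mathbb R$-tree by checking the four-point condition, which is a closed condition under Hausdorff convergence of compact subsets, so it passes to the limit; the distinguished roots $\rho_n$ converge in $\mathcal M$ to a point which serves as $\rho_\infty$.

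For $(\mathbb T_{\rm w}, d_{\rm GHP})$, separability follows by enlarging the countable dense family to finite rational $\mathbb R$-trees equipped with rational-weighted Dirac combs on a finite set of rational-parametrised points. Any weighted compact $\mathbb R$-tree can be approximated first in the Gromov-Hausdorff sense by a reduced finite rational tree as above, and independently the measure $\mu$ can be approximated in Prokhorov distance by a finitely supported, rational-weighted measure (this is the standard density of finitely supported rational measures in $(\mathcal P(T),\delta_{\rm P})$ for compact $T$). The two approximations combine because the three quantities inside the $\max$ in \eqref{GHPdist} can be controlled simultaneously on a single embedding. For completeness, I extract the common embedding as in the unweighted case and, in addition, use that the sequence of push-forward measures $(\varphi_{n,*}\mu_n)$ in $\mathcal M$ is tight (their supports lie in the Hausdorff-convergent, hence totally bounded, set $\bigcup_k\varphi_{n_k}(T_{n_k})$), so Prokhorov's theorem on $\mathcal M$ gives a weak subsequential limit $\mu_\infty$; the Prokhorov-Cauchy condition upgrades this to convergence of the full (sub)sequence to $\mu_\infty$. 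Since the supports of $\varphi_{n_k,*}\mu_{n_k}$ lie in compact sets Hausdorff-converging to $T_\infty$, the support of $\mu_\infty$ sits inside $T_\infty$, yielding a legitimate weighted $\mathbb R$-tree limit.

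The main obstacle I anticipate is the verification that the Gromov-Hausdorff limit of a Cauchy sequence of $\mathbb R$-trees is again an $\mathbb R$-tree rather than a more general geodesic space. I would handle this via the four-point condition $d(x,y)+d(z,w)\le\max\{d(x,z)+d(y,w),d(x,w)+d(y,z)\}$, which characterises $\mathbb R$-trees among geodesic spaces and is manifestly preserved under pointwise (hence Hausdorff) convergence. A secondary nuisance is the interplay between Gromov-Hausdorff convergence of the $T_n$ and Prokhorov convergence of the $\mu_n$: one must realise both on a single embedding, which is why the proof packages the two approximations into the joint minimisation in \eqref{GHPdist} from the outset rather than treating them sequentially.
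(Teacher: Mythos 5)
The paper does not actually prove this proposition; it states it as known and cites \cite{20,22,HM12} (and handles the nearby Proposition~\ref{embprop} the same way). Your proof fills in the standard literature argument and is essentially sound, so there is nothing to compare against in the paper itself. Two points are worth tightening, though. First, you correctly observe that the four-point condition characterises $\mathbb R$-trees \emph{among geodesic spaces}, but you never actually verify that the Gromov--Hausdorff limit $T_\infty$ is geodesic. This is true --- Gromov--Hausdorff limits of compact geodesic (length) spaces are geodesic, see e.g.\ Burago--Burago--Ivanov, Theorem~7.5.1 --- but it is a separate, non-quantitative ingredient and is precisely what prevents the limit from being a more exotic $0$-hyperbolic non-tree; it should be stated rather than implied. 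Second, in the weighted case the detour through Prokhorov's compactness theorem plus a Cauchy upgrade is slightly roundabout: once you have realised all the pushforwards $\varphi_{n_k,*}\mu_{n_k}$ on a single complete separable ambient space $\mathcal M$, the Prokhorov metric on $\mathcal P(\mathcal M)$ is itself complete, so the Prokhorov--Cauchy sequence converges outright and the support containment in $T_\infty$ follows from Hausdorff convergence of the supports; both routes are correct, but the direct one avoids the tightness bookkeeping. The separability argument (finite rational reduced trees, augmented by rational Dirac combs for the measures) and the appeal to Gromov's precompactness criterion to get compactness of $T_\infty$ are fine as written.
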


We will also need some terminology to describe an $\mathbb R$-tree $(T,d, \rho)$.
For any $x \in T$, we call $d(\rho, x)$ the \textit{height of} $x$, ${\rm ht}(T):=\sup_{x \in T}d(\rho,x)$ the \textit{height of} $T$.  A \textit{leaf} is an element $x \in T\setminus \{\rho\}$ such that $T \setminus \{x\}$ is connected and we denote the set of all leaves of $T$ by ${\rm Lf}(T)$. An element $x \in T\setminus \{\rho\}$ is a \textit{branch point} if $T \setminus \{x\}$ has at least three connected components. The \textit{degree} ${{{\rm deg}}}(x, T)$ of a vertex $x \in T$ is the number of connected components of $T \setminus \{x\}$. 

Following Aldous \cite{8,7,6}, we call a weighted $\mathbb R$-tree $(T, d, \rho, \mu)$ a \textit{continuum tree} if the probability measure $\mu$ on $T$ satisfies the following three additional properties.
\begin{enumerate}
\item[(i)] $\mu$ is supported by ${\rm Lf} (T)$, the set of leaves of $T$.
\item[(ii)] $\mu$ has no atom, i.e. for any singleton $x \in {\rm Lf}(T)$ we have $\mu(x):=\mu(\{x\})=0$. 
\item[(iii)] For every $x \in T \setminus {\rm Lf}(T)$, $\mu(T_x)>0$, where $T_x := \{ \sigma \in T: x \in [[\rho, \sigma]]\}$ is the subtree above $x$ in $T$.
\end{enumerate} 

Note that these conditions imply that ${\rm Lf} (T)$ is uncountable and has no isolated points. We refer to \cite{20,25,26} for more details on the topic of $\mathbb R$-trees. 

While some of our developments are more easily stated and/or proved in $(\mathbb T,d_{\rm GH})$ or $(\mathbb T_{\rm w},d_{\rm GHP})$, others benefit from more explicit
embeddings into a particular metric space $(\mathcal{M},\delta)$, which we will always choose as 
$$\mathcal{M}=l^1(\mathbb U):=\left\{(s_{\mathbf i})_{\mathbf{i}\in\mathbb U}\in[0,\infty)^{\mathbb U}\colon\sum_{\mathbf{i}\in\mathbb U}s_{\mathbf i}<\infty\right\}$$
equipped with the metric induced by the $l^1$-norm. Since $\mathbb U$ is countable, this is only a very slight variation of Aldous's \cite{8,7,6} choice $\mathcal{M}=l^1(\mathbb{N})$.
We denote by $\mathbb{T}^{\rm emb}$ the space of all compact $\mathbb{R}$-trees $T\subset l^1(\mathbb{U})$ with root $0\in T$, which we equip with the Hausdorff metric $\delta_{\rm H}$, and by $\mathbb{T}^{\rm emb}_{\rm w}$ the space of all weighted compact $\mathbb{R}$-trees $(T,\mu)$ with $T\in\mathbb{T}^{\rm emb}$, which we equip with the metric
$\delta_{\rm HP}((T,\mu),(T^\prime,\mu^\prime))=\max\{\delta_{\rm H}(T,T^\prime),\delta_{\rm P}(\mu,\mu^\prime)\}$.

\begin{prop}\label{embprop}\begin{enumerate}\item[\rm(i)] $(\mathbb{T}^{\rm emb},\delta_{\rm H})$ and $(\mathbb{T}^{\rm emb}_{\rm w},\delta_{\rm HP})$ are separable and complete.
  \item[\rm(ii)] For all $T,T^\prime\in\mathbb{T}^{\rm emb}$ we have $d_{\rm GH}(T,T^\prime)\le\delta_{\rm H}(T,T^\prime)$, and for all $(T,\mu),(T^\prime,\mu^\prime)\in\mathbb{T}^{\rm emb}_{\rm w}$, we have $d_{\rm GHP}((T,\mu),(T^\prime,\mu^\prime))\le\delta_{\rm HP}((T,\mu),(T^\prime,\mu^\prime))$. 
  \item[\rm(iii)] Every rooted compact $\mathbb{R}$-tree is equivalent to an element of $\mathbb{T}^{\rm emb}$, and every rooted weighted compact $\mathbb{R}$-tree is
    equivalent to an element of $\mathbb{T}^{\rm emb}$.
  \end{enumerate}  
\end{prop}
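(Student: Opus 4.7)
For \textbf{(ii)}, I would simply specialise the infima in \eqref{GHdist} and \eqref{GHPdist} to the particular choice $(\mathcal M,\delta)=(l^1(\mathbb U),\|\cdot\|_{l^1})$ with $\varphi,\varphi'$ the identity inclusions $T,T'\hookrightarrow l^1(\mathbb U)$. Both roots map to $0\in l^1(\mathbb U)$, so $\delta(\varphi(\rho),\varphi'(\rho'))=0$, and the Hausdorff and Prokhorov terms reduce to $\delta_{\rm H}(T,T')$ and $\delta_{\rm P}(\mu,\mu')$. Since the infima are bounded above by these particular maxima, the inequalities follow.

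For \textbf{(iii)}, I would build an isometric embedding $\varphi:T\to l^1(\mathbb U)$ with $\varphi(\rho)=0$ by exhausting $T$ along a countable dense sequence. Fix any bijection between $\mathbb N$ and $\mathbb U$, pick $x_1=\rho$ and $(x_n)_{n\ge 1}$ dense in $T$, and set $S_n=\bigcup_{k\le n}[[\rho,x_k]]$. Given an isometry $\varphi_{n-1}:S_{n-1}\to l^1(\mathbb U)$, either $x_n\in S_{n-1}$ or there is a unique $y_n\in S_{n-1}$ with $[[\rho,x_n]]\cap S_{n-1}=[[\rho,y_n]]$; in the latter case map the new arc $[[y_n,x_n]]$ of length $\ell_n$ linearly onto $\varphi_{n-1}(y_n)+[0,\ell_n]\,e_{\mathbf i_n}$, where $\mathbf i_n$ is the first coordinate that $\varphi_{n-1}(S_{n-1})$ does not touch. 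Because distances along a fresh coordinate add to the existing $l^1$-distances through $y_n$, the extension $\varphi_n$ is an isometry on $S_n$. Since $\bigcup_n S_n$ is dense in the compact (hence complete) space $T$, uniform continuity yields a unique isometric extension $\varphi:T\to l^1(\mathbb U)$, and $\varphi_*\mu$ handles the weighted case.

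For \textbf{(i)}, separability of $(\mathbb T^{\rm emb},\delta_{\rm H})$ is inherited from the (standard) separability of the hyperspace of non-empty compact subsets of the separable space $l^1(\mathbb U)$ under $\delta_{\rm H}$, and separability of $(\mathbb T^{\rm emb}_{\rm w},\delta_{\rm HP})$ follows by adjoining separability of Borel probability measures on the Polish space $l^1(\mathbb U)$ under $\delta_{\rm P}$. For completeness, let $(T_n)$ be Cauchy in $\delta_{\rm H}$. Completeness of $l^1(\mathbb U)$ makes its hyperspace $\delta_{\rm H}$-complete, so $T_n\to\tilde T^\infty$ for some non-empty compact $\tilde T^\infty\subset l^1(\mathbb U)$ with $0\in\tilde T^\infty$. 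By (ii), the convergence also holds in $d_{\rm GH}$ with root $0$ and restricted $l^1$-metric, and Proposition \ref{sepcompt} furnishes a limit $T^\infty\in\mathbb T$. Uniqueness of Gromov-Hausdorff limits up to rooted isometry forces $(\tilde T^\infty,\|\cdot\|_{l^1}|_{\tilde T^\infty},0)$ to be isometric to $T^\infty$; being an $\mathbb R$-tree is an isometric invariant, so $\tilde T^\infty\in\mathbb T^{\rm emb}$. The weighted version adds a Prokhorov-Cauchy sequence $(\mu_n)$ whose limit $\tilde\mu^\infty$ on $l^1(\mathbb U)$ is supported on $\tilde T^\infty$ by a Portmanteau argument against the $\delta_{\rm H}$-convergence of supports, and the analogous GHP-uniqueness argument places $(\tilde T^\infty,\tilde\mu^\infty)$ in $\mathbb T^{\rm emb}_{\rm w}$. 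The only delicate step is exactly this identification of the hyperspace limit with a genuine $\mathbb R$-tree in its restricted metric; circumventing a direct verification of the $\mathbb R$-tree axioms for $\tilde T^\infty$ by bootstrapping from (ii) and Proposition \ref{sepcompt} is what keeps the argument clean.
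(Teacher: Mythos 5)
Your proof is correct. The paper does not actually supply an argument for this proposition — it says ``This is well-known'' and cites \cite[Propositions 3.6 and 3.7]{DuWi1}; you are filling in exactly the standard details that the authors defer to references, and you do so accurately. A few comments on the checks that matter. In (iii), the inductive step relies on the $\mathbb R$-tree fact that if $S_{n-1}$ is a closed subtree and $z\in[[y_n,x_n]]\setminus\{y_n\}$, then for any $x\in S_{n-1}$ the geodesic $[[x,z]]$ passes through $y_n$ (the projection of $z$ onto $S_{n-1}$), so that $d(x,z)=d(x,y_n)+d(y_n,z)$; this is what makes ``fresh coordinate adds in $l^1$'' yield an isometry, and it is worth stating explicitly. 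One should also note $\|\varphi_n(z)\|_1=d(\rho,z)\le{\rm ht}(T)$, so the image genuinely lands in $l^1(\mathbb U)$ (nonnegative, summable). In (i), the bootstrap through (ii) and Proposition~\ref{sepcompt} — using completeness of $(\mathbb T,d_{\rm GH})$ plus uniqueness of pointed Gromov--Hausdorff limits to conclude that the hyperspace limit $\tilde T^\infty$ is isometric to an $\mathbb R$-tree — is a clean alternative to directly verifying closure of the $\mathbb R$-tree property under $\delta_{\rm H}$-limits (e.g.\ via the four-point condition); both are valid, and you correctly flag this as the one delicate step. Your Portmanteau argument that $\tilde\mu^\infty$ is carried by $\tilde T^\infty$ (via $\mu_n((\tilde T^\infty)^\epsilon)=1$ eventually, for each $\epsilon>0$) is also correct.
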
  
\begin{proof} This is well-known. (ii) is trivial. The remainder is easily deduced from known properties of Hausdorff and Prokhorov metrics. See for example \cite[Propositions 3.6 and 3.7]{DuWi1} for the statements of (i) and (iii) in the case of  $(\mathbb{T}^{\rm emb},\delta_{\rm H})$. 
\end{proof}

\section{Construction of CRTs using recursive tree processes} \label{Sec4}

\subsection{The setting for recursive tree frameworks relating $\mathbb R$-trees and generalised strings} \label{rtp}
\label{setting}
We now use a specific recursive tree framework to construct (possibly weighted) random $\mathbb R$-trees out of i.i.d. copies of a random string of beads or a random generalised string $\xi=([0,L],(X_i)_{i\ge 1},(P_i)_{i\ge 1},\Lambda)$.

Let $\mathbb T$ be the space of equivalence classes of rooted compact $\mathbb R$-trees, as in Section \ref{Sec31}, and let $\widetilde{\Xi}_{\rm s}$ be the set of strings of beads defined by
\begin{equation}
\widetilde{\Xi}_{\rm s} = \left\{\left([0,\ell],\sum_{i \geq 1}p_i \delta_{x_i} \right) \text{ such that (i)$_{\rm s}$, (ii)$_{\rm s}$, (iii) hold} \right\}  \label{Xi} \end{equation} 
where the properties (i)$_{\rm s}$, (ii)$_{\rm s}$, (iii) are given by
\begin{itemize}
\item[(i)$_{\rm s}$] $\ell>0$ and $x_i\in[0,\ell]$, $i\ge 1$, distinct, with $\ell=\sup\{x_i\colon p_i>0,i\ge 1\}$;
\item[(ii)$_{\rm s}$] $1 > p_1\geq p_2\geq\cdots\geq 0$ with $\sum_{i\ge 1}p_i=1$;
\item[(iii)$_{\,\,}$] the sequence $(x_i)_{i \geq 1}$ has indices assigned in decreasing order of the masses $(p_i)_{i \geq 1}$; indices are assigned according to increasing distance to $0$ if atom masses have the same size.
\end{itemize}
Each element in $\widetilde{\Xi}_{\rm s}$ is characterised via a constant $\ell>0$ and a sequence of distinct atoms $(x_i)_{i \geq 1}$ with respective masses $(p_i)_{i \geq 1}$ in decreasing order, summing to 1. Therefore, we consider the set 
\begin{equation*}\Xi_{\rm s}:=\left\{\left([0,\ell], \left(x_i\right)_{i \geq 1}, \left(p_i\right)_{i \geq 1}\right)\text{ such that (i)$_{\rm s}$, (ii)$_{\rm s}$, (iii) hold}\right\}\end{equation*} 
instead of $\widetilde{\Xi}_s$, noting that $\Xi_{\rm s}$ and $\widetilde{\Xi}_{\rm s}$ are in natural one-to-one correspondence if we enforce a further convention about $x_i$ when $p_i=0$, which it is sometimes more convenient not to do.

We will also consider the set $\Xi_{\rm g}$ of generalised strings given by  
\begin{equation*}
{\Xi}_{\rm g} = \left\{ \left([0,\ell], (x_i)_{i \geq 1)}, (p_i)_{i \geq 1}, \lambda \right)\text{ where } \lambda \text{ is a measure on } [0,\ell] \text{ such that (i), (ii)$_{\rm g}$, (iii) hold} \right\} \end{equation*} where 
\begin{itemize}
\item[(i)$_{\,\;}$] $\ell>0$ and $x_i\in[0,\ell]$, $i\ge 1$, not necessarily distinct;
\item[(ii)$_{\rm g}$] $1 > p_1\geq p_2\geq\cdots\geq 0$ with $\sum_{i\ge 1}p_i=1-\lambda\left([0,\ell]\right)$.
\end{itemize}
Finally, we allow not necessarily summable atom masses $(p_i)_{i \geq 1}$ in the space $\Xi$ given by 
\begin{equation*}
{\Xi} = \left\{ \left([0,\ell], (x_i)_{i \geq 1)}, (p_i)_{i \geq 1} \right)\text{ such that (i), (ii), (iii) hold} \right\}  \subset [0,\infty) \times [0,\infty)^{\mathbb N} \times l^\infty(\mathbb N) 
\end{equation*} 
equipped with the subset topology of the natural product topology, where 
\begin{itemize}
\item[(ii)] $p_1\geq p_2\geq\cdots\geq 0$.\pagebreak[2]
\end{itemize}
We set $\Xi^*:= \Xi\times \mathbb T^{\infty}$ where $\mathbb T^{\infty}$ is the set of infinite sequences in $\mathbb T$.
We will work with the space $\Xi$ in Section \ref{fixsec} to establish the general fixpoint result of Theorem \ref{uniquefix} and the recursive construction of Theorem \ref{constr3}, which also yield all random $\mathbb{R}$-trees needed for the other theorems of the introduction. The spaces $\Xi_{\rm g}\supset\Xi_{\rm s}$ capture the
more restrictive settings of those other theorems and are used in Section \ref{resultsstrings} to add mass measures to the constructions.  
From now on, let $\beta \in (0, \infty)$ be fixed. We equip  $\Xi^*$ with the metric $d_{\beta}$ where $d_{\beta}((\xi,\tau_i,i\ge 1),(\xi',\tau_i',i\ge 1))$ is defined as 
\begin{equation*} \left|\ell - \ell' \right| \vee \sup_{i \geq 1}\left(\left| x_i - x_i'\right|  \vee \left| (p_i)^{\beta} - (p_i')^{\beta}\right|\vee d_{\rm GH}(\tau_i, \tau_i') \vee d_{\rm GH} \left((p_i)^{\beta}\tau_i, (p_i')^{\beta} \tau_i'\right)\right)  \end{equation*}
for $\xi=\left([0,\ell], (x_i)_{i \geq 1}, (p_i)_{i \geq 1}\right)$, $\xi'= \left([0,\ell'], (x'_i)_{i \geq 1}, (p'_i)_{i \geq 1}\right) \in \Xi$, where we recall that $p_i^\beta \tau_i$ denotes the $\mathbb R$-tree $\tau_i$ with distances rescaled by $p_i^\beta$.

\begin{prop} \label{sep} The metric space $\left(\Xi^*,d_{\beta}\right)$ is separable.\end{prop}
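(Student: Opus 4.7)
The plan is to exhibit an explicit countable dense subset $D\subset\Xi^*$. By Proposition~\ref{sepcompt}, the metric space $(\mathbb{T},d_{\rm GH})$ is separable; I fix a countable dense subset $\mathbb{T}_0\subset\mathbb{T}$ and write $\rho_0\in\mathbb{T}_0$ for the one-point tree. I would take $D$ to consist of all tuples $(([0,\ell'],(x'_i)_{i\ge 1},(p'_i)_{i\ge 1}),(\tau'_i)_{i\ge 1})$ for which $\ell'\in\mathbb{Q}_+$; there exists $N\in\mathbb{N}$ with $(x'_i,p'_i,\tau'_i)=(0,0,\rho_0)$ for $i>N$; and for $i\le N$ the data $(p'_i)^\beta\in\mathbb{Q}_+$, $x'_i\in\mathbb{Q}\cap[0,\ell']$, $\tau'_i\in\mathbb{T}_0$, subject to conditions (i)--(iii) being satisfied. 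The set $D$ is then a countable union over $N\in\mathbb{N}$ of finite-dimensional rational grids indexed by finite tuples from $\mathbb{T}_0$, hence countable.

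For density, fix $(\xi,(\tau_i)_{i\ge 1})\in\Xi^*$ and $\varepsilon>0$. First I would choose $\ell'\in\mathbb{Q}_+$ with $|\ell-\ell'|<\varepsilon$. Since $(p_i)_{i\ge 1}$ is a bounded non-increasing sequence of non-negative reals, it converges to some $p_\infty\in[0,\infty)$; combined with the monotone indexing convention (iii), this forces enough regularity on the tails of $(x_i)_{i\ge 1}$ and on the sequence of scaled trees $(p_i^\beta\tau_i)_{i\ge 1}$ that a truncation argument applies. I would pick $N$ large enough that the supremum over $i>N$ of each of $|x_i|$, $(p_i)^\beta$, $d_{\rm GH}(\tau_i,\rho_0)$, and $d_{\rm GH}(p_i^\beta\tau_i,\rho_0)=p_i^\beta\,{\rm ht}(\tau_i)$ is less than $\varepsilon$, then coordinate-wise approximate the first $N$ entries $(x_i,(p_i)^\beta,\tau_i)$ by rationals (for the first two) and by elements of $\mathbb{T}_0$ (for $\tau_i$), each within $\varepsilon$. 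Tiny perturbations of the rational $(p'_i)^\beta$, preserving monotonicity, restore condition (iii).

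The hardest step will be the uniform tail control, since the metric $d_\beta$ is a supremum over $i\ge 1$ and, in isolation, neither $(\mathbb{T}^{\mathbb{N}},\sup d_{\rm GH})$ nor $(l^\infty(\mathbb{N}),\|\cdot\|_\infty)$ is separable. The resolution is to exploit the joint appearance of the rescaled distance $d_{\rm GH}(p_i^\beta\tau_i,(p'_i)^\beta\tau'_i)$ in $d_\beta$: once $(p_i)^\beta$ is small, the scaled trees $p_i^\beta\tau_i$ have height $p_i^\beta\,{\rm ht}(\tau_i)$, which vanishes along every sequence for which $d_\beta$ is finite relative to the trivial reference tail in $D$. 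Approximating by $\rho_0$ in the tail is therefore harmless, and the problem reduces to approximating finitely many head coordinates in separable factors.
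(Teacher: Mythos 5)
Your argument hits a genuine gap at exactly the spot you yourself flag as ``the hardest step,'' and the proposed workaround does not close it. You want to pick $N$ so large that $\sup_{i>N}|x_i|$, $\sup_{i>N}p_i^\beta$, $\sup_{i>N}d_{\rm GH}(\tau_i,\rho_0)$ and $\sup_{i>N}p_i^\beta\,{\rm ht}(\tau_i)$ are all below $\varepsilon$, but no such $N$ exists in general: none of $x_i\to 0$, $p_i\to 0$, ${\rm ht}(\tau_i)\to 0$ is imposed by the definition of $\Xi^*$, and the appeal to the rescaled term $d_{\rm GH}(p_i^\beta\tau_i,(p_i')^\beta\tau_i')$ is beside the point because $d_\beta$ also carries the \emph{unscaled} term $d_{\rm GH}(\tau_i,\tau_i')$, which your resolution does not touch. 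In fact the statement is false as written. Fix $\ell=1$, $x_i=1/2$ and $p_i=2^{-i}$ for all $i$ (so conditions (i), (ii), (iii) hold with all masses distinct), and for each $a\in\{1,2\}^{\mathbb{N}}$ let $\tau_i^{(a)}$ be a rooted segment of length $a_i$. Since $d_{\rm GH}\ge\frac{1}{2}|{\rm ht}(T)-{\rm ht}(T')|$, rooted segments of lengths $1$ and $2$ are at $d_{\rm GH}$-distance at least $1/2$, so any two of the resulting elements of $\Xi^*$ indexed by distinct $a\in\{1,2\}^{\mathbb N}$ are at $d_\beta$-distance at least $1/2$. This is an uncountable $\frac12$-separated subset, so $(\Xi^*,d_\beta)$ is not separable, and no choice of $D$ can be dense.

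For comparison, the paper's own proof breaks at the same point, just less visibly: after establishing separability of a single factor $([0,\infty)\times\mathbb{T},\tilde d_\beta)$, it concludes by asserting that a countable product of separable metric spaces equipped with the supremum metric $d_\infty$ is separable. That assertion is false in general; already $\{0,1\}^{\mathbb{N}}$ with $d_\infty$ is uncountable and $1$-separated. To your credit, you explicitly identify the obstruction that the paper glosses over, and your dense set $D$ is essentially the right candidate; the error is only in believing the rescaled term rescues the tail. Note also that the downstream use of this proposition is dispensable: Proposition~\ref{meas} only needs Borel measurability of $\phi_\beta$, and continuity of $\phi_\beta\restriction_{C_\beta}$ already gives that for arbitrary topological spaces, so the separability hypothesis in Lemma~\ref{help} and the conclusion of Proposition~\ref{sep} are not in fact used.
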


\begin{proof} We first show that the space $([0,\infty)\times \mathbb T, \tilde{d}_\beta)$ is separable, where  
\begin{equation}
\tilde{d}_\beta\left(\left(p,\tau\right), \left(p',\tau' \right)\right) :=\left|p^\beta-p'^\beta \right| \vee d_{\rm GH}\left(\tau, \tau'\right)\vee d_{\rm GH}\left(p^{\beta}\tau, \left(p'\right)^\beta\tau'\right)  .
\end{equation}
Recall that $\mathbb T$ equipped with the Gromov-Hausdorff distance $d_{\rm GH}$ and the space $[0,\infty)$ equipped with the Euclidian distance are separable, i.e. there exist countable dense subsets $\mathbb T' \subset \mathbb T$, $\mathbb Q \cap [0,\infty)\subset [0,\infty)$ such that for any $\epsilon >0$ and any $p \in [0,\infty)$, $\tau \in \mathbb T$ with ${\rm ht}(\tau)>0$, there are $p' \in \mathbb Q \cap (0,\infty)$, ${\tau'} \in {\mathbb T}'$ such that 
\begin{equation*} \left|p^\beta-\left(p'\right)^\beta \right| < \min\left\{\epsilon,{\epsilon}/\left(2 {\rm ht}\left(\tau\right)\right)\right\}, \qquad d_{\rm GH}\left(\tau, {\tau'} \right) < \min\left\{\epsilon,{\epsilon}/{2(p')^\beta}\right\},
\end{equation*}
see Proposition \ref{sepcompt}. Clearly, the set $D:=(\mathbb Q \cap[0,\infty))\times \mathbb T'$ is countable. Also,
\begin{align*}
d_{\rm GH}\left(p^{\beta}\tau, \left(p'\right)^\beta\tau'\right) &\leq d_{\rm GH}\left(p^{\beta}\tau, \left(p'\right)^\beta\tau\right) + d_{\rm GH}\left(\left(p'\right)^{\beta}\tau, \left(p'\right)^\beta\tau'\right) \\
& \leq \left|p^\beta-\left(p'\right)^\beta\right| {{\rm ht}}\left(\tau\right)+\left(p'\right)^\beta d_{\rm GH}\left(\tau, \tau'\right)\\
&< \left(\epsilon/\left(2{\rm ht}\left(\tau \right) \right) \right) {\rm ht}\left(\tau\right)+(p')^\beta{\epsilon}/{2(p')^\beta}=\epsilon,
\end{align*}
where we applied the triangle inequality. Hence, $D$ is dense in $[0,\infty)\times \mathbb T$, and hence $([0,\infty)\times \mathbb T, \tilde{d}_\beta)$ separable. Note that $[0,\infty)$ equipped with the Euclidian distance is separable. We complete the proof by noting the fact that if $(A_1, d_{1}), (A_2, d_{2}), \ldots$ are separable metric spaces, then the space $((A_1 \times A_2 \times \ldots), d_{\infty})$ is separable, where $d_{\infty}$ is defined by $d_{\infty}((a_1, a_2, \ldots),(a'_1, a'_2, \ldots)):=\sup_{j \geq 1} d_j(a_j, a_j')$.\end{proof}

Next, we consider $\xi=([0,\ell],(x_i)_{i\geq 1},(p_i)_{i\geq 1})\in\Xi$ and $(\tau_i, d_i, \rho_i)_{i \geq 1}$ a sequence of $\mathbb R$-trees. For any $i \geq 1$,  consider the rescaled tree $p_i^{\beta}\tau_i$, i.e. $(\tau_i,p_i^{\beta}d_i, \rho_i)$, and attach $p_i^{\beta}\tau_i$ to the point $x_i$ of the atom $p_i$ by identifying the root $\rho_i$ with the point $x_i$. More formally, define
\begin{equation}
\left(\tau', d', \rho'\right) \label{attach1}
\end{equation}
by taking the disjoint union $\tau':= [0,\ell] \sqcup \bigsqcup_{i \geq 1} \tau_i \setminus \{\rho_i\}$ and the metric $d'$ on $\tau'$ given by
\begin{equation}
d'(x,y):= \begin{cases} \left|x-y\right|   &\hspace{-0.2cm}\text{if } x,y \in [0,\ell], \\
p_j^\beta d_j(x,y) &\hspace{-0.2cm}\text{if } x,y \in \tau_j, j \geq 1, \\
\left|x-x_j\right| + p_j^\beta d_j(\rho_j, y) &\hspace{-0.2cm}\text{if } x \in [0,\ell], y \in \tau_j, j \geq 1, \\
p_{j_1}^\beta d_{j_1}(x,\rho_{j_1}) +\left|x_1-x_2\right|+ p_{j_2}^\beta d_{j_2}(\rho_{j_2}, y) &\hspace{-0.2cm}\text{if } x \in \tau_{j_1}, y \in \tau_{j_2}, j_1 \neq j_2, \end{cases} \label{attach2} \end{equation}  
where we define the root by $\rho':=0$. We only consider $(\tau', d', \rho')$ when compact. It is easy to see that the equivalence class of $(\tau', d', \rho')$ only depends on the equivalence classes of $(\tau_i, d_i, \rho_i)$, $i \geq 1$, hence we can define the subset $C_\beta\subset \Xi^*$ as the set of all elements $\vartheta=(\xi,\tau_i,i \geq 1) \in \Xi^*$ such that $(\tau',d',\rho')$ is compact (for any representatives), and the map $\phi_\beta \colon \Xi^* \rightarrow \mathbb T$,
\begin{equation*} \vartheta:=([0,\ell], (x_i)_{i  \geq 1}, (p_i)_{i  \geq 1},(\tau_i)_{i  \geq 1}) \mapsto {\phi}_\beta ([0,\ell], (x_i)_{i  \geq 1}, (p_i)_{i \geq 1}, (\tau_i)_{i \geq 1}) \end{equation*}
mapping $\vartheta=(\xi,\tau_i,i\ge 1)$ to the equivalence class in $\mathbb T$ associated with $(\tau', d', \rho')$  if $\vartheta \in  C_\beta$, and to the equivalence class of the one-point tree $(\{\rho\},0,\rho)$ otherwise. 


\begin{prop} \label{meas} The map $\phi_\beta: \Xi^* \rightarrow \mathbb T$ is Borel measurable. \end{prop}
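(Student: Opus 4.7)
The plan is to write $\phi_\beta$ as a pointwise limit of truncated maps $\phi_\beta^{(n)}$ on a Borel subset $C_\beta\subset\Xi^*$, with $\phi_\beta$ equal to a constant on the complement. For each $n\ge 1$, I would define $\phi_\beta^{(n)}\colon\Xi^*\to\mathbb T$ by the construction \eqref{attach1}-\eqref{attach2} applied only to the first $n$ subtrees (formally, replace $\tau_i$ by the one-point tree for $i>n$); this output is compact for every input, being built from finitely many compact pieces. The first and main task is to show that each $\phi_\beta^{(n)}$ is continuous from $(\Xi^*,d_\beta)$ to $(\mathbb T,d_{\rm GH})$. Given $d_\beta(\vartheta,\vartheta')<\epsilon$, I would pick correspondences $R_i$ of distortion at most $2\epsilon$ between $p_i^\beta\tau_i$ and $(p_i')^\beta\tau_i'$ containing the root pair $(\rho_i,\rho_i')$, for $1\le i\le n$, and glue them with a near-isometry (e.g.\ $t\mapsto t\ell'/\ell$) between $[0,\ell]$ and $[0,\ell']$ on the trunk, yielding a single correspondence on $\phi_\beta^{(n)}(\vartheta)\times\phi_\beta^{(n)}(\vartheta')$. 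Its distortion is $O(\epsilon)$ uniformly in $n$, cross-component contributions being controlled by $|x_i-x_i'|<\epsilon$ and $|\ell-\ell'|<\epsilon$.

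Next, I would characterise and measure $C_\beta$. A standard total-boundedness argument shows that the tree \eqref{attach1}-\eqref{attach2} is compact if and only if $p_i^\beta{\rm ht}(\tau_i)\to 0$ as $i\to\infty$: if infinitely many attached subtrees had height $\ge\epsilon$, picking a deep point in each would produce pairwise $\epsilon$-separated points in $\tau'$, contradicting total boundedness; conversely, if the heights tend to $0$, then beyond some $N$ the attached subtrees lie in arbitrarily small neighbourhoods of the compact trunk $[0,\ell]$. Since $\vartheta\mapsto p_i^\beta{\rm ht}(\tau_i)={\rm ht}(p_i^\beta\tau_i)$ is $2$-Lipschitz in $d_\beta$ (via the $d_{\rm GH}(p_i^\beta\tau_i,(p_i')^\beta\tau_i')$ term), it follows that
\begin{equation*}
C_\beta \;=\; \bigcap_{k\ge 1}\;\bigcup_{N\ge 1}\;\bigcap_{i\ge N}\bigl\{\vartheta\in\Xi^*\colon p_i^\beta\,{\rm ht}(\tau_i)<1/k\bigr\}
\end{equation*}
is a Borel subset of $\Xi^*$; on $\Xi^*\setminus C_\beta$, $\phi_\beta$ takes the constant value of the one-point tree in $\mathbb T$, and is therefore trivially measurable there.

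For $\vartheta\in C_\beta$, the truncated tree $\phi_\beta^{(n)}(\vartheta)$ embeds isometrically as a subspace of $\phi_\beta(\vartheta)$, and the Hausdorff distance between the two inside $\phi_\beta(\vartheta)$ is bounded by $\sup_{i>n}p_i^\beta{\rm ht}(\tau_i)\to 0$; thus $\phi_\beta^{(n)}(\vartheta)\to\phi_\beta(\vartheta)$ in $d_{\rm GH}$ pointwise on $C_\beta$. Since $\mathbb T$ is a separable metric space (Proposition \ref{sepcompt}), the pointwise limit of the continuous maps $\phi_\beta^{(n)}|_{C_\beta}$ is Borel measurable, and combined with measurability on the complement this yields the proposition. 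I expect the main obstacle to be the continuity of $\phi_\beta^{(n)}$: the correspondences on the trunk and on each of the $n$ rescaled subtrees must be glued so that the cross-component distortion does not blow up with $n$, and the very definition of $d_\beta$—bounding $d_{\rm GH}(p_i^\beta\tau_i,(p_i')^\beta\tau_i')$ rather than only $d_{\rm GH}(\tau_i,\tau_i')$—is exactly what makes this uniform control possible.
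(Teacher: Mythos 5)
Your proof is correct, and it reaches the same structural decomposition as the paper (split $\Xi^*$ into the Borel set $C_\beta$ and its complement, handle the complement trivially, characterise $C_\beta$ via $p_i^\beta\,{\rm ht}(\tau_i)\to 0$), but the route on $C_\beta$ is genuinely different. The paper proves that $\phi_\beta\!\restriction_{C_\beta}$ is \emph{continuous} in one stroke, via the bound $d_{\rm GH}(\phi_\beta(\vartheta),\phi_\beta(\vartheta'))\le |\ell-\ell'|+\sup_i|x_i-x_i'|+\sup_i d_{\rm GH}(p_i^\beta\tau_i,(p_i')^\beta\tau_i')\le 3\,d_\beta(\vartheta,\vartheta')$, which already takes all infinitely many subtrees into account (this works precisely because $d_\beta$ itself is a supremum over $i$, so there is no issue in passing to infinitely many pieces), and then invokes a small measurability lemma (Lemma~\ref{help}) for maps that are constant off a Borel set and continuous on it. You instead introduce finite truncations $\phi_\beta^{(n)}$, prove each is continuous on \emph{all} of $\Xi^*$ (which is clean, since a finite graft is always compact and there is no question of which side is in $C_\beta$), and then obtain measurability of $\phi_\beta\!\restriction_{C_\beta}$ as a pointwise limit. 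What the paper's approach buys is the stronger conclusion of continuity on $C_\beta$ and a shorter argument; what yours buys is that the correspondence/distortion estimate only ever has to be carried out for a finite graft, so all compactness subtleties are pushed into the separate, elementary convergence statement $\delta_{\rm H}(\phi_\beta^{(n)}(\vartheta),\phi_\beta(\vartheta))\le\sup_{i>n}p_i^\beta{\rm ht}(\tau_i)\to 0$ on $C_\beta$. Both proofs are sound; if anything, yours gives slightly less (Borel rather than continuous on $C_\beta$), but that is all the proposition requires.
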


The proof of Proposition \ref{meas} is based on an elementary lemma, whose proof we provide for completeness.

\begin{lemma}\label{help} Let $(A, \mathcal A)$ and $(B, \mathcal B)$ be two separable metrisable topological spaces equipped with their Borel $\sigma$-algebras, and let 
  $A_0 \in \mathcal A$ and $b_0\in B$. Then any function $f: A \rightarrow B$ with $f(a)=b_0$ for all $a \in A \setminus A_0$ and $f \restriction_{A_0}$ continuous is 
  Borel measurable.  
\end{lemma}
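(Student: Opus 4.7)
The plan is to verify measurability by checking preimages of open sets of $B$ and splitting into two cases according to whether $b_0$ lies in the open set or not.

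First I would observe that, since the Borel $\sigma$-algebra $\mathcal B$ of $B$ is generated by its open sets, it suffices to show that $f^{-1}(V) \in \mathcal A$ for every open $V \subset B$. Fix such a $V$ and consider the continuous map $g := f \restriction_{A_0} \colon A_0 \to B$, where $A_0$ carries the subspace topology inherited from $A$. By continuity, $g^{-1}(V)$ is relatively open in $A_0$, so there exists an open set $U \subset A$ with $g^{-1}(V) = U \cap A_0$.

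Now I would split into cases. If $b_0 \notin V$, then no point of $A \setminus A_0$ maps into $V$, so
\begin{equation*}
f^{-1}(V) = g^{-1}(V) = U \cap A_0,
\end{equation*}
which belongs to $\mathcal A$ because $U$ is open in $A$ and $A_0 \in \mathcal A$ by hypothesis. If $b_0 \in V$, then every point of $A \setminus A_0$ maps into $V$, so
\begin{equation*}
f^{-1}(V) = (A \setminus A_0) \cup g^{-1}(V) = (A \setminus A_0) \cup (U \cap A_0),
\end{equation*}
which again lies in $\mathcal A$ since $A_0$ and $U$ are both in $\mathcal A$.

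In both cases $f^{-1}(V) \in \mathcal A$, so $f$ is Borel measurable. There is no real obstacle here; the only point requiring care is to remember that a relatively open subset of $A_0$ is of the form $U \cap A_0$ for some open $U \subset A$, which is why measurability of $A_0$ is needed to pass from relatively open sets back to Borel sets in $A$. Separability/metrisability of $A$ and $B$ is not actually used in the argument but is part of the ambient setting of the paper.
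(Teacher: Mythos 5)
Your proof is correct and follows essentially the same strategy as the paper's: reduce to checking preimages, and split according to whether $b_0$ lies in the target set. You streamline slightly by carrying out the case split directly at the level of open sets $V\subset B$, whereas the paper first establishes Borel measurability of $f\restriction_{A_0}$ (via the generated-$\sigma$-algebra argument) and then decomposes $f^{-1}(B_0)$ for an arbitrary Borel set $B_0$; your version avoids that intermediate step, and your observation that separability and metrisability play no role is also accurate.
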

\begin{proof} First consider the case $A_0=A$ so that $f$ is continuous. Consider the collection of sets
\begin{equation*} \mathcal G:=\left\{ B_0 \in \mathcal B: f^{-1}(B_0) \in \mathcal A \right\}. \end{equation*}
By continuity, the pre-image $f^{-1}(B_0)$ of any open set $B_0 \in \mathcal B$ is open, and hence $\mathcal A$-measurable, i.e. $B_0\in \mathcal G$ for all open sets $B_0 \in \mathcal B$. Furthermore, by properties of the pre-image function $f^{-1}$, the collection of sets $\mathcal G$ is a $\sigma$-algebra. Since the Borel $\sigma$-algebra $\mathcal B$ on $B$ is generated by all open sets, we conclude that $\mathcal B=\mathcal G$, i.e. $f$ is Borel measurable.

  Now assume $A_0\neq A$. Then the subset topology on $A_0$ is separable and induced by any metric that generates $(A,\mathcal A)$, restricted to $A_0$. By the first case, $f_0=f\restriction_{A_0}$ is
  Borel measurable. We need to show that $f^{-1}(B_0) \in \mathcal A$ for any $B_0 \in \mathcal B$. We have that
\begin{equation*}
f^{-1}(B_0) = \begin{cases} \left(f^{-1}(b_0)\setminus f_0^{-1}(b_0) \right) \cup f_0^{-1}(B_0) & \text{ if } b_0 \in B_0, \\
f_0^{-1}(B_0)  & \text{ if } b_0 \notin B_0. \end{cases}
\end{equation*}
Since $f_0$ is Borel measurable, we obtain $f_0^{-1}(B_0) \in \mathcal A$. Furthermore, $f^{-1}(b_0) \setminus f_0^{-1}(b_0)=A\setminus A_0 \in \mathcal A$ since $A, A_0 \in \mathcal A$. Thus, $f^{-1}(B_0) \in \mathcal A$ for all $B_0 \in \mathcal B$, i.e. $f$ is Borel measurable.
\end{proof}

\begin{proof}[Proof of Proposition \ref{meas}] To apply Lemma \ref{help} to $A=\Xi^*$, $B=\mathbb T$, $f=\phi_\beta$, $A_0=C_\beta$ and $b_0$ the equivalence 
  class of the one-point 
  tree $(\{\rho\},0,\rho\})$, we check the assumptions of Lemma \ref{help}. The topological assumptions on the spaces hold by Propositions \ref{sep} and \ref{sepcompt}.
  We next turn to the continuity of ${\phi}_\beta \restriction_{C_\beta}$. 

Let $(\vartheta^{(n)})_{n \geq 1}$ be a sequence in $ C_\beta$ such that $\lim_{n \rightarrow \infty} d_{\beta}( \vartheta^{(n)}, \vartheta) = 0$ for some $\vartheta \in C_\beta$. Then, using the notation \begin{equation*} \vartheta^{(n)}=\left([0,\ell^{(n)}], \left(x_i^{(n)}\right)_{i \geq 1}, \left(p_i^{(n)}\right)_{i \geq 1}, \left(\tau_i^{(n)}\right)_{i \geq 1}\right), \quad n \geq 1, \end{equation*} and $\vartheta=([0,\ell],(x_i)_{i \geq 1}, (p_i)_{i \geq 1}, (\tau_i)_{i \geq 1})$, we obtain \begin{align*}
d_{\rm GH}\left( \phi_\beta\left(\vartheta^{(n)}\right) , \phi_\beta\left(\vartheta\right) \right)&\leq \left|\ell^{(n)}- \ell \right| + \sup_{i \geq 1} \left| x_i^{(n)}-  x_i \right| +\sup_{i \geq 1} d_{\rm GH}\left(\left(p_i^{(n)}\right)^\beta \tau_i^{(n)}, \left(p_i\right)^\beta \tau_i  \right) \\
&\leq 3 d_{\beta}\left(\vartheta^{(n)}, \vartheta\right),
\end{align*} 
i.e. $\lim_{n \rightarrow \infty} d_{\rm GH}( \phi_\beta(\vartheta^{(n)}) , \phi_\beta(\vartheta))=0$, and hence, ${\phi}_\beta\restriction_{C_\beta}$ is continuous. 
 
  It remains to show that $C_\beta \in \mathcal B(\Xi^*)$. For $\left([0,\ell], (x_i)_{i\geq 1}, (p_i)_{i \geq 1}, (\tau_i)_{i \geq 1} \right) \in \Xi^*$, the representatives of ${\phi}_\beta \left([0,\ell], (x_i)_{i\geq 1}, (p_i)_{i \geq 1},(\tau_i)_{i \geq 1}\right)$ are compact if and only if for all $N \geq 1$, we can find $I \geq 1$ such that for all $i \geq I$, ${\rm ht}(p_i^\beta \tau_i) < 1/N$, i.e. 
\begin{equation*}  C_\beta={\bigcap \limits_{N \geq 1} \bigcup \limits_{I \geq 1} \bigcap\limits_{i \geq I} } \{ \left([0,\ell], (x_i)_{i \geq 1}, (p_i)_{i \geq 1}, (\tau_i)_{i \geq 1}\right) \in \Xi^*: p_i^\beta {\rm ht}(\tau_i) < 1/N \}. 
\end{equation*}
Since the function ${\rm ht}\colon \mathbb T \to  [0, \infty)$ is Borel measurable by continuity, $C_\beta$ is $\cB(\Xi^*)$-measurable. 
\end{proof}
Related grafting operations have been studied in various tree formalisms. See e.g.\ \cite{16,DuWi1,ADH15}.

To complete the setup for the contraction method stated as Lemma \ref{contrac}, consider the set of probability measures $\mathcal P(\mathbb T)$ on the space $\mathbb T$ of equivalence classes of rooted compact $\mathbb R$-trees, and for $p\ge 1$, the subset $\mathcal P_p \subset \mathcal P(\mathbb T)$ given by
\begin{equation} \mathcal P_{p}:=\left\{\eta\in\mathcal P(\mathbb T)\colon\mathbb E\left[{\rm ht}(\tau)^{p}\right] < \infty \text{ for } \tau \sim \eta \right\}. \end{equation}

We follow \cite{14} and equip $\mathcal P_p$ with the \textit{Wasserstein metric} of order $p \geq 1$, which is defined by
\begin{equation}
W_p\left(\eta, \eta'\right) :=\left(\inf \mathbb E\left[\left|d_{\rm GH}\left(\tau, \tau'\right)\right|^p\right]\right)^{1/p}, \qquad \eta, \eta' \in \mathcal P_p,
\end{equation}
where the infimum is taken over all joint distributions of $(\tau,\tau^\prime)$ on $\mathbb T^2$ with marginal distributions $\tau\sim\eta$ and $\tau^\prime\sim\eta'$. The space $(\mathcal P_p, W_{p})$ is complete since $d_{\rm GH}$ is a complete metric on $\mathbb T$, see e.g. \cite{W1, 21}. Convergence in 
$(\mathcal P_p,W_p)$ implies weak convergence on $(\bT,d_{\rm GH})$ and convergence of $p$th tree height moments.

\subsection{Fixpoints, construction of random $\mathbb R$-trees and Gromov-Hausdorff limits}\label{fixsec}

Let $\xi=([0,L],(X_i)_{i\ge 1},(P_i)_{i\ge 1})$ be any $\Xi$-valued random variable and $N=\inf\{i\ge 0\colon P_{i+1}=0\}$ the number of non-zero atom masses $P_i$ of 
$\xi$, with the convention that $\inf\varnothing=\infty$. In this section, we will study the recursive tree framework 
$((\xi_{\mathbf i},N_{\mathbf i},i\in\mathbb U);\phi_\beta)$, in which the $(\xi_{\mathbf i},N_{\mathbf i})$, $\mathbf{i}\in\mathbb U$, are i.i.d. copies of $(\xi,N)$. Since
$N$ is a function of $\xi$, we will slightly abuse notation and terminology in denoting by $\nu$ the distribution of $\xi$ rather than $(\xi,N)$ and in referring to 
$((\xi_{\mathbf i},i\in\mathbb U);\phi_\beta)$ or $(\xi_{\mathbf i},i\in\mathbb U)$ as the RTF. 

\begin{lemma}[Contraction] \label{contract}  Let $\beta>0$, $p\ge 1$ and $([0,L], (X_i)_{i \geq 1}, (P_i)_{i \geq 1})\in \Xi$ with some distribution $\nu$ such that $\bE[L^p]<\infty$ and $\mathbb E[\sum_{j \geq 1}P_j^{p\beta}] < 1$. Then the map $\Phi_\beta \colon \mathcal P_p \rightarrow \mathcal P_p$ associated with 
$\phi_\beta: \Xi^* \rightarrow \mathbb T$ is a strict contraction with respect to the Wasserstein metric of order $p$, i.e. 
\begin{equation}
\sup \limits_{\eta, \eta' \in \mathcal P_p, \eta \neq \eta'} \frac{W_p \left(\Phi_\beta(\eta), \Phi_\beta(\eta')\right)}{W_p(\eta, \eta')} < 1.
\end{equation}
\end{lemma}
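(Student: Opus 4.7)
To prove the contraction I would couple $\Phi_\beta(\eta)$ and $\Phi_\beta(\eta')$ by feeding a common string of beads $\xi$ together with i.i.d.\ coupled pairs $(\tau_i,\tau_i')_{i\ge 1}$ into $\phi_\beta$. Precisely, fix $\eta,\eta'\in\mathcal P_p$ with $\eta\neq\eta'$, and choose a coupling $\gamma\in\mathcal P(\mathbb T^2)$ with marginals $\eta,\eta'$ for which $\mathbb E[d_{\rm GH}(\tau_1,\tau_1')^p]$ comes within $\epsilon$ of $W_p(\eta,\eta')^p$ (an optimal coupling exists since $(\mathbb T,d_{\rm GH})$ is Polish by Proposition \ref{sepcompt}). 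Let $((\tau_i,\tau_i'))_{i\ge 1}$ be i.i.d.\ $\gamma$, independent of $\xi=([0,L],(X_i)_{i\ge 1},(P_i)_{i\ge 1})\sim\nu$. Then $\tau:=\phi_\beta(\xi,\tau_i,i\ge 1)\sim\Phi_\beta(\eta)$ and $\tau':=\phi_\beta(\xi,\tau_i',i\ge 1)\sim\Phi_\beta(\eta')$, so bounding $\mathbb E[d_{\rm GH}(\tau,\tau')^p]$ bounds $W_p(\Phi_\beta(\eta),\Phi_\beta(\eta'))^p$.

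First I would check that $\Phi_\beta(\mathcal P_p)\subset\mathcal P_p$. Compactness of $\tau$ a.s.\ amounts to $P_i^\beta{\rm ht}(\tau_i)\to 0$, which follows because $\mathbb E\bigl[\sum_i P_i^{p\beta}{\rm ht}(\tau_i)^p\bigr]=\mathbb E\bigl[\sum_iP_i^{p\beta}\bigr]\mathbb E\bigl[{\rm ht}(\tau_1)^p\bigr]<\infty$ by independence and the hypotheses. The bound ${\rm ht}(\tau)\le L+\sup_iP_i^\beta{\rm ht}(\tau_i)$, combined with $(a+b)^p\le 2^{p-1}(a^p+b^p)$ and $(\sup_i a_i)^p\le\sum_i a_i^p$ for $a_i\ge 0$, then gives $\mathbb E[{\rm ht}(\tau)^p]<\infty$, placing $\Phi_\beta(\eta)$ in $\mathcal P_p$.

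The crux is the deterministic Gromov-Hausdorff inequality
$$d_{\rm GH}\bigl(\phi_\beta(\xi,\tau_i,i\ge 1),\phi_\beta(\xi,\tau_i',i\ge 1)\bigr)\le\sup_{i\ge 1}P_i^\beta\, d_{\rm GH}(\tau_i,\tau_i').$$
To obtain it, I would embed both trees into a common metric space constructed by keeping a single copy of the spine $[0,L]$ with root $0$ and, at each $X_i$, gluing an $\epsilon$-optimal isometric pairing of the two rescaled branches $P_i^\beta\tau_i$ and $P_i^\beta\tau_i'$ that realises their rooted Gromov-Hausdorff distance $P_i^\beta d_{\rm GH}(\tau_i,\tau_i')$ up to $\epsilon 2^{-i}$, with both roots at $X_i$. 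Spine points contribute $0$ to the Hausdorff distance, and a point on branch $i$ of either tree lies within $P_i^\beta d_{\rm GH}(\tau_i,\tau_i')+\epsilon 2^{-i}$ of the other tree; taking the supremum over $i$ and letting $\epsilon\downarrow 0$ yields the claim.

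Raising to the $p$-th power, using $(\sup_i a_i)^p\le\sum_i a_i^p$, taking expectation, and invoking independence of $\xi$ from $(\tau_i,\tau_i')_{i\ge 1}$,
$$\mathbb E\bigl[d_{\rm GH}(\tau,\tau')^p\bigr]\le\sum_{i\ge 1}\mathbb E[P_i^{p\beta}]\,\mathbb E[d_{\rm GH}(\tau_i,\tau_i')^p]\le\mathbb E\Bigl[\sum_{i\ge 1}P_i^{p\beta}\Bigr]\bigl(W_p(\eta,\eta')^p+\epsilon\bigr),$$
and letting $\epsilon\downarrow 0$ and taking $p$-th roots yields $W_p(\Phi_\beta(\eta),\Phi_\beta(\eta'))\le\mathbb E[\sum_iP_i^{p\beta}]^{1/p}W_p(\eta,\eta')$, with contraction constant $\mathbb E[\sum_iP_i^{p\beta}]^{1/p}<1$ by hypothesis, uniformly in $\eta,\eta'$. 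The main obstacle is the deterministic Gromov-Hausdorff bound: one has to construct the common embedding branch by branch while keeping the spine rigid, so that the branches decouple and the Hausdorff distance becomes a supremum rather than a sum; the rest is routine $L^p$-moment bookkeeping with an optimal (or $\epsilon$-optimal) coupling.
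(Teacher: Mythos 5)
Your plan reproduces the paper's argument step for step: verify $\Phi_\beta(\mathcal{P}_p)\subset\mathcal{P}_p$ via the height bound, pass to an ($\epsilon$-)optimal coupling, bound the Gromov--Hausdorff distance of the two grafted trees by the supremum of the rescaled branch distances, replace the supremum by a sum, and factor by independence. The one step that you spell out and that the paper only asserts is the deterministic inequality
$$d_{\rm GH}\bigl(\phi_\beta(\xi,\tau_i,i\ge1),\phi_\beta(\xi,\tau_i',i\ge1)\bigr)\le\sup_{i\ge1}P_i^\beta\,d_{\rm GH}(\tau_i,\tau_i'),$$
and this is where there is a genuine gap. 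Your construction asks, at each $X_i$, for a common metric space containing near-isometric copies of $P_i^\beta\tau_i$ and $P_i^\beta\tau_i'$ at Hausdorff distance roughly $P_i^\beta d_{\rm GH}(\tau_i,\tau_i')$ \emph{with both root images at $X_i$}. But an embedding realising the pointed Gromov--Hausdorff distance of \eqref{GHdist} up to $\epsilon$ only places the two root images within about $P_i^\beta d_{\rm GH}(\tau_i,\tau_i')$ of \emph{each other}; forcing them to coincide costs again, and in fact the displayed inequality is false as stated. Take $\xi$ with two atoms $p_1=p_2=1$ at the same (or arbitrarily close) location on a very short spine, and $\tau_1=\tau_2=[0,1]$, $\tau_1'=\tau_2'=[0,2]$, rooted at $0$. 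Then $\sup_i d_{\rm GH}(\tau_i,\tau_i')=\tfrac12$, while the two grafted trees are, up to an $\epsilon$ of spine, stars of radius $1$ and $2$ respectively; their diameters differ by $2$, so $d_{\rm GH}\ge 1$.

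The paper asserts the same inequality without justification (and again inside the proof of Proposition \ref{meas}), so you are inheriting the gap rather than creating it. The cleanest repair, for you and for the paper, is to replace $d_{\rm GH}$ by the root-identified Gromov--Hausdorff distance $\widehat d_{\rm GH}(T,T'):=\inf\{\delta_{\rm H}(\varphi(T),\varphi'(T'))\colon\varphi(\rho)=\varphi'(\rho')\}$, the infimum now running over embeddings that send the two roots to the same point. Your gluing is then legitimate and gives $\widehat d_{\rm GH}(\phi_\beta(\xi,\tau_i),\phi_\beta(\xi,\tau_i'))\le\sup_iP_i^\beta\,\widehat d_{\rm GH}(\tau_i,\tau_i')$; and since $d_{\rm GH}\le\widehat d_{\rm GH}\le 2\,d_{\rm GH}$ (the upper bound follows from a standard correspondence-to-metric construction), $\widehat d_{\rm GH}$ is a complete separable metric on $\mathbb T$ inducing the same topology, so the Banach-fixpoint machinery carries over verbatim with $\widehat d_{\rm GH}$ in place of $d_{\rm GH}$ and the stated hypotheses unchanged. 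By contrast, sticking with $d_{\rm GH}$ itself and using correspondences only yields $d_{\rm GH}(T,T')\le 2\sup_iP_i^\beta d_{\rm GH}(\tau_i,\tau_i')$, leading to the contraction constant $2\bigl(\mathbb E[\sum_iP_i^{p\beta}]\bigr)^{1/p}$, which is not below $1$ under the stated assumptions. Everything else in your proposal --- the moment bound for ${\rm ht}(\tau)$, the sup-by-sum estimate, the $\epsilon$-optimal coupling and the independence factorisation --- is correct and coincides with the paper's computation.
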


\begin{proof} Let $\xi=([0,L], (X_i)_{i \geq 1}, (P_i)_{i \geq 1})\sim\nu$ and $(\tau_i)_{i \geq 1}$ an i.i.d. sequence with $\tau_1\sim\eta\in\mathcal P_p$ independent of $\xi$. First note that
\begin{align*}\mathbb E\left[\left({\rm ht}\left(\phi_\beta\left(\xi,\tau_i,i\ge 1\right)\right)\right)^p\right]
  &\le\mathbb E\left[\left(L+\sup_{i\ge 1}{\rm ht}\left(P_i^\beta\tau_i\right)\right)^p\right]
  \le 2^p\mathbb E[L^p]+2^p\mathbb E\left[\sup_{i\ge 1}P_i^{p\beta}\left({\rm ht}(\tau_i)\right)^p\right],
\end{align*} 
where the first term is finite by assumption, and the second term can be further estimated above using
\begin{align*}\mathbb E\left[\sup_{i\ge 1}P_i^{p\beta}\left({\rm ht}(\tau_i)\right)^p\right]
  &\le \mathbb E\left[\sum_{i\ge 1}P_i^{p\beta}\left({\rm ht}(\tau_i)\right)^p\right]
  =\left(\sum_{i\ge 1}\mathbb E\left[P_i^{p\beta}\right]\right)\mathbb E\left[\left({\rm ht}(\tau_1)\right)^p\right],
\end{align*}
which is also finite by assumption and since $\eta\in\mathcal P_p$. Now suppose $(\tau'_i)_{i \geq 1}$ is another i.i.d. sequence independent of $\xi$ with $\tau_1'\sim\eta'\in\mathcal P_p$. Then
\begin{align*}
W^p_p\left(\Phi_\beta\left(\eta\right), \Phi_\beta\left(\eta'\right)\right) &\leq \inf \mathbb E\left[\left(d_{\rm GH}\left(\phi_\beta\left(\xi, \tau_i, i \geq 1\right), \phi_\beta\left(\xi, \tau_i', i\geq 1 \right)\right)\right)^p\right]\\
&\leq \inf  \mathbb E\left[ \sup_{i \geq 1} \left( d_{\rm GH} \left(P_i^\beta \tau_i, P_i^\beta \tau_i'\right)\right)^p \right],
\end{align*}
where the infimum is taken over all couplings of $(\tau_i)_{i \geq 1}$ and $(\tau'_i)_{i \geq 1}$. An argument similar to the one above for heights now yields for the Gromov-Hausdorff distances 
\begin{align}
\inf  \mathbb E\left[ \sup_{i \geq 1} \left( d_{\rm GH} \left(P_i^\beta \tau_i, P_i^\beta \tau_i'\right)\right)^p \right]
&\leq \inf \mathbb E\left[\sum_{i\ge 1} \left(P_i\right)^{p\beta} d_{\rm GH}\left( \tau_i, \tau_i'\right)^p \right] \nonumber\\
&= \left(\sum_{i\ge 1} \mathbb E\left[ \left(P_i\right)^{p\beta}\right]\right) \left(\inf\mathbb E\left[ d_{\rm GH}\left( \tau_1, \tau_1'\right)^p \right]\right)\nonumber\\ 
&= \left(\sum_{i\ge 1}\mathbb E\left[ \left(P_i\right)^{p\beta}\right]\right)W^p_p\left(\eta, \eta'\right)
\end{align}
where we bounded above the infimum over all couplings of $(\tau_i)_{i \geq 1}$ and $(\tau'_i)_{i \geq 1}$ by the infinimum over those couplings for which $(\tau_i,\tau'_i)$ are i.i.d..

Dividing by $W^p_p(\eta, \eta')$, taking the $1/p$-th power and the supremum over all $\eta, \eta' \in \mathcal P_p$, $\eta \neq \eta'$, we obtain 
\begin{equation} \sup \limits_{\eta, \eta' \in \mathcal P_p, \eta \neq \eta'} \frac{W_p(\Phi_\beta(\eta), \Phi_\beta(\eta'))}{W_p(\eta, \eta')} \leq  \left(\sum_{i\ge 1} \mathbb E\left[ \left(P_i \right)^{p\beta} \right]\right)^{1/p}<1.\end{equation}
This completes the proof.
\end{proof}

The following corollary proves Theorem \ref{uniquefix}.

\begin{corollary}[Fixpoint] \label{fixpoint} In the setting of Lemma \ref{contract}, the map $\Phi_\beta: \mathcal P_p \rightarrow \mathcal P_p$ has a unique fixpoint ${\eta}^* \in \mathcal P_p$, i.e. $\Phi_\beta({\eta}^*)={\eta}^* $, and $\Phi_{\beta}^n(\eta) \rightarrow {\eta}^* $ in $(\mathcal P_p, W_p)$, as $n \rightarrow \infty$, for all $\eta \in \mathcal P_{p}$.
\end{corollary}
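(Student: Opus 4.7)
The plan is to apply Lemma \ref{contrac} (the contraction method of Aldous--Bandyopadhyay) directly, with $\mathcal P=\mathcal P_p$ and $d_{\mathcal P}=W_p$. Three things need to be checked: (a) $(\mathcal P_p,W_p)$ is a complete metric space; (b) $\Phi_\beta(\mathcal P_p)\subset \mathcal P_p$; and (c) $\Phi_\beta$ is a strict $W_p$-contraction on $\mathcal P_p$. Once all three are in place, Lemma \ref{contrac} immediately yields the existence of a unique fixpoint $\eta^*\in\mathcal P_p$ with $\Phi_\beta^n(\eta)\to\eta^*$ in $(\mathcal P_p,W_p)$ for every $\eta\in\mathcal P_p$.

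Completeness of $(\mathcal P_p,W_p)$ in (a) is the standard fact recorded at the end of Section~\ref{rtp}, obtained from the completeness of $(\mathbb T,d_{\rm GH})$ proved in Proposition \ref{sepcompt}. The contraction estimate (c) is exactly the content of Lemma \ref{contract}, with contraction constant $(\sum_{i\ge 1}\mathbb E[P_i^{p\beta}])^{1/p}<1$ under the standing hypothesis $\mathbb E[\sum_{j\ge 1}P_j^{p\beta}]<1$.

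The one point that still requires a word is (b), invariance of $\mathcal P_p$ under $\Phi_\beta$. Here I would simply reuse the first computation in the proof of Lemma \ref{contract}: for $\xi\sim\nu$ and i.i.d.\ $(\tau_i)_{i\ge 1}$ with $\tau_1\sim\eta\in\mathcal P_p$ independent of $\xi$, the height of $\phi_\beta(\xi,\tau_i,i\ge 1)$ is bounded by $L+\sup_{i\ge 1}P_i^\beta\,{\rm ht}(\tau_i)$, so
\begin{equation*}
\mathbb E\left[\left({\rm ht}\left(\phi_\beta\left(\xi,\tau_i,i\ge 1\right)\right)\right)^p\right]
\le 2^p\mathbb E[L^p]+2^p\left(\sum_{i\ge 1}\mathbb E\left[P_i^{p\beta}\right]\right)\mathbb E\left[\left({\rm ht}(\tau_1)\right)^p\right]<\infty,
\end{equation*}
using $\mathbb E[L^p]<\infty$, $\mathbb E[\sum_{j\ge 1}P_j^{p\beta}]<\infty$, and $\eta\in\mathcal P_p$. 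Hence $\Phi_\beta(\eta)\in\mathcal P_p$. This also gives measurability of the construction on $\mathcal P_p$, which is guaranteed by Proposition \ref{meas}.

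I do not anticipate any genuine obstacle; the corollary is essentially a bookkeeping exercise, reading off the conclusion of the Banach-type statement in Lemma \ref{contrac} from the contraction estimate of Lemma \ref{contract}. The only subtlety worth highlighting is ensuring $\Phi_\beta$ stays inside $\mathcal P_p$ rather than escaping to $\mathcal P(\mathbb T)$, for which the elementary $p$th moment bound above suffices.
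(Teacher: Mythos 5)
Your proof is correct and follows essentially the same route as the paper: both invoke the completeness of $(\mathcal P_p,W_p)$, the contraction estimate of Lemma \ref{contract}, and then Lemma \ref{contrac} (equivalently, Banach's fixed point theorem). The only difference is that you re-derive the invariance $\Phi_\beta(\mathcal P_p)\subset\mathcal P_p$ explicitly, whereas the paper takes this as already part of the statement (and proof) of Lemma \ref{contract}; this is a presentational point, not a different argument.
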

\begin{proof} Recall \cite{W1} that $W_p$ is a complete metric on $\mathcal P_{p}$ for any $1 \leq p < \infty$. Furthermore, $\Phi_\beta$ is a strict contraction by Lemma \ref{contract}. Hence, we conclude by Lemma \ref{contrac}, or directly by Banach's fixpoint theorem, that $\Phi_\beta \colon \mathcal P_{p} \rightarrow \mathcal P_{p}$ has a unique fixpoint ${\eta}^* \in \mathcal P_{p}$ such that $\Phi_{\beta}^{n}(\eta) \rightarrow {\eta}^*$ as $n \rightarrow \infty$ for all $\eta \in \mathcal P_{p}$. 
\end{proof}

\begin{corollary}[Moments]\label{heightmom} In the setting of Lemmas \ref{contract} and \ref{fixpoint}, a random $\mathbb R$-tree $\check{\mathcal T} \sim \eta^*$ has finite moments $\mathbb E[{\rm ht}(\check{\mathcal T})^p] < \infty$ for all $p<p^*=\sup\{p \geq 1\colon\bE[\sum_{i \geq 1}(P_i)^{p\beta}]<1\mbox{ and }\mathbb E[L^p]<\infty\}$.
\end{corollary}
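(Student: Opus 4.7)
My plan is to leverage the uniqueness part of Corollary \ref{fixpoint} at several exponents in order to show that the fixpoint does not depend on which admissible value of $p$ is used, and then to deduce the moment bound by a single application of Jensen's inequality. Write $S=\{p\ge 1\colon \mathbb E[L^p]<\infty\text{ and }\mathbb E[\sum_{j\ge 1}P_j^{p\beta}]<1\}$, so that $S$ is non-empty by the standing hypothesis of Lemma \ref{contract} and $p^*=\sup S$.

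The key observation is that for every $p\in S$, Corollary \ref{fixpoint} supplies a unique fixpoint $\eta_p^*\in\mathcal P_p$, and these fixpoints coincide as $p$ ranges over $S$. Indeed, whenever $p_1\le p_2$ both lie in $S$, Jensen's inequality on the probability space yields $\mathcal P_{p_2}\subset\mathcal P_{p_1}$ (since $\mathbb E[X^{p_1}]\le\mathbb E[X^{p_2}]^{p_1/p_2}$ for $X\ge 0$), so $\eta_{p_2}^*$ also lies in $\mathcal P_{p_1}$ and continues to satisfy $\Phi_\beta(\eta_{p_2}^*)=\eta_{p_2}^*$ there. The uniqueness assertion of Corollary \ref{fixpoint} at the exponent $p_1$ then forces $\eta_{p_1}^*=\eta_{p_2}^*$. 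Denoting the common fixpoint by $\eta^*$, we conclude that $\eta^*\in\mathcal P_p$ for every $p\in S$ simultaneously; equivalently, $\mathbb E[{\rm ht}(\check{\mathcal T})^p]<\infty$ whenever $p\in S$.

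To finish, suppose $q<p^*$ is arbitrary. By the very definition of the supremum, there exists some $p\in S$ with $p\ge q$. The previous step yields $\mathbb E[{\rm ht}(\check{\mathcal T})^p]<\infty$, and then one more application of Jensen's inequality on a probability space gives $\mathbb E[{\rm ht}(\check{\mathcal T})^q]\le\mathbb E[{\rm ht}(\check{\mathcal T})^p]^{q/p}<\infty$, as required. There is no substantive obstacle in this argument; the only step warranting care is the coincidence of the fixpoints across different exponents, which is immediate from the nested structure of the spaces $\mathcal P_p$ together with the uniqueness already contained in Corollary \ref{fixpoint}.
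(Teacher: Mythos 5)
Your proposal is correct and takes essentially the same approach as the paper, which simply records that $\eta^*$ lies in $\bigcap_{p\in S}\mathcal P_p$ "as a direct consequence of Lemma \ref{contract}," leaving implicit exactly the two points you make explicit: that the fixpoints furnished by Corollary \ref{fixpoint} at different admissible exponents coincide because the spaces $\mathcal P_p$ are nested and the fixpoint equation does not depend on the ambient space, and that Jensen's inequality then propagates finiteness of moments down to all orders below $p^*$.
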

\begin{proof} Note that, as a direct consequence of Lemma \ref{contract}, $\eta^* \in \bigcap_{p\ge 1\colon\bE[\sum_{i \geq 1}(P_i)^{p\beta}]<\infty,\bE[L^p]<\infty} \mathcal P_p$.
\end{proof}

In Lemma \ref{contract}, we assume that there exists $p \geq 1$ such that $\mathbb E[\sum_{i \geq 1} P_i^{p\beta}] < 1$. In the case of a $\Xi_{\rm s}$-valued random 
string of beads or a $\Xi_{\rm g}$-valued random generalised string $([0,L],(X_i)_{i\ge 1},(P_i)_{i\ge 1},\Lambda)$, this condition holds if (and only if, when
$\Lambda=0$) we have $p>1/\beta$:
\begin{equation}
\sum_{i\ge 1} \mathbb E\left[ \left(P_i\right)^{p \beta}\right] < \sum_{i\ge 1}\bE[P_i]=\bE\left[\sum_{i\ge 1} P_i\right]\le 1, \label{pbetha}
\end{equation}
so the condition in Lemma \ref{contract} boils down to $\mathbb E[L^p]<\infty$ for some $p\ge 1$ with $p>1/\beta$. In particular, in this case, ${\rm ht}(\check{\mathcal T})$ has moments of all orders if and only if $L$ has moments of all orders.


Let us turn to the recursive construction of a random $\mathbb{R}$-tree with the fixpoint distribution $\eta^*$ from a recursive tree framework $((\xi_{\mathbf i},\mathbf{i}\in\mathbb U),\phi_\beta)$ where the $\xi_{\mathbf i}$ are independent copies of $\xi=([0,L],(X_i)_{i\ge 1},(P_i)_{i\ge 1})$.  
In Corollary \ref{recconcrt}, we want to construct a sequence of trees $(\check{\mathcal T}_n, n \geq 0)$ by successively ``replacing the atoms'' on  
$\check{\mathcal T}_n$ by rescaled $\xi_{\mathbf i}$. The a.s.\ Gromov-Hausdorff limit $\check{\mathcal T}$ of this sequence is then identified as  having the 
fixpoint distribution $\eta^*$ and is fully determined by  $(\xi_{\mathbf i}, \mathbf{i} \in \mathbb U)$, a property called \em endogeny \em in \cite{14}.

The following result restates formally the above construction and establishes Theorem \ref{constr3} of the introduction.

\begin{prop}[Recursive construction] \label{recconcrt} Let $p\ge 1$ and $([0,L], (X_i)_{i \geq 1}, (P_i)_{i \geq 1})\in \Xi$ with some distribution $\nu$ such that 
  $\bE[L^p]<\infty$ and $\mathbb E[\sum_{j \geq 1}P_j^{p\beta}] < 1$. We construct an increasing sequence of random 
  $\mathbb R$-trees $(\check{\mathcal T}_n)_{n \geq 0}$,  as follows. 
  Let $\xi_{\mathbf i}=([0,L_{\mathbf{i}}], (X_{\mathbf{i}j})_{j \geq 1},  (P_{\mathbf{i}j})_{j \geq 1})$, $\mathbf{i} \in \mathbb U$, be independent copies of $\xi$.  
  For $n \geq 0$, set $\check{\mathcal T}_n:=\tau_\varnothing^{(n)}$, where
  $(\tau^{(n)}_{\mathbf i}, \mathbf{i} \in \bigcup_{k=0}^{n} \mathbb N^{k} )$ is a sequence of trees defined recursively by 
  \begin{equation} \tau^{(n)}_{{\mathbf{i}}}:=
    \begin{cases} [0,L_{\mathbf i}], & \mathbf{i} \in \mathbb N^{n}, \\
				  \phi_\beta\left(\xi_{\mathbf{i}}, \left(\tau^{(n)}_{\mathbf{i} j} , j \geq 1\right)\right) & \mathbf{i} \in \mathbb N^{m}, n-1\geq m \geq 0. 
    \end{cases}\label{taucon}  
  \end{equation} 
  Then we have the convergence
  \begin{equation} \lim \limits_{n \rightarrow \infty} d_{\rm GH}\left(\check{\mathcal T}_n, \check{\mathcal T} \right)=0 \quad \text{ a.s. } 
  \end{equation}
  where the limiting random ${\mathbb R}$-tree $\check{\mathcal T}$ has the fixpoint distribution ${\eta}^*$ associated with the RTF
  $(\xi_{\mathbf i}, \mathbf{i} \in \mathbb U)$.
\end{prop}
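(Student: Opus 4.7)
My strategy is to exhibit consistent embeddings of the trees $\check{\mathcal{T}}_n$ into $l^1(\mathbb{U})$ which are nested, $\check{\mathcal{T}}_n\subseteq\check{\mathcal{T}}_{n+1}$, then bound the Hausdorff distances between consecutive embedded trees using the contraction mechanism behind Lemma \ref{contract}, and finally identify the limit law via Corollary \ref{fixpoint}.

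First, I would use the indexing of generations by $\mathbb U$ to define recursively an embedding $\iota_n\colon\check{\mathcal{T}}_n\hookrightarrow l^1(\mathbb U)$: place the spine $[0,L_\varnothing]$ along the basis vector $e_\varnothing$, and at each atom $X_i$ graft the tree constructed from $\xi_i$ (rescaled by $P_i^\beta$) starting in the fresh direction $e_i$; continue recursively so that the contribution of $\xi_{\mathbf i}$ uses coordinates $e_{\mathbf i j}$, $j\ge 1$. This puts each $\iota_n(\check{\mathcal{T}}_n)\in\mathbb T^{\rm emb}$, and since passing from $\check{\mathcal{T}}_n$ to $\check{\mathcal{T}}_{n+1}$ only grafts new branches at the endpoints of the depth-$n$ leaf intervals $[0,L_{\mathbf i}]$ without disturbing anything else, $\iota_n(\check{\mathcal{T}}_n)\subseteq\iota_{n+1}(\check{\mathcal{T}}_{n+1})$ by construction.

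Second, I would prove the key Hausdorff bound: by induction on the depth, since attaching $p_i^\beta\tau_i$ at an atom contributes at most $p_i^\beta\,\delta_{\rm H}(\tau_i,\tau_i^\prime)$ to the distance between the two parent trees,
\[
\delta_{\rm H}\bigl(\iota_n(\check{\mathcal{T}}_n),\iota_{n+1}(\check{\mathcal{T}}_{n+1})\bigr)\;\le\;\sup_{\mathbf i\in\mathbb N^{n+1}}L_{\mathbf i}\prod_{k=1}^{n+1}P_{\mathbf i|k}^{\beta}.
\]
Bounding the supremum by the sum, raising to the $p$-th power, and exploiting the independence of the $\xi_{\mathbf i}$ (as in the proof of Lemma \ref{contract}) yields
\[
\mathbb E\bigl[\delta_{\rm H}\bigl(\iota_n(\check{\mathcal{T}}_n),\iota_{n+1}(\check{\mathcal{T}}_{n+1})\bigr)^p\bigr]\;\le\;\mathbb E[L^p]\cdot\Bigl(\mathbb E\Bigl[\sum_{i\ge 1}P_i^{p\beta}\Bigr]\Bigr)^{n+1},
\]
which is geometrically summable since $\mathbb E[\sum_{i\ge 1}P_i^{p\beta}]<1$.

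Third, this summability forces $\sum_n\delta_{\rm H}(\iota_n(\check{\mathcal{T}}_n),\iota_{n+1}(\check{\mathcal{T}}_{n+1}))<\infty$ almost surely, so the nested, compact sets $\iota_n(\check{\mathcal{T}}_n)$ have a limit $\check{\mathcal{T}}:=\overline{\bigcup_n\iota_n(\check{\mathcal{T}}_n)}$, which is compact (as a Cauchy limit of compact sets in the Hausdorff metric). Proposition \ref{embprop}(ii) then gives $d_{\rm GH}(\check{\mathcal{T}}_n,\check{\mathcal{T}})\le\delta_{\rm H}(\iota_n(\check{\mathcal{T}}_n),\check{\mathcal{T}})\to 0$ a.s. Finally, the recursion \eqref{taucon} and independence of the $\xi_{\mathbf i}$ imply $\check{\mathcal{T}}_n\sim\Phi_\beta^n(\eta_0)$, where $\eta_0$ is the law of $[0,L]\in\mathcal P_p$; by Corollary \ref{fixpoint}, $\Phi_\beta^n(\eta_0)\to\eta^*$ in $(\mathcal P_p,W_p)$, hence in distribution, identifying the law of $\check{\mathcal{T}}$ as $\eta^*$. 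The main technical hurdle is the bookkeeping of the nested embedding and the verification of the recursive Hausdorff inequality through all generations; once these are in place, the moment bound and identification of the limit are routine consequences of the contraction already established.
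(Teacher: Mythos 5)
Your proposal is correct and follows essentially the same strategy as the paper: couple the $\check{\mathcal T}_n$ via the RTF so that consecutive trees are nested in a common metric space, bound the expected ($p$th-power) Hausdorff distances using the same telescoping product $\check P_{\mathbf i}$ and the assumption $\mathbb E[\sum_j P_j^{p\beta}]<1$, conclude almost-sure Cauchyness, and identify the limit law via Corollary~\ref{fixpoint}. The one small technical divergence is how you pass from the moment bound to almost-sure convergence: you only bound \emph{consecutive} Hausdorff increments (so your constant is simply $\mathbb E[L^p]$) and sum these in $L^1$ via Jensen ($p\ge1$), whereas the paper bounds $\mathbb E[d_{\rm GH}(\check{\mathcal T}_m,\check{\mathcal T}_n)^p]$ over arbitrary gaps $n>m$ — which forces it to introduce the constant $M=\sup_n\mathbb E[{\rm ht}(\check{\mathcal T}_n)^p]$ from the Wasserstein convergence — and then applies Markov's inequality together with Borel--Cantelli, using the monotonicity of $d_{\rm GH}(\check{\mathcal T}_m,\check{\mathcal T}_n)$ in $n$. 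Your version is slightly cleaner in this step. The paper also happens to phrase the coupling using disjoint half-open intervals rather than your explicit $l^1(\mathbb U)$ embedding, but it immediately performs exactly that embedding in the follow-up Corollary~\ref{recconcrt2}, so this is presentational. One very minor slip: you say the new branches are grafted ``at the endpoints of the depth-$n$ leaf intervals''; they are grafted at the atom locations $X_{\mathbf i j}\in[0,L_{\mathbf i}]$, not at the endpoints, though this does not affect the nesting or the Hausdorff bound.
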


\begin{proof} Denote by $\eta_0$ the distribution of (the equivalence class) of a one-branch $\mathbb R$-tree of length $L$. By construction, $\tau_{\mathbf{i}}^{(n)}\sim\eta_0$ for all $\mathbf{i}\in\mathbb N^n$, $n\ge 0$. Recursively, we see that $\tau_{\mathbf{i}}^{(n)}\sim\eta_{n-m}$ for all $\mathbf{i}\in\mathbb N^m$, $n\ge m\ge 0$, where $\eta_m=\Phi_\beta^m(\eta_0)$. In particular $\check{\mathcal T}_n=\tau_\varnothing^{(n)}\sim\eta_n\rightarrow\eta^*$
since $\eta^*$ is the unique attractive fixpoint of Corollary \ref{fixpoint}. Since Wasserstein convergence implies the convergence of moments of tree heights, we
have $M:=\sup_{n\ge 0}\mathbb E[{\rm ht}(\check{\mathcal T}_n)^p]<\infty$.

On the other hand, the sequence $(\check{\mathcal T}_n, n \geq 0)$ has been coupled via the underlying RTF $(\xi_{\mathbf i}, \mathbf{i} \in \mathbb U)$. More precisely, $\check{\mathcal T}_n \subset \check{\mathcal T}_{n+1}$ for all $n \geq 0$,
if we represent the strings of beads $\xi_{\mathbf i}$ as disjoint (half-open) intervals $E_{\mathbf i}$, $\mathbf i\in\mathbb{U}$, and define a metric on their disjoint 
union $\check{\mathcal T}_n=[0,L_\varnothing]\sqcup\bigsqcup_{\mathbf{i}\in\bigcup_{m=1}^n\mathbb N^m}E_{\mathbf i}$ that captures the repeated scaling of subtrees via $\phi_\beta$ in (\ref{taucon}). Specifically,
the scaling factor for $E_{\mathbf{i}}$, when directly attached to $X_{{\mathbf{i}}}\in\check{\mathcal T}_n$ is
\begin{equation} \check{P}_{\mathbf{i}} = P_{i_1} P_{i_1i_2} \cdots P_{ i_1i_2\ldots i_{n+1}},\quad \mathbf{i}=i_1i_2\ldots i_{n+1} \in \mathbb N^{n+1}, \quad n \geq 0.\label{pcheck} \end{equation}
We note that e.g. $E_{\mathbf i}\subset\check{\mathcal T}_{n+1}\setminus\check{\mathcal T}_n$, $\mathbf i\in\mathbb{N}^{n+1}$, appears unscaled in $\tau^{(n+1)}_{\mathbf{i}}$,
scaled by $P_{\mathbf{i}}^\beta$ in $\tau^{(n+1)}_{i_1\ldots i_n}\supset\tau^{(n)}_{i_1\ldots i_n}$ and, inductively, scaled by $\check{P}_{\mathbf{i}}^\beta$ in 
$\check{\mathcal T}_{n+1}=\tau^{(n+1)}_\varnothing\supset\tau^{(n)}_\varnothing=\check{\mathcal T}_n$, always attached at $X_{\mathbf i}\in E_{i_1\ldots i_n}\subset\check{\mathcal T}_n\subset\check{\mathcal T}_{n+1}$.

To deduce the almost sure convergence of $(\check{\mathcal T}_n)_{n\ge 1}$, we could now apply general results \cite[Theorems 3.3 and 3.9]{DuWi2} for sequences of random $\mathbb{R}$-trees that are increasing with respect to a partial order $\preceq$ on $(\mathbb{T},d_{\rm GH})$ that captures the inclusion property of suitable representatives. However, a direct 
argument is straightforward and follows on nicely from the proof of our Lemma \ref{contract}. For all $n>m\ge 0$
\begin{align*}  \mathbb E\left[ d_{\rm GH} \left( \check{\mathcal T}_m, \check{\mathcal T}_n\right)^{p}\right] \leq \mathbb E \left[\left(\sup_{\mathbf{i} \in \mathbb N^{m}, j \geq 1}   \check{P}_{\mathbf{i}j}^\beta {\rm ht}(\tau_{\mathbf{i}j}^{(n)}) \right)^{p\,}\right] 
&\leq \sum_{\mathbf{i} \in \mathbb N^{m}}\mathbb E \left[\sum_{j\geq1} \check{P}_{\mathbf{i}j}^{p\beta} \right] \mathbb E \left[{\rm ht}( \check{\mathcal T}_{n-m-1})^{p}\right]. \end{align*}
Note that, for each $\mathbf{i} \in \mathbb U$, the sequence $(\check{P}_{\mathbf{i}j}, j \geq 1)$ can be represented as 
\begin{equation} \left(\check{P}_{\mathbf{i}j}, j \geq 1\right) = \left(\check{P}_{\mathbf{i}}  \left({P}_{\mathbf{i}j}, j \geq 1\right)\right) \label{PDsplit} \end{equation}
where $({P}_{\mathbf{i}j}, j \geq 1)$ is independent of $\check{P}_{\mathbf{i}}$, $\mathbf{i} \in \mathbb N^{m}$, and has the same distribution as the ranked atom masses $(P_j, j \geq 1)$ of the initial $\xi_{\varnothing}$. Therefore, 
\begin{align*} \mathbb E\left[ d_{\rm GH} \left( \check{\mathcal T}_m,  \check{\mathcal T}_n \right)^p\right]  &\leq\mathbb E \left[{\rm ht}( \check{\mathcal T}_{n-m-1})^{p}\right] \mathbb E \left[  \sum_{j \geq 1} P_j^{p\beta} \right] \mathbb E \left[ \sum_{\mathbf{i} \in \mathbb N^{m}} \check{P}_{\mathbf{i}}^{p\beta}\right].
\end{align*}
Applying \eqref{PDsplit} recursively, we obtain
\begin{align} \mathbb E\left[ d_{\rm GH} \left( \check{\mathcal T}_m, \check{\mathcal T}_n \right)^p\right]  &\leq\mathbb E \left[ {\rm ht}\left( \check{\mathcal T}_{n-m-1}\right)^{p}\right] \left(\mathbb E \left[  \sum_{j \geq 1} P_j^{p\beta} \right]\right)^{m+1}\leq M\left(\mathbb E \left[  \sum_{j \geq 1} P_j^{p\beta} \right]\right)^{m+1}. \label{bound}
\end{align}
We have $\mathbb E \left[  \sum_{j \geq 1} P_j^{p\beta }\right]<1$ and $d_{\rm GH}(\check{\mathcal T}_m,\check{\mathcal T}_n)$ increasing in $n\ge m+1$ for fixed $m\ge 0$, so
\begin{align*}
\mathbb P\left(d_{\rm GH}\left(\check{\mathcal T}_m,\check{\mathcal T}_n\right)>\epsilon\text{ for any }n\ge m+1\right)&=
\sup_{n\ge m+1}\mathbb P \left( d_{\rm GH}\left(\check{\mathcal T}_m,  \check{\mathcal T}_n\right) > \epsilon\right)\\ &\leq \sup_{n\ge m+1}\epsilon^{-p} \mathbb E\left[ d_{\rm GH} \left( \check{\mathcal T}_m, \check{\mathcal T}_n \right)^p \right] \leq M \epsilon^{-p} \left(\mathbb E \left[  \sum_{j \geq 1} P_j^{p\beta} \right]\right)^{m+1},  
\end{align*}
and hence, by the first Borel-Cantelli Lemma, we conclude
\begin{equation}
\mathbb P \left( d_{{\rm GH}}\left(\check{\mathcal T}_m, \check{\mathcal T}_n \right) > \epsilon \text{ for any }n\ge m+1\text{, for infinitely many } m \right) =0.
\end{equation}
Consequently, $(\check{\mathcal T}_n)_{n\ge 0}$ is $d_{\rm GH}$-Cauchy a.s., so $\lim_{n \rightarrow \infty} \check{\mathcal T}_n =  \check{\mathcal T}$ exists a.s. in the Gromov-Hausdorff topology.
\end{proof}

\subsection{Construction of mass measures and the Gromov-Hausdorff-Prokhorov limits} \label{resultsstrings}

The methods we use are robust to changes of formalism. In the previous two sections, we worked in $(\mathbb T,d_{\rm GH})$, while the same arguments actually 
work in $(\mathbb T^{\rm emb},\delta_{\rm H})$, provided that we define appropriate $\phi_\beta^{\rm emb}$ making explicit use of the tree structure of $\mathbb U$, or, 
with some restrictions, even on suitable spaces of real-valued excursions, see e.g.\ \cite{LG1,6,Duqcod}, provided that we represent strings of beads and generalised
strings accordingly. Let us not go into details here, but let us rephrase Proposition \ref{recconcrt} as a construction in $(\mathbb T^{\rm emb},\delta_{\rm H})$. This
will be useful when adding mass measures to our construction.

We need some notation that allows us to place a string $\xi_{\mathbf{i}}$ parallel to the $\mathbf{i}$th coordinate direction. Denote by $e_{\mathbf{i}}$ the unit vector in $l^1(\mathbb U)$ associated with coordinate $\mathbf{i}\in\mathbb{U}$, and by $\theta_j\colon l^1(\mathbb{U})\rightarrow l^1(\mathbb{U})$, for $j\in\mathbb N$, the coordinate shift operator
$$\theta_j\left((s_{\mathbf{i}})_{\mathbf{i}\in\mathbb{U}}\right)=\theta_j\left(\sum_{\mathbf{i}\in\mathbb{U}}s_{\mathbf{i}}e_{\mathbf{i}}\right)=\sum_{\mathbf{i}\in\mathbb{U}}s_{\mathbf{i}}\theta_j(e_{\mathbf{i}})=\sum_{\mathbf{i}\in\mathbb{U}}s_{\mathbf{i}}e_{j\mathbf{i}},$$
which we also apply element by element to subsets of $l^1(\mathbb U)$, specifically to $\mathbb{R}$-trees in $\mathbb{T}^{\rm emb}$. We denote by $\pi\colon\mathbb{T}^{\rm emb}\rightarrow\mathbb T$ the natural projection of an
$\mathbb R$-tree in $\mathbb{T}^{\rm emb}$ onto its equivalence class in $\mathbb{T}$.

\begin{corollary}[Recursive construction] \label{recconcrt2} Let $(\xi_{\mathbf i},\mathbf{i}\in\mathbb{U})$ be as in Proposition \ref{recconcrt}. We construct an
  increasing sequence of $\mathbb{T}^{\rm emb}$-valued random $\mathbb R$-trees $(\check{\mathcal T}_n^{\rm emb})_{n \geq 0}$, as follows. For $n \geq 0$, set 
  $\check{\mathcal T}_n^{\rm emb}:=\tau_\varnothing^{(n)}$, where $(\tau^{(n)}_{\mathbf i}, \mathbf{i} \in \bigcup_{k=0}^{n} \mathbb N^{k} )$ is a sequence of trees 
  defined recursively by 
  \begin{equation} \tau^{(n)}_{{\mathbf{i}}}:=
    \begin{cases} [0,L_{\mathbf i}]e_{\varnothing}, & \mathbf{i} \in \mathbb N^{n}, \\
				  [0,L_{\mathbf i}]e_{\varnothing}\cup\bigcup_{j\ge 1}\left(X_{\mathbf{i}j}e_\varnothing+\theta_j\left(P_{\mathbf{i}j}^\beta\tau^{(n)}_{\mathbf{i} j}\right)\right) & \mathbf{i} \in \mathbb N^{m}, n-1\geq m \geq 0. 
    \end{cases}\label{taucon2}  
  \end{equation} 
  Then we have the convergence
  \begin{equation*} \lim \limits_{n \rightarrow \infty} \delta_{\rm H}\left(\check{\mathcal T}_n^{\rm emb}, \check{\mathcal T}^{\rm emb} \right)=0 \quad \text{ a.s. } 
  \end{equation*}
  where the limiting random ${\mathbb R}$-tree $\check{\mathcal T}^{\rm emb}$ is such that $\pi(\check{\mathcal T}^{\rm emb})$ has the fixpoint distribution $\eta^*$. 
\end{corollary}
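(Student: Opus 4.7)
The plan is to carry the proof of Proposition \ref{recconcrt} over to the embedded setting $(\mathbb{T}^{\rm emb},\delta_{\rm H})$, exploiting that the coordinate-shift construction in $l^1(\mathbb{U})$ yields a concrete representative of the abstract equivalence class whose ambient $l^1$ metric coincides with the tree metric defined in (\ref{attach1})--(\ref{attach2}). Given this identification, the a.s.\ convergence follows from a moment estimate on $\delta_{\rm H}$ that is parallel to the one for $d_{\rm GH}$, while the distribution of the limit is obtained by projecting with $\pi$.

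First I would establish the nestedness $\check{\mathcal T}_n^{\rm emb}\subseteq\check{\mathcal T}_{n+1}^{\rm emb}$ by downward induction on the depth $m$. At the deepest level $\mathbf{i}\in\mathbb N^n$, $\tau_{\mathbf{i}}^{(n)}=[0,L_{\mathbf{i}}]e_\varnothing$ is exactly the first summand in $\tau_{\mathbf{i}}^{(n+1)}$. The inductive step uses that the grafting rule (\ref{taucon2}) is monotone in each of its subtree arguments, since $\theta_j$ is an additive coordinate shift. This yields in particular $\tau_\varnothing^{(n)}\subseteq\tau_\varnothing^{(n+1)}$.

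Second, iterating (\ref{taucon2}) and using that the coordinates of $l^1(\mathbb{U})$ are orthogonal in the $l^1$ sense, the segment $[0,L_{\mathbf{i}}]e_\varnothing$ at position $\mathbf{i}\in\mathbb N^m$ enters $\check{\mathcal T}_n^{\rm emb}$ translated and scaled globally by $\check{P}_{\mathbf{i}}^\beta$ as in (\ref{pcheck}), and the induced $l^1$ metric on the image agrees with the tree metric prescribed by (\ref{attach2}). By nestedness,
\begin{equation*}
\delta_{\rm H}\bigl(\check{\mathcal T}_m^{\rm emb},\check{\mathcal T}_n^{\rm emb}\bigr)\le\sup_{\mathbf{i}\in\mathbb N^m,\,j\ge 1}\check{P}_{\mathbf{i}j}^\beta\,{\rm ht}\bigl(\tau_{\mathbf{i}j}^{(n)}\bigr),
\end{equation*}
which is precisely the bound controlling $d_{\rm GH}(\check{\mathcal T}_m,\check{\mathcal T}_n)$ in the proof of Proposition \ref{recconcrt}. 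The moment computation with the factorisation (\ref{PDsplit}) then gives $\mathbb E[\delta_{\rm H}(\check{\mathcal T}_m^{\rm emb},\check{\mathcal T}_n^{\rm emb})^p]\le M(\mathbb E[\sum_{j\ge 1}P_j^{p\beta}])^{m+1}$ with $M=\sup_{n\ge 0}\mathbb E[{\rm ht}(\check{\mathcal T}_n)^p]<\infty$. A Borel--Cantelli argument identical to the one in Proposition \ref{recconcrt} shows that $(\check{\mathcal T}_n^{\rm emb})_{n\ge 0}$ is $\delta_{\rm H}$-Cauchy a.s., and completeness of $(\mathbb{T}^{\rm emb},\delta_{\rm H})$ from Proposition \ref{embprop}(i) produces the limit $\check{\mathcal T}^{\rm emb}$.

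Finally, to identify $\pi(\check{\mathcal T}^{\rm emb})\sim\eta^*$, note that by the isometry in step two we have $\pi(\check{\mathcal T}_n^{\rm emb})=\check{\mathcal T}_n$ a.s.\ in $\mathbb T$ for each $n\ge 0$, since both sides are the equivalence class of the same rooted compact $\mathbb R$-tree built from $(\xi_{\mathbf{i}},\mathbf{i}\in\mathbb{U})$. The bound $d_{\rm GH}\le\delta_{\rm H}$ from Proposition \ref{embprop}(ii) makes $\pi$ continuous, hence a.s.\ $\check{\mathcal T}_n=\pi(\check{\mathcal T}_n^{\rm emb})\to\pi(\check{\mathcal T}^{\rm emb})$ in $(\mathbb{T},d_{\rm GH})$; by Proposition \ref{recconcrt} this limit equals $\check{\mathcal T}\sim\eta^*$. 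The main obstacle to the argument is the isometry claim used in step two: one needs to check that iterating the attachment construction through the orthogonal coordinate shifts $\theta_j$ reproduces the abstract tree metric (\ref{attach2}) rather than some inflated ambient metric. This holds because the $l^1$-geodesic between two points in distinct attached branches must pass through the common attachment point, and the $l^1$-lengths along each orthogonal stretch add exactly in the manner prescribed by (\ref{attach2}).
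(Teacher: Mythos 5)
Your proposal is correct and takes essentially the same approach the paper intends: the published proof of Corollary~\ref{recconcrt2} simply observes that the embedded recursion~\eqref{taucon2} must be checked to be consistent with~\eqref{taucon} and leaves the details to the reader, and your argument supplies exactly those details by establishing nestedness, verifying that the ambient $l^1$-distance on the coordinate-shifted branches reproduces the tree metric of~\eqref{attach1}--\eqref{attach2}, rerunning the moment and Borel--Cantelli estimates from Proposition~\ref{recconcrt} with $\delta_{\rm H}$ in place of $d_{\rm GH}$, and identifying the limit via the $1$-Lipschitz projection $\pi$. The one point worth keeping in mind is that the inequality $\mathbb E[d_{\rm GH}(\check{\mathcal T}_m,\check{\mathcal T}_n)^p]\le\mathbb E[(\sup_{\mathbf i\in\mathbb N^m,j\ge 1}\check P_{\mathbf i j}^\beta\,{\rm ht}(\tau_{\mathbf i j}^{(n)}))^p]$ in the proof of Proposition~\ref{recconcrt} was already implicitly a Hausdorff-distance bound in a common embedding, so your $\delta_{\rm H}$ version is not a strengthening but the argument made explicit.
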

\begin{proof} The main point to check is that the second case of (\ref{taucon2}) is consistent with the application of $\phi_\beta$ in (\ref{taucon}). We leave the
  details to the reader.
\end{proof}

Let us now consider a $\Xi_{\rm g}$-valued random generalised string $\xi^\bullet=([0,L],(X_i)_{i\ge 1},(P_i)_{i\ge 1},\Lambda)$. We associate with $\xi^\bullet$ the
$\Xi$-valued random generalised string $\xi=([0,L],(X_i)_{i\ge 1},(P_i)_{i\ge 1})$ without the measure $\Lambda$ on $[0,L]$. Recall that the main restriction compared to 
Corollary \ref{recconcrt2} is that $\sum_{i \geq 1} P_i+ \Lambda([0,L])=1$ a.s. so that $\sum_{i\ge 1}P_i\delta_{X_i}+\Lambda$ is a probability measure on $[0,L]$. This 
is precisely what we need to show that the rescaled $n$th generation measures on $\check{\mathcal T}_n^{\rm emb}$ plus the untouched $\Lambda$-components of previous 
generations converge to a limiting probability measure on $\check{\mathcal{T}}^{\rm emb}$. Recall that (\ref{taucon2}) eventually places $[0,L_{\mathbf{i}}]$ parallel to 
$e_{\mathbf{i}}$. We will place the mass measure $\sum_{j\ge 1}P_{\mathbf{i}j}\delta_{X_{\mathbf{i}j}}+\Lambda_{\mathbf{i}}$ accordingly. Specifically, recursive scaling
by $P_{\mathbf{i}j}^\beta$ yields a total length scaling by $\check{P}_{\mathbf{i}}^\beta$ as defined in (\ref{pcheck}), which corresponds to recursive measure scaling by 
a total of $\check{P}_{\mathbf{i}}$, which preserves mass 1. Recursive shifting by $X_{\mathbf{i}j}$ places the beginning of the scaled $\xi_{\mathbf{i}}$ at 
\begin{equation}\label{Xcheck}\check{X}_{\mathbf{i}}=X_{i_1}e_\varnothing+P_{i_1}^\beta X_{i_1i_2}e_{i_1}+\cdots+P_{i_1}^\beta\cdots P_{i_1\ldots i_{m-2}}^\beta X_{\mathbf{i}}e_{i_1\ldots i_{m-1}}.
\end{equation}
Informally, we define the measures $\check{\mu}^{\rm emb}_n$ on $\check{\mathcal{T}}_n^{\rm emb}$, $n\ge 0$, by taking as $\check{\mu}_0^{\rm emb}$ the mass measure of $\xi_{\varnothing}^\bullet$ in direction $e_\varnothing$, and by building $\check{\mu}_{n+1}^{\rm emb}$ from $\check{\mu}_n^{\rm emb}$ by replacing each atom of size $\check{P}_{{\mathbf i}j}$ of $\check{\mu}_n^{\rm emb}$ by the rescaled mass measure of string $\xi_{\mathbf{i}j}^\bullet$ in direction $e_{\mathbf{i}j}$ starting from
$\check{X}_{{\mathbf i}j}$. In particular, the rescaled parts $\Lambda_{\mathbf{i}}$ of the mass measures are not replaced and remain parts of $\check{\mu}_n$ for all $n$.
The following result gives a formal definition of the mass measure $\check{\mu}^{\rm emb}$ on $\check{\mathcal{T}}^{\rm emb}$ as a limit of rescaled mass measures of the
$n$th generation strings with all $\Lambda$-measures from previous generations:



\begin{prop}[Mass measure on $ \check{\mathcal T}^{\rm emb}$] \label{mmreccon} Let $p\ge 1$, $\beta>0$, let $\xi^\bullet=([0,L],(X_i)_{i\ge 1},(P_i)_{i\ge 1},\Lambda)$ 
  be a $\Xi_{\rm g}$-valued random generalised string with $\mathbb{E}[L^p]<\infty$ and $\mathbb{E}[\sum_{j\ge 1}P_j^{p\beta}]<1$. Consider associated RTFs  
  $(\xi^\bullet_{\mathbf{i}},\mathbf{i}\in\mathbb{U})$, and $(\xi_{\mathbf{i}},\mathbf{i}\in\mathbb{U})$ without the measures $\Lambda_{\mathbf{i}}$. Let 
  $(\check{\mathcal{T}}^{\rm emb}_n,n\ge 0)$ and $\check{\mathcal{T}}^{\rm emb}$ be as in Corollary \ref{recconcrt2}. Set
  \begin{equation}\label{massmeas}
      \check{\mu}_{n}^{\rm emb}= \sum_{\mathbf{i}\in\mathbb N^{n},j\ge 1} \check{P}_{\mathbf{i}j}\delta_{\check{X}_{{\mathbf i}j}}
                                             + \sum_{m=0}^n \sum_{\mathbf{i}\in\mathbb N^m}\check{P}_{\mathbf{i}}\check{\Lambda}_{{\mathbf{i}}}\qquad\mbox{for all $n\ge 0$,}
  \end{equation}
  where $\check{\Lambda}_{\mathbf{i}}(\check{X}_{\mathbf{i}}+\check{P}_{\mathbf{i}}^\beta[a,b]e_{\mathbf{i}})=\Lambda_{\mathbf{i}}([a,b])$, $0\le a<b<\infty$, for all 
  $\mathbf{i}=i_1\ldots i_m\in\mathbb{U}$.        
  Then there is a random probability measure $\check{\mu}^{\rm emb}$ on $\check{\mathcal T}^{\rm emb}$ so that  
  \begin{equation}
    \lim_{n \rightarrow \infty} \delta_{\rm HP}\left(\left(\check{\mathcal T}_n^{\rm emb}, \check{\mu}_n^{\rm emb} \right), 
                                                     \left( \check{\mathcal T}^{\rm emb}, \check{\mu}^{\rm emb}\right) \right)= 0 \quad\text{a.s.}.
  \end{equation}
\end{prop}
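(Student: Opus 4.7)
My plan is to exploit the explicit $l^1(\mathbb{U})$-embedding of Corollary \ref{recconcrt2} to couple successive measures $\check{\mu}_n^{\rm emb}$ and $\check{\mu}_{n+1}^{\rm emb}$ directly, then mimic the Borel--Cantelli strategy used in the proof of Proposition \ref{recconcrt}.

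First, I would verify by induction on $n$ that each $\check{\mu}_n^{\rm emb}$ is a probability measure. The continuous part contributes $\sum_{m=0}^n\sum_{\mathbf{i}\in\mathbb N^m}\check{P}_{\mathbf{i}}\Lambda_{\mathbf{i}}([0,L_{\mathbf{i}}])$ and the atomic part contributes $\sum_{\mathbf{i}\in\mathbb N^n}\check{P}_{\mathbf{i}}(1-\Lambda_{\mathbf{i}}([0,L_{\mathbf{i}}]))$; using $\sum_{\mathbf{i}\in\mathbb N^n}\check{P}_{\mathbf{i}}=\sum_{\mathbf{i}\in\mathbb N^{n-1}}\check{P}_{\mathbf{i}}(1-\Lambda_{\mathbf{i}}([0,L_{\mathbf{i}}]))$ and telescoping gives total mass $\check{P}_\varnothing=1$. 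This also shows that $\check{\mu}_{n+1}^{\rm emb}$ is obtained from $\check{\mu}_n^{\rm emb}$ by leaving the fixed part $\sum_{m=0}^n\sum_{\mathbf{i}\in\mathbb N^m}\check{P}_{\mathbf{i}}\check{\Lambda}_{\mathbf{i}}$ untouched and replacing each atom $\check{P}_{\mathbf{i}j}\delta_{\check{X}_{\mathbf{i}j}}$ (with $\mathbf{i}j\in\mathbb N^{n+1}$) by the measure $\sum_{k\ge 1}\check{P}_{\mathbf{i}jk}\delta_{\check{X}_{\mathbf{i}jk}}+\check{P}_{\mathbf{i}j}\check{\Lambda}_{\mathbf{i}j}$ of the same total mass $\check{P}_{\mathbf{i}j}$, entirely supported within the embedded copy $\check{X}_{\mathbf{i}j}+\check{P}_{\mathbf{i}j}^\beta[0,L_{\mathbf{i}j}]e_{\mathbf{i}j}$ of the scaled string $\xi_{\mathbf{i}j}^\bullet$.

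Next, this mass-preserving replacement gives a natural transport plan in which every bit of mass moves by at most $\check{P}_{\mathbf{i}j}^\beta L_{\mathbf{i}j}$ under the $l^1$-metric; hence
\begin{equation*}
\delta_{\rm P}\bigl(\check{\mu}_n^{\rm emb},\check{\mu}_{n+1}^{\rm emb}\bigr)\le\varepsilon_{n+1}:=\sup_{\mathbf{i}\in\mathbb N^{n+1}}\check{P}_{\mathbf{i}}^{\beta}L_{\mathbf{i}}.
\end{equation*}
Using the multiplicative representation $\check{P}_{\mathbf{i}j}=\check{P}_{\mathbf{i}}P_{\mathbf{i}j}$ and the independence structure of the RTF, exactly as in \eqref{PDsplit} and the computation leading to \eqref{bound}, I would dominate the supremum by the sum of $p$-th powers and obtain
\begin{equation*}
\mathbb{E}[\varepsilon_{n+1}^p]\le\sum_{\mathbf{i}\in\mathbb N^{n+1}}\mathbb{E}\bigl[\check{P}_{\mathbf{i}}^{p\beta}\bigr]\mathbb{E}[L^p]=\mathbb{E}[L^p]\left(\mathbb{E}\Bigl[\sum_{j\ge 1}P_j^{p\beta}\Bigr]\right)^{n+1},
\end{equation*}
which is summable in $n$ by the hypothesis $\mathbb{E}[\sum_{j}P_j^{p\beta}]<1$. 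Markov's inequality and the first Borel--Cantelli lemma then yield $\sum_{n\ge 0}\varepsilon_n<\infty$ a.s., so $(\check{\mu}_n^{\rm emb})_{n\ge 0}$ is Cauchy in the Prokhorov metric almost surely.

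Finally, since $\check{\mathcal T}^{\rm emb}$ is compact a.s.\ and each $\check{\mu}_n^{\rm emb}$ is supported on $\check{\mathcal T}_n^{\rm emb}\subset\check{\mathcal T}^{\rm emb}$, the family $(\check{\mu}_n^{\rm emb})$ is tight, and completeness of the space of Borel probability measures on the compact metric space $\check{\mathcal T}^{\rm emb}$ under Prokhorov distance gives a limit $\check{\mu}^{\rm emb}$ supported on $\check{\mathcal T}^{\rm emb}$. Combining the a.s.\ convergence $\delta_{\rm P}(\check{\mu}_n^{\rm emb},\check{\mu}^{\rm emb})\to 0$ with the a.s.\ Hausdorff convergence $\delta_{\rm H}(\check{\mathcal T}_n^{\rm emb},\check{\mathcal T}^{\rm emb})\to 0$ from Corollary \ref{recconcrt2} yields $\delta_{\rm HP}((\check{\mathcal T}_n^{\rm emb},\check{\mu}_n^{\rm emb}),(\check{\mathcal T}^{\rm emb},\check{\mu}^{\rm emb}))\to 0$ a.s. The main care needed is in the bookkeeping of the coupling for step two, where one must check that the $\check{\Lambda}$-pieces from earlier generations are carried forward verbatim while only the $n$th-generation atoms are refined; once that is set up correctly, the rest is a direct transcription of the moment/Borel--Cantelli argument already developed in the proof of Proposition \ref{recconcrt}.
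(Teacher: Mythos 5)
Your proof is correct, but it takes a genuinely different route from the paper. The paper's proof follows Aldous's approach for the Brownian CRT: it shows that (a) total mass is preserved because $\xi^\bullet$ is $\Xi_{\rm g}$-valued, (b) the finite-dimensional projections $\pi_{\mathbf{i}^{(1)},\ldots,\mathbf{i}^{(r)}}\check{\mu}_n^{\rm emb}$ onto coordinates at depth $\le k$ stabilize for $n\ge k$ (and hence form a consistent family), and (c) compactness of $\check{\mathcal{T}}^{\rm emb}$ gives tightness. Together these soft observations identify a weak limit $\check{\mu}^{\rm emb}$ by standard theory. Your proof instead constructs an explicit transport plan between $\check{\mu}_n^{\rm emb}$ and $\check{\mu}_{n+1}^{\rm emb}$, leaving the carried-forward $\check{\Lambda}$-pieces fixed and pushing each depth-$(n{+}1)$ atom mass onto the embedded copy of its scaled string, giving the deterministic displacement bound $\delta_{\rm P}\le\varepsilon_{n+1}=\sup_{\mathbf{k}\in\mathbb{N}^{n+1}}\check{P}_{\mathbf{k}}^\beta L_{\mathbf{k}}$, and then runs the same moment plus Borel--Cantelli machinery that the paper uses for the tree convergence in Proposition \ref{recconcrt}. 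Your route buys a quantitative, self-contained Prokhorov Cauchy estimate that parallels the proof already in the paper and avoids invoking consistency-of-marginals/tightness arguments; the paper's route is softer and shorter, deferring to classical weak-convergence theory. Two small points worth tightening in your write-up: (1) from $\mathbb{E}[\varepsilon_n^p]\le\mathbb{E}[L^p](\mathbb{E}[\sum_jP_j^{p\beta}])^n$ the cleanest way to get $\sum_n\varepsilon_n<\infty$ a.s.\ is Jensen ($\mathbb{E}[\varepsilon_n]\le(\mathbb{E}[\varepsilon_n^p])^{1/p}$ is geometric) followed by Tonelli, rather than a Borel--Cantelli chain which needs a choice of geometric thresholds; (2) the displacement bound uses $L_{\mathbf{k}}\perp\check{P}_{\mathbf{k}}$ (generation $n{+}1$ string length vs. product of earlier atom masses), which you should state explicitly when factoring the expectation.
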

\begin{proof} By Corollary \ref{recconcrt2}, $\lim_{n\rightarrow\infty}\delta_{\rm H}(\check{\mathcal T}_n^{\rm emb},\check{\mathcal T}^{\rm emb})=0$. More specifically,  
  \begin{equation}\label{expl} 
    \check{\mathcal T}^{\rm emb}_n = \bigcup_{m=0}^n\bigcup_{\mathbf{i}\in\mathbb{U}}\left(\check{X}_{\mathbf i}+\check{P}_{\mathbf{i}}^\beta[0,L_{\mathbf{i}}]e_{\mathbf{i}}\right)
    \quad\mbox{and hence}\quad\check{\mathcal T}^{\rm emb} = \overline{\bigcup_{n \geq 0} \check{\mathcal T}^{\rm emb}_n} 
      = \overline{\bigcup_{m\ge 0}\bigcup_{\mathbf{i}\in\mathbb{U}}\left(\check{X}_{\mathbf i}+\check{P}_{\mathbf{i}}^\beta[0,L_{\mathbf{i}}]e_{\mathbf{i}}\right)} 
  \end{equation}
  is the closure in $l^1(\mathbb{U})$ of an increasing union, a.s.\ compact as an a.s.\ Hausdorff limit of random compact subsets of $l^1(\mathbb{U})$. It remains to show that $\check{\mu}_n^{\rm emb}$ 
  converge a.s., as random probability measures on $l^1({\mathbb U})$. This is easier than Aldous's 
  \cite[Proof of Theorem 3]{8} approximation of the measure representation of the Brownian CRT, but the formalism and the key steps are the same. Firstly, mass is preserved from $\check{\mu}_n^{\rm emb}$ to $\check{\mu}_{n+1}^{\rm emb}$, since $\xi^\bullet$ is $\Xi_{\rm g}$-valued, and for all 
  $\mathbf{i}^{(1)},\ldots,\mathbf{i}^{(r)}\in\bigcup_{0\le m\le k}\mathbb{N}^m$, the projections $\pi_{\mathbf{i}^{(1)},\ldots,\mathbf{i}^{(r)}}\check{\mu}_n^{\rm emb}$
  onto the respective marginals of the random measures $\check{\mu}_n^{\rm emb}$, do not depend on $n\ge k$, hence converge a.s., to a consistent system of finite-dimensional
  marginals of a random probability measure $\check{\mu}^{\rm emb}$ on $[0,\infty)^{\mathbb{U}}$. And secondly, the sequence $(\check{\mu}_n^{\rm emb}, n \geq 0)$ is 
  (a.s.) tight as a family of (random) probability measures on $l^1(\mathbb{U})$, since $\check{\mathcal T}^{\rm emb}$ is compact a.s.. Hence, 
  $\check{\mu}^{\rm emb}:=\lim_{n \rightarrow \infty} \check{\mu}_n^{\rm emb}$ is a probability measure on $l^1(\mathbb U)$. 
\end{proof}

Denote by $\pi_{\rm w}\colon\mathbb{T}^{\rm emb}_{\rm w}\rightarrow\mathbb T_{\rm w}$ the natural projection of a weighted 
$\mathbb R$-tree in $\mathbb{T}^{\rm emb}_{\rm w}$ onto its equivalence class in $\mathbb{T}_{\rm w}$. We will abuse notation and write $(\mathcal{T},\mu)\in\mathbb{T}$, 
even though the elements of $\mathbb{T}$ are not weighted $\mathbb{R}$-trees but equivalence classes of weighted $\mathbb{R}$-trees. The point is that the operations 
described informally in Theorem \ref{constr2} of the introduction give rise to operations on $\mathbb{T}_{\rm w}$ just as $\phi_\beta$ was shown to be well-defined as an 
operation on $\mathbb{T}$. We leave the details to the reader. Since the projection $\pi_{\rm w}$ is $1$-Lipschitz, as noted in Proposition 
\ref{embprop}, we obtain the following corollary, which gives formal meaning to and establishes Theorem \ref{constr2}.

\begin{corollary}\label{eqmeas} In the setting of Proposition \ref{mmreccon}, consider the equivalence classes of weighted $\mathbb{R}$-trees 
  $(\check{\mathcal T},\check{\mu})=\pi_{\rm w}(\check{\mathcal T}^{\rm emb},\check{\mu}^{\rm emb})$ and
  $(\check{\mathcal T}_n,\check{\mu}_n)=\pi_{\rm w}(\check{\mathcal T}_n^{\rm emb},\check{\mu}_n^{\rm emb})$, $n\ge 0$, in $\mathbb{T}_{\rm w}$. Then
  \begin{equation}
    \lim_{n \rightarrow \infty} \delta_{\rm GHP}\left(\left(\check{\mathcal T}_n, \check{\mu}_n\right), 
                                                     \left( \check{\mathcal T}, \check{\mu}\right) \right)= 0 \quad\text{a.s.}.
  \end{equation}
\end{corollary}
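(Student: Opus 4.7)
The plan is to read off the convergence directly from Proposition \ref{mmreccon} by invoking the $1$-Lipschitz property of the projection $\pi_{\rm w}\colon(\mathbb{T}_{\rm w}^{\rm emb},\delta_{\rm HP})\rightarrow(\mathbb{T}_{\rm w},d_{\rm GHP})$ recorded in Proposition \ref{embprop}(ii). Since the heavy lifting, namely constructing $\check{\mathcal T}^{\rm emb}$ and $\check{\mu}^{\rm emb}$ and proving $\delta_{\rm HP}$-convergence of the embedded pairs, has already been done, the corollary becomes essentially a one-line consequence.

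Concretely, I would first verify that each pair $(\check{\mathcal T}_n^{\rm emb},\check{\mu}_n^{\rm emb})$ and the limit $(\check{\mathcal T}^{\rm emb},\check{\mu}^{\rm emb})$ genuinely belong to $\mathbb{T}_{\rm w}^{\rm emb}$ a.s.: the embedded trees are compact rooted subsets of $l^1(\mathbb{U})$ containing $0$ by Corollary \ref{recconcrt2} together with the explicit description \eqref{expl}, and the measures are Borel probability measures supported on the corresponding trees by the defining formula \eqref{massmeas} and its a.s.\ limit in Proposition \ref{mmreccon}. Applying $\pi_{\rm w}$ to these pairs then produces well-defined elements of $\mathbb{T}_{\rm w}$, which by construction of the corollary's notation are $(\check{\mathcal T}_n,\check{\mu}_n)$ and $(\check{\mathcal T},\check{\mu})$.

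For the convergence itself, Proposition \ref{embprop}(ii) specialised to these pairs yields
$$d_{\rm GHP}\bigl((\check{\mathcal T}_n,\check{\mu}_n),(\check{\mathcal T},\check{\mu})\bigr)\le\delta_{\rm HP}\bigl((\check{\mathcal T}_n^{\rm emb},\check{\mu}_n^{\rm emb}),(\check{\mathcal T}^{\rm emb},\check{\mu}^{\rm emb})\bigr),$$
and the right-hand side tends to $0$ almost surely by Proposition \ref{mmreccon}. There is no genuine obstacle: the corollary is really the observation that the a.s.\ GHP convergence claim of Theorem \ref{constr2} is weaker than, and therefore inherited from, the stronger $\delta_{\rm HP}$-convergence already established after embedding into the coordinate-indexed space $l^1(\mathbb{U})$. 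The only minor care needed is in the set-theoretic bookkeeping to confirm that the embedded pairs sit in $\mathbb{T}_{\rm w}^{\rm emb}$ and that the operations described informally in Theorem \ref{constr2} are well-defined on equivalence classes in $\mathbb{T}_{\rm w}$ (parallel to the verification for $\phi_\beta$ on $\mathbb{T}$); these are left to the reader as in the rest of the section.
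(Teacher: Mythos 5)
Your proof is correct and matches the paper's own argument exactly: the paper dispatches this corollary in a single preceding sentence by invoking the $1$-Lipschitz property of $\pi_{\rm w}$ from Proposition \ref{embprop}(ii) and the a.s.\ $\delta_{\rm HP}$-convergence from Proposition \ref{mmreccon}. The extra bookkeeping you mention (checking the embedded pairs lie in $\mathbb{T}_{\rm w}^{\rm emb}$ and that the operations descend to $\mathbb{T}_{\rm w}$) is precisely what the paper notes and leaves to the reader.
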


When there is no mass left behind on the branches of the trees $\check{\mathcal T}_n$ in the update step to $(\check{\mathcal T}_{n+1},\check{\mu}_{n+1})$, that is, when $\Lambda=0$ a.s., and when the string does not
extend further than the atoms on the string, that is, when $L=\sup\{X_i\colon P_i>0,i\ge 1\}$ a.s., we may obtain a CRT $(\check{\mathcal T}, \check{\mu})$. The following corollary confirms this, and in particular, establishes Theorem \ref{introthm} from the introduction.

\begin{corollary}[Construction of a CRT] In the setting of Proposition \ref{mmreccon}, if $L=\sup\{X_i\colon P_i>0,i\ge 1\}$ and $\sum_{i \geq 1} P_i=1$ a.s., the random weighted 
  $\mathbb R$-tree $(\check{\mathcal T}^{\rm emb}, \check{\mu}^{\rm emb})$ is a CRT. 
\end{corollary}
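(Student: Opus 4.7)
The strategy is to verify the three defining properties of a continuum tree for $(\check{\mathcal T}^{\rm emb},\check{\mu}^{\rm emb})$: that $\check{\mu}^{\rm emb}$ has no atoms, that it is supported on ${\rm Lf}(\check{\mathcal T}^{\rm emb})$, and that $\check{\mu}^{\rm emb}(T_x)>0$ for every non-leaf $x$, where $T_x$ denotes the subtree above $x$. The scaffolding for all three is the decomposition, valid for each $n\ge 0$,
\begin{equation*}
\check{\mathcal T}^{\rm emb}=\check{\mathcal T}_n^{\rm emb}\cup\bigcup_{\mathbf{i}\in\mathbb N^{n+1}}S_{\mathbf{i}}^{\rm emb},
\end{equation*}
where $S_{\mathbf{i}}^{\rm emb}$ is the closed subtree attached at $\check{X}_{\mathbf{i}}$, built from $\xi_{\mathbf{i}}$ and all its descendants by the same recipe as $\check{\mathcal T}^{\rm emb}$. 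Two such subtrees $S_{\mathbf{i}}^{\rm emb},S_{\mathbf{j}}^{\rm emb}$ for $\mathbf{i}\neq\mathbf{j}$ in $\mathbb N^{n+1}$ are disjoint except possibly at their common grafting point $\{\check{X}_{\mathbf{i}}\}=\{\check{X}_{\mathbf{j}}\}$ (which can happen if some $X_{\mathbf{i}k}=0$). Using $\sum_iP_i=1$ a.s.\ together with the product structure \eqref{pcheck}, a telescoping computation shows $\check{\mu}_k^{\rm emb}(S_{\mathbf{i}}^{\rm emb})=\check{P}_{\mathbf{i}}$ for every $k\ge|\mathbf{i}|-1$ and $\sum_{\mathbf{i}\in\mathbb N^{n+1}}\check{P}_{\mathbf{i}}=1$ a.s.; the marginal-stability argument underlying the proof of Proposition~\ref{mmreccon} then passes these identities to the limit to yield $\check{\mu}^{\rm emb}(S_{\mathbf{i}}^{\rm emb})=\check{P}_{\mathbf{i}}$.

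For non-atomicity, the key quantitative input is $\mathbb E\bigl[\sum_{\mathbf{i}\in\mathbb N^{n+1}}\check{P}_{\mathbf{i}}^{p\beta}\bigr]=\bigl(\mathbb E[\sum_jP_j^{p\beta}]\bigr)^{n+1}$, which decays geometrically by hypothesis. Markov's inequality and the first Borel--Cantelli lemma then give $\sup_{\mathbf{i}\in\mathbb N^{n+1}}\check{P}_{\mathbf{i}}\to 0$ almost surely. For any $x\in\check{\mathcal T}^{\rm emb}$ and any $n$, either $x\in S_{\mathbf{i}}^{\rm emb}$ for some $\mathbf{i}\in\mathbb N^{n+1}$, so $\check{\mu}^{\rm emb}(\{x\})\le\check{P}_{\mathbf{i}}$, or $x$ lies in the spine complement $\check{\mathcal T}_n^{\rm emb}\setminus\bigcup_{\mathbf{i}}S_{\mathbf{i}}^{\rm emb}$, which by the first-paragraph mass balance has $\check{\mu}^{\rm emb}$-measure zero. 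Letting $n\to\infty$ yields $\check{\mu}^{\rm emb}(\{x\})=0$ for every $x$.

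Once non-atomicity is in place, $\check{\mu}^{\rm emb}(\check{\mathcal T}_n^{\rm emb})=\sum_{\mathbf{i}\in\mathbb N^{n+1}}\check{\mu}^{\rm emb}(\{\check{X}_{\mathbf{i}}\})=0$, so the skeleton $\bigcup_n\check{\mathcal T}_n^{\rm emb}$, which contains the root, every branch point at a grafting vertex, and every interior point of a branch, is $\check{\mu}^{\rm emb}$-null. I then verify that every $x\in\check{\mathcal T}^{\rm emb}\setminus\bigcup_n\check{\mathcal T}_n^{\rm emb}$ is a leaf: such $x$ is the unique element of a nested intersection $\bigcap_nS_{\mathbf{i}^{(n)}}^{\rm emb}$ with $\mathbf{i}^{(n+1)}$ extending $\mathbf{i}^{(n)}$ by one index, and the diameters of these nested subtrees tend to zero (a further Borel--Cantelli argument applied to $\check{P}_{\mathbf{i}^{(n)}}^\beta {\rm ht}(\check{P}_{\mathbf{i}^{(n)}}^{-\beta}S_{\mathbf{i}^{(n)}}^{\rm emb})$); removing $x$ keeps the tree connected because any two other points lie outside $S_{\mathbf{i}^{(n)}}^{\rm emb}$ for $n$ large and connect through the spine.

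Finally, any non-leaf $x$ sits on some branch $\check{X}_{\mathbf{i}}+\check{P}_{\mathbf{i}}^\beta[0,L_{\mathbf{i}}]e_{\mathbf{i}}$ at a parameter $s$, and the condition $L_{\mathbf{i}}=\sup\{X_{\mathbf{i}j}\colon P_{\mathbf{i}j}>0\}$ produces some $j$ with $P_{\mathbf{i}j}>0$ and $X_{\mathbf{i}j}\ge s$---strictly above $s$ when $x$ is interior to the branch, and equal to $s$ when $x$ is itself the grafting point---so $S_{\mathbf{i}j}^{\rm emb}\subset T_x$ and $\check{\mu}^{\rm emb}(T_x)\ge\check{P}_{\mathbf{i}j}>0$. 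The most delicate step is the initial identity $\check{\mu}^{\rm emb}(S_{\mathbf{i}}^{\rm emb})=\check{P}_{\mathbf{i}}$: bare Portmanteau-type weak convergence only gives the lower bound, because the closed sets $S_{\mathbf{i}}^{\rm emb}$ are not pairwise disjoint when grafting positions coincide, so the cleanest route uses the marginal-stability argument behind Proposition~\ref{mmreccon}, namely that the indicator of $S_{\mathbf{i}}^{\rm emb}$ depends on only finitely many coordinates (outside the descent of $\mathbf{i}$) and $\check{\mu}_k^{\rm emb}(S_{\mathbf{i}}^{\rm emb})$ is eventually constant in $k$.
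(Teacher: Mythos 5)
Your overall route matches the paper's: use the Borel--Cantelli bound on $\max_{\mathbf{i}\in\mathbb N^{n}}\check{P}_{\mathbf{i}}$ to control atom masses, show the skeleton $\bigcup_m\check{\mathcal T}_m^{\rm emb}$ is $\check{\mu}^{\rm emb}$-null so the support lies in ${\rm Lf}(\check{\mathcal T}^{\rm emb})$ and $\check{\mu}^{\rm emb}$ is diffuse, and verify condition (iii) from $L=\sup\{X_i\colon P_i>0\}$. The one place to flag is your justification of the key mass identity. The claim that $\check{\mu}_k^{\rm emb}(S_{\mathbf{i}}^{\rm emb})$ is eventually constant in $k$ is not quite right: in $\Xi_{\rm g}$ grafting positions may coincide, and if $\check{X}_{\mathbf{i}}=\check{X}_{\mathbf{i}'}$ for $\mathbf{i}'\neq\mathbf{i}$ in the same generation while $\xi_{\mathbf{i}'}$ (or its descendants) have atoms at position $0$, then $\check{\mu}_k^{\rm emb}(S_{\mathbf{i}}^{\rm emb})$ picks up extra mass at the root from the $\mathbf{i}'$-line that only \emph{decreases} towards $\check{P}_{\mathbf{i}}$ as $k\to\infty$, because $\sum_{j:X_j=0}P_j<1$ a.s.\ (which follows from $L=\sup\{X_i\colon P_i>0\}>0$). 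The same subcriticality is what actually underlies your ``mass balance'' argument for the spine complement: as stated, concluding $\check{\mu}^{\rm emb}(\check{\mathcal T}_n^{\rm emb}\setminus\bigcup_{\mathbf{i}}S_{\mathbf{i}}^{\rm emb})=0$ from $\sum\check{P}_{\mathbf{i}}=1$ needs the pairwise overlaps of the $S_{\mathbf{i}}^{\rm emb}$ (the grafting points) to be $\check{\mu}^{\rm emb}$-null, which is exactly part of the non-atomicity you are proving, so there is a circularity to untangle. The paper avoids introducing the $S_{\mathbf{i}}^{\rm emb}$-masses and instead argues directly that $\check{\mu}_n^{\rm emb}(\check{\mathcal T}_m^{\rm emb})\to 0$ as $n\to\infty$, acknowledging that $\check{X}_{\mathbf{i}}$ may remain an atom of $\check{\mu}_{m+1}^{\rm emb}$ when some $X_{\mathbf{i}j}=0$ and relying on the decay of these atom masses along generations; your proof should invoke the same decay (rather than constancy of $\check{\mu}_k^{\rm emb}(S_{\mathbf{i}}^{\rm emb})$) to close the gap.
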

\begin{proof}
We need to check that $\check{\mu}^{\rm emb}$ a.s. satisfies properties (i), (ii) and (iii) of a continuum tree given in Section \ref{Sec31}. Note that 
$\max_{x \in \check{\mathcal T}_n^{\rm emb}} \check{\mu}^{\rm emb}_n(x) = \max_{\mathbf{i}=i_1\cdots i_{n+1} \in \mathbb N^{n+1}} \check{P}_{\mathbf{i}}$, and that for $\epsilon > 0$, 
\begin{align*} \mathbb P\left( \max_{\mathbf{i}\in \mathbb N^{n}} \check{P}_{\mathbf i} >\epsilon \right) &\leq \epsilon^{-p\beta} \mathbb E \left[ \left(\max_{\mathbf{i}\in \mathbb N^{n}} \check{P}_{\mathbf i}\right)^{p\beta} \right]  \leq  \epsilon^{-p\beta} \mathbb E \left[ \sum_{\mathbf{i}\in \mathbb N^{n}} \check{P}_{\mathbf i}^{p\beta} \right]  \leq \epsilon^{-p\beta}  \left(\mathbb E \left[ \sum_{j \geq 1} {P}_{j}^{p\beta} \right] \right)^{n},
\end{align*}
where the second last inequality can be derived as in \eqref{PDsplit}. Since $\mathbb E [ \sum_{j \geq 1} {P}_{j}^{p\beta} ] <1$ for $p > 1/\beta$, these probabilities are summable over $n\ge 1$, and so $\lim_{n \rightarrow \infty} \max_{\mathbf{i} \in \mathbb N^n}  \check{P}_{\mathbf{i} } =0$ a.s.. By construction,
$$\check{\mu}_{m+1}^{\rm emb}\left(\check{\mathcal{T}}_m^{\rm emb}\setminus\left\{X_{\mathbf{i}},\mathbf{i}\in\mathbb{N}^{m+1}\right\}\right)=0,$$
where we note that $X_{\mathbf{i}}$ may still be a $\check{\mu}_{m+1}^{\rm emb}$-atom if $X_{\mathbf{i}j}=0$ for some $j\ge 1$. However, $\{X_{\mathbf{i}},\mathbf{i}\in\mathbb{N}^{m+1}\}$ is
countable, and all atom masses reduce to 0, so 
$$\check{\mu}_n^{\rm emb}\left(\check{\mathcal{T}}_m^{\rm emb}\right)=\sum_{\mathbf{i}\in\mathbb{N}^{m+1}}\check{\mu}_n^{\rm emb}(X_{\mathbf{i}})\longrightarrow 0\qquad\mbox{as $n\rightarrow\infty$,}$$
by dominated convergence. As countable unions of null sets are null, $\bigcup_{m\ge 0}\check{\mathcal T}_m^{\rm emb}$ is $\check{\mu}^{\rm emb}$-null a.s., and so $\check{\mu}^{\rm emb}$ must be supported by the limit points in the closure, which are only leaves. Properties (i) and (ii) of a continuum tree now follow. To see (iii), note that, since $L=\sup\{X_i\colon P_i>0,i\ge 1\}$ a.s., for all $x \in \check{\mathcal T}^{\rm emb} \setminus {\rm Lf}( \check{\mathcal T}^{\rm emb})$, we see from (\ref{expl}) that there is
$\mathbf{i}=i_1 \cdots i_n \in \mathbb U$ with 
$$x\in\check{X}_{\mathbf i}+\check{P}_{\mathbf{i}}^\beta[0,L_{\mathbf{i}}]e_{\mathbf{i}}.$$
But $x$ is not a leaf, so either $x=\check{X}_{\mathbf i}+\check{P}_{\mathbf{i}}^\beta L_{\mathbf{i}}e_{\mathbf{i}}$ is at the top of this interval, and there is $j\ge 1$ such 
that $X_{\mathbf{i}j}=L_{\mathbf{i}}$ and $P_{\mathbf{i}j}>0$, in which case the subtree above $x$ has positive $\check{\mu}^{\rm emb}_{n+1}$-mass, or $x$ is not at the
top, so there is $X_{\mathbf{i}j}$ beyond $x$ with positive $\check{\mu}_n^{\rm emb}$-mass. In either case, condition (iii) holds for $x$. 
\end{proof}

We now prove Corollary \ref{introcornew}, using an embedding of $(\mathcal T_k, \mu_k)$, $k \geq 0$, into $(\check{\mathcal T}^{\rm emb}, \check{\mu}^{\rm emb})$.

\begin{proof}[Proof of Corollary \ref{introcornew}] Consider $(\check{\mathcal T}_n^{\rm emb},\check{\mu}^{\rm emb}_n)$, $n\ge 0$, built from $\xi_{\mathbf{i}}$, $i\in\mathbb{U}$, as in Corollary \ref{recconcrt2}, with limit 
  $\check{\mathcal T}^{\rm emb}$, and mass measure $\check{\mu}^{\rm emb}$ as in Proposition \ref{mmreccon}. For simplicity, we use notation
  $$(\check{E}_{\mathbf{i}},\check{\mu}_{\mathbf{i}})=\left(\check{X}_{\mathbf i}+\check{P}_{\mathbf{i}}^\beta[0,L_{\mathbf{i}}]e_{\mathbf{i}}
															,\sum_{j\ge 1} \check{P}_{\mathbf{i}j}\delta_{\check{X}_{{\mathbf i}j}}\right),\qquad\mathbf{i}\in\mathbb{N}^n,n\ge 0,$$
  for the branch of $\check{\mathcal T}_n^{\rm emb}$ corresponding to $\xi_{\mathbf{i}}$. Now let 
  $(\overline{\mathcal T}_0,\overline{\mu}_0)=(\check{E}_{\varnothing},\check{\mu}_{\varnothing})$ and given $(\overline{\mathcal T}_j,\overline{\mu}_j)$, $0\le j\le k$,
  with $\overline{\mathcal T}_k\subseteq\check{\mathcal T}^{\rm emb}$ and $\overline{\mu}_k$ the push-forward of $\check{\mu}_k^{\rm emb}$ under the natural projection from 
  $\check{\mathcal T}$ to $\overline{\mathcal T}_k$, pick $\overline{J}_k$ from $\overline{\mu}_k$. If $\overline{J}_k=\check{X}_{\mathbf{i}j}$, remove
  $\overline{\mu}_k(\overline{J}_k)\delta_{\overline{J}_k}$ from $\overline{\mu}_k$ and add to $\overline{\mathcal T}_k$ the rescaled string of beads 
  $(\check{E}_{\mathbf{i}j},\check{\mu}_{\mathbf{i}j})$ to form $(\overline{\mathcal T}_{k+1},\overline{\mu}_{k+1})$. 

  Then $(\overline{\mathcal T}_k,\overline{\mu}_k,k\ge 0)$ has the same distribution as $({\mathcal T}_k,{\mu}_k,k\ge 0)$. Since 
  $\overline{\bigcup_{k\ge 0}\overline{\mathcal T}_k}\subseteq\check{\mathcal T}^{\rm emb}$ is compact, it remains to show that the inclusion is actually an equality. To see this,
  we employ a simpler variant of an argument of \cite[Proposition 22]{1}. Roughly, let $\epsilon>0$ and consider the connected components of
  $$\left\{x\in\check{\mathcal T}^{\rm emb}\colon {\rm ht}\left(\check{\mathcal T}^{\rm emb}_x\right)\le\epsilon\right\}$$  where $\check{\mathcal T}_x^{\rm emb}=\{\sigma \in \check{\mathcal T}^{\rm emb}: x \in [[\rho,\sigma]]\}$. Since $\check{\mathcal T}^{\rm emb}$ is compact, only finitely many attain height $\epsilon$. Each of these intersects $\check{E}_{\mathbf{i}}$ for some 
  $\mathbf{i}=i_1i_2\cdots i_n\in\mathbb{U}$, and each $\check{X}_{i_1\cdots i_j}$, $1\le j\le n$, is picked as some $\overline{J}_{k_j}$, $1\le j\le n$, after a geometric number of steps
  with parameter $\overline{\mu}_{k_{j-1}}(\check{X}_{i_1\cdots i_j})$, and the tree $\overline{\mathcal T}_{k_n}$ will intersect the component. When all components of height $\epsilon$ have
  been intersected, the Hausdorff-Prokhorov distance from $(\check{\mathcal T}^{\rm emb},\check{\mu}^{\rm emb})$ is below $\varepsilon$. This completes the proof.
\end{proof}

\section{Examples and applications} \label{appl}

\subsection{Self-similar CRTs and self-similar random weighted $\bR$-trees}

\noindent In \cite{33,34}, the construction of self-similar CRTs as the genealogies of fragmentation processes is carried out, as follows. 
\begin{itemize}
  \item Take Bertoin's \cite{51} self-similar exchangeable fragmentation process in the space of partitions of $\bN$. 
  \item Restrict the process to $[k]\subset\bN$ and construct $\bR$-trees with edge lengths, consistently for all $k\ge 1$. 
  \item Check Aldous's \cite{6} leaf-tightness criterion and estimate cover sizes to obtain a compact $\bR$-tree,
  \item and a mass measure as the weak limit of the uniform probability measure on the $k$ leaves, as $k$ tends to infinity, in a consistent embedding of the discrete $\mathbb{R}$-trees with edge lengths. 
\end{itemize}
Specialising our construction of Theorem \ref{constr2} (and Theorem \ref{uniquefix}) to the genealogies of fragmentation processes, including the limiting mass measure, amounts to the following.
\begin{itemize}
  \item Take Bertoin's \cite{51} self-similar exchangeable fragmentation process in the space of partitions of $\bN$. 
  \item Associate a generalised string with the blocks containing 1 and repeat recursively in all other blocks.
  \item Recursively build an $\bR$-tree which is the fixpoint of a recursive distribution equation,
  \item and construct the mass measure as the weak limit of the approximating mass measures.
\end{itemize} 
Let us make our construction more precise. Denote by $\cP_\bN$ the set of partitions of $\bN$. Exchangeable partitions $\pi=\{\pi_i,i\ge 1\}$ have \em asymptotic frequencies \em 
$|\pi_i|=\lim_{n\rightarrow\infty}n^{-1}\#\pi_i\cap[n]$, $i\ge 1$. Listing elements of any subset $C=\{c_j,j\ge 1\}\subseteq\bN$ in increasing order $c_1<c_2<\cdots$, we define $\pi\circ C=\{\{c_j,j\in\pi_i\},i\ge 1\}$. 

Bertoin \cite{51} defined a $\cP_\bN$-valued {\it self-similar exchangeable fragmentation process} $\Pi=(\Pi(t),t\ge 0)$ of index $-\beta$ to be a Markov 
process with $\Pi(0)=\{\bN\}$, which has a stochastically continuous {\em mass process} $|\Pi|$ of asymptotic frequencies, and satisfies the branching property that given 
$\Pi(t)=\{\pi_i,i\ge 1\}$, the process $\Pi(t+\cdot)$ has the same distribution as $(\bigcup_{i\ge 1}\Pi^{(i)}(r|\pi_i|^{-\beta})\circ\pi_i,r\ge 0)$, where $\Pi^{(i)}$, $i\ge 1$ is 
a family of independent copies of $\Pi$. In particular, blocks may fragment over time and if $\beta>0$, blocks with smaller mass do so more quickly.

Bertoin \cite{51} found that the masses $|\Pi_{(k)}(t)|$ of the block $\Pi_{(k)}(t)$ containing $k$, $t\ge 0$, form decreasing $\beta$-self-similar 
Markov processes that die in finite time. Indeed, $L=\inf\{t\ge 0\colon|\Pi_{(1)}(t)|=0\}$ has moments $\bE(L^p)<\infty$ for all $p\ge 0$. In \cite{Ber-book}, Bertoin 
showed an extended branching property at stopping lines such as $D_{1,i}=\inf\{t\ge 0\colon\Pi_{(1)}(t)\neq\Pi_{(i)}(t)\}$, $i\ge 2$, which was used in \cite{35} to 
study {\em spinal partitions} $\Gamma=\{\Pi_{(i)}(D_{1,i}),i\ge 2\}$, under some assumptions that are not actually needed for the present discussion. Let us denote by 
$(P_j,j\ge 1)$ the ranked masses of $\Gamma$, by $\Gamma_j$ the corresponding block of the spinal partition and by $X_j$ the
corresponding time $D_{1,i}$ so that $\Gamma_j=\Pi_{(i)}(D_{1,i})$, $j\ge 1$. We capture the singleton blocks lost when ranking in a measure 
$\Lambda([a,b])=\lim_{n\rightarrow\infty}n^{-1}\#\{i\in[n]\colon\{i\}\in\Gamma,D_{1,i}\in[a,b]\}$, $0\le a\le b\le L$. 

Then, $\xi_\varnothing^\bullet=([0,L],(X_j)_{j\ge 1},(P_j)_{j\ge 1},\Lambda)$ is a random generalised string. The extended branching property yields that conditionally given
$\xi_\varnothing^\bullet$, the blocks $\Gamma_j$, $j\ge 1$, evolve according to independent copies $\Pi^{(j)}$ of $\Pi$, with mass scaled by $|\Gamma_j|$ and time by $|\Gamma_j|^\beta$. We use 
the independent copies $\Pi^{(j)}$ of $\Pi$ to define independent copies $\xi_j^\bullet$, $j\ge 1$, of $\xi_\varnothing^\bullet$, and recursively an RTF $(\xi_\mathbf{i}^\bullet,\mathbf{i}\in\mathbb{U})$. 

Then the weighted compact $\bR$-tree $(\check{\mathcal T},\check{\mu})$ constructed in Corollary \ref{recconcrt2} and Proposition \ref{mmreccon} from the RTF
$((\xi_\mathbf{i}^\bullet,\mathbf{i}\in\mathbb{U});\phi_\beta)$ describes the genealogy of $\Pi$ since $\Pi$ uses the same scaling for time and block masses as the
construction of $(\check{\mathcal T},\check{\mu})$ for distances and atom masses/measures. Indeed, the fixpoint relation of $\check{\mathcal T}$ in terms of the 
generalised string $\xi=\xi^\bullet_\varnothing$ in Theorem \ref{uniquefix} is a version of the spinal decomposition theorem \cite[Proposition 4(ii)]{35} for self-similar
CRTs. By the spinal decomposition theorem and uniqueness in Theorem \ref{uniquefix}, the trees $(\check{\mathcal T},\check{\mu})$ are the corresponding self-similar CRTs 
of \cite{33}, and the other self-similar weighted compact $\bR$-trees of \cite{34}, by a straightforward generalisation of the spinal decomposition theorem. 

\begin{example}[Stable trees]\label{example}\rm The most prominent family of self-similar CRTs are stable trees \cite{17}. They arise as scaling limits of conditioned Galton-Watson 
trees \cite{24}, whose offspring distribution is in the domain of attraction of a $\theta$-stable law for $\theta\in(1,2]$, including the Brownian CRT for $\theta=2$. 

It was shown in \cite[Corollary 10]{35} that the (``fine'') spinal partition $\Gamma$ is sampled from a 
${\rm PD}(1/\theta,1-1/\theta)$ mass partition $(P_i,i\ge 1)$, and that the so-called ``coarse'' spinal partition $\Gamma^*=\{\bigcup_{j\ge 1\colon X_j=X_i}\Gamma_j,i\ge 1\}$ is a 
$(\theta-1,\theta-1)$-coagulation of $\Gamma$, in the sense of \cite{cmc,5}. Still by \cite[Corollary 10]{35}, $\Gamma^*$ is sampled from a ${\rm PD}(1-1/\theta,1-1/\theta)$ mass partition $(Q_m,m\ge 1)$, 
but also has a natural spinal order represented in an exchangeable $(1-1/\theta,1-1/\theta)$-interval partition in the sense of \cite{3}, translated into a $(1-1/\theta,1-1/\theta)$-string of
beads in \cite{1}. The spinal order is an independent uniform random order, and spinal distances are given by a sequence of independent 
uniform random variables $(U_m,m\ge 1)$, relative to the length $L$ of the spinal string. Fragmentation-coagulation duality \cite{5} applies independently of $(U_m,m\ge 1)$ and yields the representation of
$([0,L],(P_i)_{i\ge 1},(X_i)_{i\ge 1})$ given in Definition \ref{introdef}, for $\beta=1-1/\theta$.

The case $\theta=2$ of the Brownian CRT is binary and has a simpler $\Xi_{\rm s}$-valued $(1/2,1/2)$-string of beads as its spine, as identified explicitly in \cite[Proposition 14(b)]{1}, implicitly or expressed in different formalism in various previous papers. E.g., the associated spinal decomposition is closely related to decompositions of Brownian excursions in terms of Brownian bridges, see \cite[Section 3.3]{37} and \cite{BeP-94,AMP}.    
\end{example}

In particular, we can now apply Theorem \ref{uniquefix} in the case of $\beta$-generalised strings of beads to deduce Theorem \ref{fixp}.

One advantage of our construction is that the exchangeability of leaves is irrelevant. Lack of exchangeability was a hurdle in bead splitting processes based on general regenerative strings of beads. In 
\cite{37}, this problem was overcome by embedding into self-similar CRTs as constructed in \cite{33}. Here, we obtain compact CRT limits directly from Corollary \ref{introcornew}, indeed we obtain compact random 
weighted $\bR$-trees in much higher generality, as demonstrated in Corollary \ref{introcornew} and Theorems \ref{constr2} and \ref{constr3}.

\subsection{Genealogical trees of growth fragmentations}

Lack of exchangeability is also a feature in bead-splitting processes associated with Bertoin's genealogical construction of self-similar growth fragmentations \cite{53}. Specifically, Bertoin's
starting point is a positive $\beta$-self-similar Markov process $Z$ starting from 1 (with no positive jumps in \cite{53}, but the exclusion of positive jumps is not essential, see \cite{54}) with finite 
lifetime, which, via Lamperti's transform, can be constructed from a (spectrally negative) L\'evy process that drifts to $-\infty$. For simplicity, let us assume that
\begin{equation} \exists_{q>0}\quad	\kappa(q)=-k+\frac{1}{2}\sigma^2q^2+bq+\int_{\bR}\left((e^y)^q-1-q(1-e^y)+(1-e^y)^q\mathbbm{1}_{\{y<0\}}\right)\nu(dy)<0,
\label{locfincond}
\end{equation}
where $(k,b,\sigma^2,\nu)$ are the characteristics of the underlying L\'evy process $Y$. If $\kappa(q)>0$ for all $q>0$, the growth fragmentation is explosive \cite{57},
and the remaining case is more delicate, and our results do not apply, certainly not directly. Now recall notation $\bU=\bigcup_{n\ge 0}\bN^n$ for the infinite
Ulam-Harris tree. Let $(Z_u,u\in\bU)$ be an independent identically distributed family of copies of $Z$. We define the self-similar growth fragmentation, as follows.
\begin{itemize}\item The process $\cZ_\varnothing=Z_\varnothing$ is the evolution of the generation-0 fragment. 
  \item Recursively, for each $u\in\bU$, the ranked sequence $P_{uj}=|\Delta\cZ_u(X_{uj})|\mathbbm{1}_{\{\Delta\cZ_u(X_{uj})<0\}}$, $j\ge 1$, of negative jumps of $\cZ_u$,
    represent the fragments of the next generation, whose evolution is scaled in fragment size and time as $\cZ_{uj}=(P_{uj}Z_{uj}(P_{uj}^{-\beta}t),t\ge 0)$, $j\ge 1$.
  \item The process $\mathbf{Z}(t)=(\cZ_u(t-b_u),u\in\bU\colon b_u\le t<b_u+\zeta_u)^\downarrow$ of decreasing rearrangements of fragment sizes at time $t\ge 0$, is called a \em self-similar growth fragmentation of index
    $-\beta$\em, where $b_u=\sum_{i=1}^{|u|}X_{u_1\ldots u_i}$ and $\zeta_u=\inf\{t\ge 0\colon\cZ_u(t)=0\}$ denote the fragment birth time and fragment lifetime, 
    respectively, and where $(\cdot)^\downarrow$ denotes the decreasing rearrangement of the collection $(\cdot)$.
\end{itemize}  

This notion turns out to generalise the notion of a binary self-similar mass fragmentation process $|\Pi|^\downarrow$, which is obtained when the L\'evy process $Y$ is
the negative of a subordinator. By \cite[Lemma 3]{53}, the condition (\ref{locfincond}) is what is needed for Theorem \ref{uniquefix} to apply.

\begin{corollary}[Genealogy of growth fragmentations]\label{growthfrag} Let $(\mathbf{Z}(t),t\ge 0)$ be a self-similar growth fragmentation of index $-\beta$ that 
  satisfies (\ref{locfincond}) for some $q>0$ and $\beta\in(0,q]$. Then the genealogical trees $\check{\cT}_n$ up to generation $n$ based on (already rescaled) 
  $\Xi$-valued strings $(\zeta_u,(P_{uj},j\ge 1),(Z_{uj}(0),j\ge 1))$, $u\in\bU$, obtained from the construction of the growth fragmentation, converge in the 
  Gromov-Hausdorff sense to a compact limiting $\bR$-tree $\check{\mathcal T}$.
\end{corollary}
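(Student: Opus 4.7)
The plan is to recognize the genealogy of the growth fragmentation as an instance of the recursive tree framework set up in Section \ref{rtp} and to apply Theorem \ref{constr3} (equivalently, Theorem \ref{uniquefix} combined with Proposition \ref{recconcrt}). Specifically, with $\xi_u=([0,\zeta_u],(X_{uj})_{j\ge 1},(P_{uj})_{j\ge 1})\in\Xi$ read off from the cell processes $\cZ_u$ (lifetime, ranked negative-jump times and ranked negative-jump sizes), the branching property of the self-similar growth fragmentation \cite{53}---together with the self-similar scaling $\cZ_{uj}=(P_{uj}Z_{uj}(P_{uj}^{-\beta}t),t\ge 0)$---says exactly that the $\xi_u$, $u\in\mathbb U$, are i.i.d.\ copies of $\xi_\varnothing$, and that the concatenation carried out recursively by $\phi_\beta$ reproduces the distances and birth positions within the genealogical $\mathbb R$-tree of $\mathbf{Z}$. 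Thus $\check{\cT}_n$ as constructed from the growth fragmentation is precisely the $n$-th approximation of Proposition \ref{recconcrt}.

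To invoke Theorem \ref{uniquefix} it remains to verify the moment hypotheses $\mathbb E[\zeta_\varnothing^p]<\infty$ and $\mathbb E[\sum_{j\ge 1}P_j^{p\beta}]<1$ for some $p\ge 1$. I would take $p=q/\beta$, which is $\ge 1$ because $\beta\le q$, so that $p\beta=q$. The second condition then becomes $\mathbb E[\sum_{j\ge 1}P_j^q]<1$; this is essentially the content of \cite[Lemma 3]{53} (obtained via the compensation formula applied to the negative jumps of $\cZ_\varnothing$), since the integrand in the definition of $\kappa(q)$ is designed precisely to equate $\kappa(q)<0$ with a strict Malthusian sub-conservation of $q$-th powers of offspring sizes.

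For the first condition, I would use Lamperti's transform to write $\zeta_\varnothing=\int_0^\infty e^{-\beta Y(s)}\,ds$, where $Y$ is the associated (killed) L\'evy process that drifts to $-\infty$. Since $\kappa(q)<0$ under (\ref{locfincond}), the Laplace exponent $\psi_Y$ of $Y$ admits $q$ as a Cram\'er root with $\psi_Y(q)<0$, from which standard Carmona--Petit--Yor-type moment identities for exponential functionals yield $\mathbb E[\zeta_\varnothing^{q/\beta}]<\infty$. (In the spectrally negative setting of \cite{53,54} this is already explicit; the extension when positive jumps are allowed proceeds identically once (\ref{locfincond}) is assumed.) The hypotheses of Theorem \ref{uniquefix} being verified, Theorem \ref{constr3} then furnishes a random compact $\mathbb R$-tree $\check{\cT}$ such that $\check{\cT}_n\to\check{\cT}$ almost surely in $(\mathbb T,d_{\rm GH})$.

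The main obstacle is the translation between the jump data of $\cZ_\varnothing$ and the $\Xi$-valued string $\xi_\varnothing$, in particular verifying that (\ref{locfincond}) gives the \emph{strict} inequality $\mathbb E[\sum_jP_j^q]<1$ (and not merely $\le 1$) and the identification of the $\xi_u$ as i.i.d.\ copies of $\xi_\varnothing$ in the precise formalism of Section \ref{rtp}; both boil down to careful bookkeeping using the branching and scaling properties of $\mathbf Z$ and the compensation formula applied to the L\'evy process underlying $\cZ_\varnothing$.
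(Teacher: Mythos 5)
Your proposal is correct and follows essentially the same route as the paper: choose $p=q/\beta$, invoke \cite[Lemma 3]{53} for $\mathbb E[\sum_j P_j^q]<1\iff\kappa(q)<0$, control $\mathbb E[L^{q/\beta}]$ via the Lamperti representation and Carmona--Petit--Yor moment bounds (the paper uses $\kappa-\psi\ge 0$ plus convexity to get $\psi(\beta)<0$, then \cite[Proposition 3.1]{CPY97}), and conclude with Theorem \ref{constr3}. One small slip: in your Lamperti identity the exponent should be $+\beta Y(s)$, not $-\beta Y(s)$, given that $Y$ drifts to $-\infty$.
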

\begin{proof}
  We apply \cite[Lemma 3]{53}. In our notation, \cite[Lemma 3]{53} states that
  $$\varphi(q):=\bE\left[\sum_{j\ge 1}P_j^q\right]<1\qquad\mbox{if and only if}\qquad\kappa(q)<0,$$
  so the assumption of Theorem \ref{uniquefix} is satisfied for $p=q/\beta$. For the moment condition on $L=\zeta_\varnothing$ note that 
  $$\bE[L]=\bE\left[\int_0^\infty\exp(\beta Y_s)ds\right]=-\frac{1}{\psi(\beta)}<\infty\qquad\mbox{if }\psi(\beta):=\frac{1}{s}\log(\bE[\exp(qY_s)])<0.$$
  Since $\kappa-\psi\ge 0$, $\kappa(q)<0$ implies $\psi(q)<0$ and hence $\psi(\beta)<0$ by convexity. For higher moments of $L$ we refer to \cite[Proposition 3.1]{CPY97}
  to see that given $\bE[L]<\infty$, we have that $\psi(q)<0$ implies $\bE[L^{q/\beta}]<\infty$, as required.
  We conclude by applying Theorem \ref{constr3}. 
\end{proof}


\begin{example}\label{exgftheta}\rm Corollary \ref{growthfrag} applies to the growth fragmentations of index $1-\theta$, associated with the Brownian map \cite{54,55}, in the case
$\theta=3/2$, in the family
$$\kappa_\theta(q)=\frac{\cos(\pi(q-\theta))}{\sin(\pi(q-2\theta))}\cdot\frac{\Gamma(q-\theta)}{\Gamma(q-2\theta)},\quad\theta<q<2\theta+1,\quad\mbox{with }\kappa_\theta(q)<0\iff\theta+1/2<q<\theta+3/2.$$
They were proposed by \cite{54} in connection with certain Boltzmann planar maps and the stable maps of \cite{LGM}, when $\theta\in(1,3/2]$. In the theory of random maps, the
present developments starting from positive initial mass $1$ or $x>0$ correspond to maps with a boundary (also called disks). In the case of the Brownian map without a
boundary, genealogical trees have been studied under the name of ``Brownian cactus'' \cite{CLM13}. When the index of self-similarity $1-\theta$ is changed to $-\theta$, 
the wider parameter range $\theta\in(1/2,3/2]$ is considered in \cite{54}. Again, this is within the framework of Corollary \ref{growthfrag}. 
\end{example}

Our methods establish moments for the height of the tree, which is the extinction time of the growth fragmentation, in fact we obtain $q/\beta$-moments for all $q>0$ with $\kappa(q)<0$.  

\begin{corollary}\label{corgfmom} In the setting of Corollary \ref{growthfrag}, the height of the tree $\check{\mathcal T}$ of index $-\beta$ has finite moment 
  $\mathbb E[{\rm ht}(\check{\mathcal T})^p]< \infty$ of order $p$ for all $p<\sup\{q>0\colon\kappa(q)<0\}/\beta$.
\end{corollary}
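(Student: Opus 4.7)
The plan is to apply Corollary \ref{heightmom} directly, using the two moment translations already established within the proof of Corollary \ref{growthfrag}. Fix $p$ with $1\le p<\sup\{q>0\colon\kappa(q)<0\}/\beta$. By definition of supremum and the fact that $\{q>0\colon\kappa(q)<0\}$ is open (since $\kappa$ is continuous), I can choose $q\in(p\beta,\sup\{q'>0\colon\kappa(q')<0\})$ with $\kappa(q)<0$; set $p^\prime:=q/\beta$, so $p^\prime>p\ge 1$. The two hypotheses of Lemma \ref{contract} at parameter $p^\prime$ are then exactly what was verified in the proof of Corollary \ref{growthfrag}: first, \cite[Lemma 3]{53} gives
\[
\mathbb E\!\left[\sum_{j\ge 1}P_j^{p^\prime\beta}\right]=\mathbb E\!\left[\sum_{j\ge 1}P_j^q\right]=\varphi(q)<1\iff\kappa(q)<0,
\]
and second, $\kappa(q)<0$ implies $\psi(q)<0$ by convexity (as $\kappa-\psi\ge 0$ and $\kappa(0)=\psi(0)=0$), which by \cite[Proposition 3.1]{CPY97} combined with $\mathbb E[L]<\infty$ yields $\mathbb E[L^{p^\prime}]=\mathbb E[L^{q/\beta}]<\infty$. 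Thus Corollary \ref{heightmom} gives $\mathbb E[{\rm ht}(\check{\mathcal T})^{p}]<\infty$.

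It remains to cover $p<1$, for which Corollary \ref{heightmom} does not apply directly. This is straightforward: the setting of Corollary \ref{growthfrag} assumes the existence of some $q>0$ with $\kappa(q)<0$ and $\beta\le q$, so $\sup\{q>0\colon\kappa(q)<0\}/\beta\ge 1$; the first paragraph therefore yields at least $\mathbb E[{\rm ht}(\check{\mathcal T})]<\infty$, and Jensen's inequality (or monotonicity of $L^p$-norms on a probability space) then upgrades to $\mathbb E[{\rm ht}(\check{\mathcal T})^p]<\infty$ for every $p<1$. There is no real obstacle: the essential work was the equivalence $\varphi(q)<1\Leftrightarrow\kappa(q)<0$ and the moment bound on $L$ extracted from the Lamperti representation, both recorded inside the proof of Corollary \ref{growthfrag}, so the present corollary is obtained by reading off the optimal $p^*$ in Corollary \ref{heightmom} in the $\kappa$-parametrisation.
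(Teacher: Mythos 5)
Your proof is correct and takes the paper's route exactly: apply Corollary \ref{heightmom} after translating the moment conditions via \cite[Lemma 3]{53} (for $\varphi(q)<1\Leftrightarrow\kappa(q)<0$) and \cite[Proposition 3.1]{CPY97} (for $\mathbb{E}[L^{q/\beta}]<\infty$), as the proof of Corollary \ref{growthfrag} already records, with the Jensen step for $p<1$ a sensible clarification of the scope of Corollary \ref{heightmom}. One tangential slip: the parenthetical $\kappa(0)=\psi(0)=0$ is not generally true (from \eqref{locfincond}, $\kappa(0)=-k+\nu((-\infty,0))$), but you never actually use it---$\psi(q)<0$ follows directly from $\kappa-\psi\ge 0$ and $\kappa(q)<0$, and convexity of $\psi$ is what gives $\psi(\beta)<0$ and hence $\mathbb{E}[L]<\infty$.
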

\begin{proof} This follows from Corollary \ref{growthfrag} (and its proof) and from Corollary \ref{heightmom}.  
\end{proof}

In \cite[Corollary 4.5]{54}, this was proved in the absence of positive jumps (i.e.\ when the L\'evy measure $\nu$ of $Y$ is concentrated on $(-\infty,0)$). In fact, they obtained a more precise tail behaviour of the extinction time of the
growth fragmentation, which is the height of $\check{\mathcal T}$. The growth fragmentation associated with the Brownian map has no positive jumps, so their result
already establishes finite moments of order $p<2$.

In the cases $\theta\in(1/2,3/2)$, the growth fragmentations of Example \ref{exgftheta} have positive jumps, since \cite{54} show that the L\'evy measure $\nu_\theta$
underlying $\kappa_\theta$ is the push-forward of the measure on $(1/2,\infty)$ with Lebesgue density
$$\frac{\Gamma(\theta+1)}{\pi}\left(x^{-\theta-1}(1-x)^{-\theta-1}1_{\{1/2<x<1\}}+\sin(\pi(\theta-1/2))x^{-\theta-1}(x-1)^{-\theta-1}1_{\{x>1\}}\right)$$
under the map $x\mapsto\log(x)$. Therefore, \cite[Corollary 4.5]{54} does not apply for $\theta\in(1/2,3/2)$, while Corollary \ref{corgfmom} yields finite moments of the 
height of $\check{\mathcal T}$, and hence for the extinction time of the growth fragmentation, as well as any associated random map, up to order 
$p<(\theta+3/2)/(\theta-1)=1+5/2(\theta-1)$ in the case $\beta=\theta-1$, $\theta\in(1,3/2]$, and up to order $p<(\theta+3/2)/\theta=1+3/2\theta$ in the case $\beta=\theta$, $\theta\in(1/2,3/2]$.



\subsection{Hausdorff dimension of fixpoint trees $\check{\mathcal{T}}$}

We will show that the Hausdorff dimension of the set of leaves of $\check{\mathcal{T}}$ is generally $q^*/\beta$, where we define $q^*=\inf\{q>0\colon\varphi(q)<1\}$ and 
$\varphi(q)=\mathbb{E}[\sum_{i\ge 1}P_i^q]$. We will need a technical assumption of $\mathbb{E}[L^{-q/\beta}]<\infty$ to avoid very short strings that may be able to pack 
leaves very close together and potentially cause a drop in Hausdorff dimension. We do not know if this drop in dimension actually ever happens. We also need to address two trivialities. The first is that in the case
$\mathbb{P}(X_i=0\mbox{ for all }i\ge 1\mbox{ with }P_i>0)=1$, the tree $\check{\mathcal{T}}$ will be a star tree with countably many leaves.  The second is that in the 
case where $\mathbb{P}(P_1=0)>0$, the ``extinction event'' $\mathcal{E}=\{\inf\{n\ge 1\colon\check{P}_{\mathbf{i}}=0\mbox{ for all }\mathbf{i}\in\mathbb{N}^n\}<\infty\}$ 
has positive probability, and on $\mathcal{E}$, the tree $\check{\mathcal{T}}$ will have only finitely many leaves. We refer to $\mathcal{E}^c$ as the event of 
non-extinction.

\begin{theorem}\label{dim} Consider $(\check{\mathcal{T}},\check{\mu})$ in the setting of Theorem \ref{constr3}. Assume further that $\mathbb{E}[L^{-q/\beta}]<\infty$ and
  that $\mathbb{P}(X_i>0\mbox{ for all }i\ge 1\mbox{ with }P_i>0)=1$.
  Let $\varphi(q)=\mathbb{E}[\sum_{i\ge 1}P_i^q]$ and $q^*=\inf\{q>0\colon\varphi(q)<1\}$. Then the set ${\rm Lf}(\check{\mathcal{T}})$ of leaves of $\check{\mathcal{T}}$ has 
  Hausdorff dimension ${\rm dim}_{\rm H}({\rm Lf}(\check{\mathcal{T}}))=q^*/\beta$, and $\check{\mathcal{T}}$ itself has Hausdorff dimension 
  ${\rm dim}_{\rm H}(\check{\mathcal{T}})=\max\{q^*/\beta,1\}$ a.s., on the event $\mathcal{E}^c$ of non-extinction.
\end{theorem}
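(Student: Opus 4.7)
The plan is to bound $\dim_{\rm H}({\rm Lf}(\check{\mathcal T}))$ from above and below separately, and then derive the dimension of the whole tree from the decomposition $\check{\mathcal T}\subseteq\bigcup_{\mathbf i\in\mathbb U}\check E_{\mathbf i}\cup{\rm Lf}(\check{\mathcal T})$ exposed in (\ref{expl}), where each $\check E_{\mathbf i}$ is a line segment. The upper bound on the leaf dimension will follow from the natural cover by generation-$n$ subtrees, while the lower bound will use a Mandelbrot multiplicative cascade measure via the mass distribution principle; the latter is the main obstacle.

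For the upper bound, note that by the recursive tree framework construction, the subtree of $\check{\mathcal T}$ descending from $\check X_{\mathbf i}$, $\mathbf i\in\mathbb N^n$, is an isometric copy of $\check{\mathcal T}$ scaled by $\check P_{\mathbf i}^\beta$, and so has diameter at most $2\check P_{\mathbf i}^\beta H_{\mathbf i}$, where the $H_{\mathbf i}$ are i.i.d.\ copies of ${\rm ht}(\check{\mathcal T})$, independent of the first-$n$-generation weights $(\check P_{\mathbf i})_{\mathbf i\in\mathbb N^n}$. These subtrees cover ${\rm Lf}(\check{\mathcal T})$. For $q>q^*$ with $\varphi(q)<1$, Corollary \ref{heightmom} yields $\mathbb E[H^s]<\infty$ with $s=q/\beta$, and the cascade factorisation $\mathbb E[\sum_{\mathbf i\in\mathbb N^n}\check P_{\mathbf i}^q]=\varphi(q)^n$ (by recursive use of (\ref{PDsplit})) combined with independence gives
\begin{equation*}
\sum_{n\ge 0}\mathbb E\!\left[\sum_{\mathbf i\in\mathbb N^n}\bigl(2\check P_{\mathbf i}^\beta H_{\mathbf i}\bigr)^s\right]=2^s\mathbb E[H^s]\sum_{n\ge 0}\varphi(q)^n<\infty.
\end{equation*}
Borel--Cantelli then gives simultaneously $\max_{\mathbf i\in\mathbb N^n}\check P_{\mathbf i}^\beta H_{\mathbf i}\to 0$ a.s.\ and $\mathcal H^s({\rm Lf}(\check{\mathcal T}))=0$ a.s., so letting $q\downarrow q^*$ we obtain $\dim_{\rm H}({\rm Lf}(\check{\mathcal T}))\le q^*/\beta$.

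For the lower bound I would construct, for each $q<q^*$, a random probability measure $\nu_q$ supported on ${\rm Lf}(\check{\mathcal T})$ via a Mandelbrot cascade at exponent $q$. Since $\varphi(q)>1$, the normalised weights $W_n^{(q)}=\varphi(q)^{-n}\sum_{\mathbf i\in\mathbb N^n}\check P_{\mathbf i}^q$ form a positive mean-one martingale, and for $q$ sufficiently close to $q^*$ Biggins' martingale convergence theorem gives a.s.\ convergence $W_n^{(q)}\to W_\infty^{(q)}>0$ on the non-extinction event $\mathcal E^c$. Disintegrating inductively along the genealogy yields a random probability measure $\nu_q$ assigning mass $\check P_{\mathbf i}^q/(\varphi(q)^n W_\infty^{(q)})$ to the subtree rooted at $\check X_{\mathbf i}$ for each $\mathbf i\in\mathbb N^n$, and an argument analogous to the one establishing the continuum-tree property of $\check\mu^{\rm emb}$ in the preceding corollary shows that $\nu_q$ is supported on ${\rm Lf}(\check{\mathcal T})$. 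It then suffices, by the mass distribution principle, to show that $\nu_q$-a.s.\ $\nu_q(B(x,r))=O(r^{q/\beta-\epsilon})$ as $r\downarrow 0$ for every $\epsilon>0$. The hypothesis $\mathbb E[L^{-q/\beta}]<\infty$ is used here to exclude anomalously short strings that would allow mass to accumulate in small balls, while $\mathbb P(X_i>0\text{ for all }i\text{ with }P_i>0)=1$ ensures genuine spatial separation of subtrees at each branching, so that a ball of radius $r$ meets essentially the generation-$n(r)$ subtree along one spine $\mathbf i$ with $n(r)\sim\log(1/r)$. The local density estimate should then follow from Biggins' speed/large-deviation results for the branching random walk $(-\log\check P_{\mathbf i})_{\mathbf i\in\mathbb U}$, together with a Borel--Cantelli argument over dyadic scales; letting $q\uparrow q^*$ and $\epsilon\downarrow 0$ gives $\dim_{\rm H}({\rm Lf}(\check{\mathcal T}))\ge q^*/\beta$.

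Finally, for the whole tree, $\check{\mathcal T}\setminus{\rm Lf}(\check{\mathcal T})\subseteq\bigcup_{\mathbf i\in\mathbb U}\check E_{\mathbf i}$ is contained in a countable union of line segments of Hausdorff dimension $1$, whence by countable stability $\dim_{\rm H}(\check{\mathcal T})\le\max\{1,q^*/\beta\}$; conversely, on $\mathcal E^c$, the segment $\check E_\varnothing=[0,L_\varnothing]e_\varnothing$ and the leaf set are both contained in $\check{\mathcal T}$, so $\dim_{\rm H}(\check{\mathcal T})\ge\max\{1,q^*/\beta\}$. The hard part will be the lower bound step: verifying Biggins' non-degeneracy condition uniformly for $q$ approaching $q^*$ and controlling the local $\nu_q$-mass of arbitrary balls under only the given moment assumptions, where $\mathbb E[L^{-q/\beta}]<\infty$ plays the decisive role in preventing pathological concentration.
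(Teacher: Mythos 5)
Your upper bound is essentially a fixed-generation variant of the paper's stopping-line argument (Lemma~\ref{upper}) and it is correct: the cascade identity $\mathbb E[\sum_{\mathbf i\in\mathbb N^n}\check P_{\mathbf i}^q]=\varphi(q)^n$, independence of the subtree heights from the first-$n$-generation weights, and $\varphi(q)<1$ for $q>q^*$ small enough (which also guarantees $\mathbb E[L^{q/\beta}]<\infty$) give summability, and your Fatou/Borel--Cantelli deduction of $\mathcal H^{q/\beta}({\rm Lf}(\check{\mathcal T}))=0$ is sound. This is slightly simpler than the paper's stopping-line cover $\mathcal L_\epsilon$, though both are really the same estimate.

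The lower bound is where your outline diverges from the paper and where it has a genuine gap. You propose a Mandelbrot cascade at an exponent $q<q^*$ (avoiding the need for $\varphi(q^*)=1$, which the paper itself notes need not hold), Biggins' non-degeneracy, and the mass distribution principle with a local ball estimate $\nu_q(B(x,r))=O(r^{q/\beta-\epsilon})$. Two steps here are not resolved under the stated hypotheses. First, non-degeneracy of the cascade martingale at $q<q^*$ requires a moment/$(X\log X)$-type condition on $\sum_i P_i^q$ which is not implied by $\mathbb E[L^p]<\infty$ and $\mathbb E[\sum_i P_i^{p\beta}]<1$; the paper sidesteps this by first reducing to finitely many atoms, where a direct $L^r$ bound ($r>1$) on the additive martingale is immediate. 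Second, and more seriously, the claim that a ball of radius $r$ ``meets essentially the generation-$n(r)$ subtree along one spine'' is false without a uniform separation of the atom locations: under only $\mathbb P(X_i>0\ \forall i\colon P_i>0)=1$, the $X_i$ can accumulate at the root of a string, so that arbitrarily small balls centred near a branch point can intersect infinitely many subtrees, and no speed/large-deviation estimate for $(-\log\check P_{\mathbf i})$ alone controls the resulting mass concentration. This is exactly the obstruction the paper handles by a two-stage argument: Lemma~\ref{lower} proves the lower bound only for strings with $P_i=0$ for $i>N$ and $X_i>\epsilon L$ (so that subtrees at each node are finitely many and uniformly separated, and the energy integral $\mathbb E[\iint d(v,v')^{-\gamma}\,\mu^*(dv)\mu^*(dv')]$ can be bounded term by term), and then the proof of Theorem~\ref{dim} approximates the general $\xi$ by truncations $\xi^{(N,\epsilon)}$, using monotone convergence $q^*_{N,\epsilon}\uparrow q^*$. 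You flag the local-ball estimate as ``the hard part,'' but this is precisely where a new idea is needed; without a truncation (or some substitute for it), the mass distribution principle cannot be applied as written. Note also that the paper's energy/Frostman route only needs pairwise distance control at each branch point (captured by $d(v,\rho_{\mathbf ij})\ge\epsilon L_{\mathbf ij}$ and $\mathbb E[L^{-\gamma}]<\infty$), which is structurally easier to verify than a local ball estimate for the mass distribution principle, and this is where the hypothesis $\mathbb E[L^{-q/\beta}]<\infty$ actually enters.

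Your final reduction of $\dim_{\rm H}(\check{\mathcal T})$ to $\max\{1,q^*/\beta\}$ from the leaf dimension via countable stability is the same as the paper's and is correct.
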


The case $\mathbb{P}(X_i=0\mbox{ for some }i\ge 1\mbox{ with }P_i>0)>0$, while not included in the theorem nor in the trivialities discussed above, can be reduced to the case of the theorem. Specifically, 
consider the RTF $(\xi_{\mathbf{i}},\mathbf{i}\in\mathbb{U})$ in such a case. For all $\mathbf{i},\mathbf{j}=j_1\ldots j_m\in\mathbb{U}$ with $X_{\mathbf{i}j_1}>0$,  
$X_{\mathbf{i}j_1j_2}=\cdots=X_{\mathbf{i}j_1\ldots j_m}=0$, the string $\xi_{\mathbf{i}\mathbf{j}}$ is effectively attached to position $\check{X}_{\mathbf{i}j_1}$ of
an earlier generation, with scaling factor $\check{P}_{\mathbf{i}\mathbf{j}}/\check{P}_{\mathbf{i}}$. By re-assigning all such atoms to $\xi_{\mathbf{i}}$, we can 
construct the same $\check{\mathcal{T}}$ from RTFs to which the theorem applies. We leave the details to the reader.

We also stress that while the assumption that there is $q$ with $\varphi(q)<1$ may be regarded as a Malthusian assumption in the language of branching process theory, 
we do not actually assume $\varphi(q^*)=1$. It is possible that $\varphi(q^*)=\infty$ or that $\varphi(q^*)<1$. Indeed, under our assumptions, $\varphi$ is always finite 
on a sub-interval of $(0,\infty)$. This interval may be bounded above if $P_1$ lacks higher moments and bounded away from 0 if $P_i\rightarrow 0$ too slowly as 
$i\rightarrow\infty$. As an extreme example, consider $P_1$ with probability density function proportional to $x^{-2}(\log x)^{-2}1_{\{x\ge\epsilon\}}$ and 
$P_i=P_1/2i\log(i)^2$, $i\ge 2$. Then $\varphi(q)<\infty$ if and only if $q=1$, and for $\epsilon$ small enough, $\varphi(1)<1$.

The big steps of the proof are the same as in the self-similar case in the sense of Haas and Miermont \cite{33} and Stephenson \cite{34}. In particular, we will adapt 
Stephenson's method of biasing the tree for the sampling of two leaves from suitable finite random measures on the trees. While \cite{33}, and \cite{34} to some  
extent, worked with infinite dislocation measures of continuous-time fragmentation processes, we work exclusively with the discrete branching structure along the 
generations in $\mathbb{U}$. Such a classical branching processes approach via Malthusian martingales was used very recently in the special case of growth fragmentations \cite{54} to construct an \em intrinsic area measure \em on the boundary $\partial\mathbb{U}$ of $\mathbb{U}$.

Let us sketch our argument to point out some of the simplifications, as well as the approximation method for the lower bound, which is 
new as unlike \cite{34}, we have no automatic control of negative moments of the heights of random leaves. Indeed, the relevant assumption of Theorem \ref{dim} is
made directly in terms of the most basic data, the length $L$ of the string, while locations $X_i$, $i\ge 1$, of small atoms may accumulate at the left end-point. 

\begin{lemma}[Upper bound]\label{upper} In the setting of Theorem \ref{constr3}, with $q^*$ as in Theorem \ref{dim}, we have 
  ${\rm dim}_{\rm H}({\rm Lf}(\check{\mathcal{T}}))\le q^*/\beta$.
\end{lemma}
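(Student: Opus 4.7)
The plan is a standard Malthusian covering argument. I would cover $\check{\mathcal{T}}$ by subtrees $S_{\mathbf{i}}\subseteq\check{\mathcal{T}}^{\rm emb}$, $\mathbf{i}\in\mathbb{N}^n$, where $S_{\mathbf{i}}$ is the subtree grafted onto $\check{X}_{\mathbf{i}}$, obtained by rescaling by $\check{P}_{\mathbf{i}}^\beta$ and translating an independent copy $\check{\mathcal{T}}^\prime_{\mathbf{i}}$ of the fixpoint tree built from the sub-RTF $(\xi_{\mathbf{i}\mathbf{j}},\mathbf{j}\in\mathbb{U})$. In particular ${\rm diam}(S_{\mathbf{i}})=\check{P}_{\mathbf{i}}^\beta D_{\mathbf{i}}$ with $D_{\mathbf{i}}\stackrel{d}{=}D:={\rm diam}(\check{\mathcal{T}})$, independent of $\check{P}_{\mathbf{i}}$. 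From (\ref{expl}), the family $\{S_{\mathbf{i}}\}_{\mathbf{i}\in\mathbb{N}^n}$ covers $\check{\mathcal{T}}\setminus\check{\mathcal{T}}_{n-1}$, hence covers every leaf of $\check{\mathcal{T}}$ except the countably many tips of branches, which contribute nothing to any positive-dimensional Hausdorff measure.

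I would then fix $q$ slightly above $q^*$ with $\varphi(q)<1$ and, invoking Corollary \ref{heightmom} together with $D\le 2\,{\rm ht}(\check{\mathcal{T}})$, with $\mathbb{E}[D^s]<\infty$ for $s:=q/\beta$. Setting $Y_n=\sum_{\mathbf{i}\in\mathbb{N}^n}({\rm diam}(S_{\mathbf{i}}))^s$ and exploiting the i.i.d.\ factorisation of $\check{P}_{\mathbf{i}}$ along $\mathbb{U}$ (cf.\ the recursive use of (\ref{PDsplit}) in the proof of Proposition \ref{recconcrt}) together with the independence of $D_{\mathbf{i}}$ and $\check{P}_{\mathbf{i}}$, one readily obtains
\begin{equation*}
\mathbb{E}[Y_n]=\mathbb{E}[D^s]\,\varphi(q)^n.
\end{equation*}
Since $\varphi(q)<1$ the right-hand side is summable, so by Markov and the first Borel--Cantelli lemma $Y_n\to 0$ almost surely. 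As $\max_{\mathbf{i}\in\mathbb{N}^n}{\rm diam}(S_{\mathbf{i}})\le Y_n^{1/s}$, the mesh of the cover also vanishes a.s., so for every $\delta>0$ eventually $\mathcal{H}^s_\delta({\rm Lf}(\check{\mathcal{T}}))\le Y_n$, and hence $\mathcal{H}^s({\rm Lf}(\check{\mathcal{T}}))=0$ a.s. Thus ${\rm dim}_{\rm H}({\rm Lf}(\check{\mathcal{T}}))\le q/\beta$, and letting $q\downarrow q^*$ yields the claim.

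The main obstacle I expect is ensuring that there exist $q>q^*$ arbitrarily close to $q^*$ with both $\varphi(q)<1$ and $\mathbb{E}[D^{q/\beta}]<\infty$ holding simultaneously. The former is automatic since $\{\varphi<1\}$ is open by convexity of $\varphi$, while the latter demands that the moment range of Corollary \ref{heightmom} reach down to $q^*$. This should follow from the standing hypotheses of Theorem \ref{constr3} (which provide some $p\ge 1$ with $\varphi(p\beta)<1$ and $\mathbb{E}[L^p]<\infty$), but the verification requires a short continuity/interpolation argument that I would carry out at the beginning of the proof.
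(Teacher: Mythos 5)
Your proof is correct and takes a genuinely different route from the paper's. The paper covers the leaf set using the stopping line $\mathcal{L}_\epsilon=\{i_1\ldots i_n\colon \check{P}_{i_1}>\epsilon,\ldots,\check{P}_{i_1\ldots i_{n-1}}>\epsilon,\check{P}_{i_1\ldots i_n}\le\epsilon\}$, which freezes each line of descent at the first generation where the cumulative scaling drops below $\epsilon$, and then bounds $\mathbb{E}[\sum_{\mathbf{i}\in\mathcal{L}_\epsilon} H_{\mathbf{i}}^{q/\beta}]$ by $\mathbb{E}[H_\varnothing^{q/\beta}]<\infty$; the stopping-line formalism is natural from the branching-process side and makes the mesh control $\check{P}_{\mathbf{i}}\le\epsilon$ visible. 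You instead cover with the fixed-generation family $\{S_{\mathbf{i}}\colon\mathbf{i}\in\mathbb{N}^n\}$ and compute $\mathbb{E}[Y_n]=\mathbb{E}[D^s]\varphi(q)^n$, which decays geometrically; a Markov--Borel--Cantelli step then gives $Y_n\to 0$ a.s., and hence that the mesh $\max_{\mathbf{i}}\operatorname{diam}(S_{\mathbf{i}})\le Y_n^{1/s}\to 0$ a.s. automatically. This is cleaner in two respects: (a) you get the mesh control and the sum control from a single quantity, whereas the paper asserts that the covers ``refine'' without isolating the estimate that makes this precise; and (b) you obtain $\mathcal{H}^s(\mathrm{Lf}(\check{\mathcal{T}}))=0$, a slightly stronger conclusion than finiteness. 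Your final caveat about the range of $q$ is real but applies equally to the paper's proof, which also invokes ``$\mathbb{E}[L^{q/\beta}]<\infty$ for $q>q^*$ sufficiently small'' without a detailed check; the interpolation needed is that the hypotheses of Theorem \ref{constr3} supply some $p$ with $\varphi(p\beta)<1$ and $\mathbb{E}[L^p]<\infty$, so that (using convexity/continuity of $\varphi$ on the interior of its domain of finiteness) $q^*<p\beta$ strictly, hence $q/\beta<p$ for $q$ slightly above $q^*$ and Corollary \ref{heightmom} gives $\mathbb{E}[D^{q/\beta}]<\infty$. One small point worth writing out in a polished version: the generation-$n$ family $\{S_{\mathbf{i}}\}$ covers $\mathrm{Lf}(\check{\mathcal{T}})$ only up to the (at most countable) set of string tips contained in $\check{\mathcal{T}}_{n-1}$; for $s>0$ this countable set is $\mathcal{H}^s$-null, as you note, but in the degenerate case $q^*=0$ the bound $\dim_{\rm H}\le 0$ is not what you want to read off from $\mathcal{H}^0$ of a countable set --- there the claimed bound is trivially attained because $q^*/\beta=0$ forces the claim $\dim_{\rm H}\ge 0$ anyway, so you should simply treat $q^*=0$ separately.
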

\begin{proof} Let $\check{P}_{\mathbf{i}}$, $\mathbf{i}\in\mathbb{U}$, be as in (\ref{pcheck}), and let $\epsilon>0$. Consider the branching process 
  $(\check{P}_{\mathbf{i}},\mathbf{i}\in\mathbb{N}^n)$, $n\ge 0$, and the stopping line  
  $$\mathcal{L}_\epsilon=\{i_1\ldots i_n\in\mathbb{U}\colon\check{P}_{i_1}>\epsilon,\ldots,\check{P}_{i_1\ldots i_{n-1}}>\epsilon,\check{P}_{i_1\ldots i_n}\le\epsilon\},$$
  which is a ``finite stopping line'' in the sense that it a.s. stops after a finite number of generations (for instance as $\check{\mathcal{T}}$ is compact). This 
  stopping line generates the system of frozen cells considered in \cite[Proposition 2.5]{54} in connection with the Malthusian martingale, when $\varphi(q^*)=1$.  
  In our generality, we can see for all $q\ge q^*$ with $\varphi(q)<1$, that $\mathbb{E}[\sum_{\mathbf{i}\in\mathcal{L}_\epsilon}\check{P}_{\mathbf{i}}^{q}]<1$, and by
  the (elementary) extended branching property at $\mathcal{L}_\epsilon$ for the discrete-time branching process, the heights $H_{\mathbf{i}}$ of the subtrees are i.i.d.
  heights scaled by $\check{P}_{\mathbf{i}}^\beta$ such that 
  $$\mathbb{E}\left[\sum_{\mathbf{i}\in\mathcal{L}_\epsilon}H_{\mathbf{i}}^{q/\beta}\right]
    =\sum_{\mathbf{i}\in\mathbb{U}}\mathbb{E}\left[1_{\{\mathbf{i}\in\mathcal{L}_\epsilon\}}\check{P}_{\mathbf{i}}^{q}\right]
                                   \mathbb{E}\left[H_\varnothing^{q/\beta}\right]
    <\mathbb{E}\left[H_\varnothing^{q/\beta}\right]<\infty$$
  by Corollary \ref{heightmom}, as $\mathbb{E}[L^{q/\beta}]<\infty$ for $q>q^*$ sufficiently small. Since the balls $\overline{B}(\check{X}_{\mathbf{i}},H_{\mathbf{i}})$ 
  of radius $H_{\mathbf{i}}$ around the roots $\check{X}_{\mathbf{i}}\in\check{\mathcal{T}}$ of the subtrees, $\mathbf{i}\in\mathcal{L}_\epsilon$, together with the 
  countable number of singleton leaves at the ends of strings, form refining covers of ${\rm Lf}(\check{\mathcal{T}})$ as $\varepsilon\downarrow 0$, the Hausdorff 
  dimension is at most $q/\beta$, for some $q\downarrow q^*$, as required.   
\end{proof}

This establishes the upper bound claimed in Theorem \ref{dim}.

\begin{lemma}[Lower bound]\label{lower} In the setting of Theorem \ref{dim}, suppose further that $X_i>\epsilon L$ for $1\le i\le N$ and $P_i=0$ for
  $i\ge N+1$, for some $N<\infty$. Then either $q^*=0$ or $\varphi(q^*)=1$, and ${\rm dim}_{\rm H}({\rm Lf}(\check{\mathcal{T}}))\ge q^*/\beta$, on the event
  $\mathcal{E}^c$ of non-extinction. 
\end{lemma}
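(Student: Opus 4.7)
My plan has four steps. First, under the simplifying assumption, $\varphi(q)=\mathbb{E}[\sum_{i=1}^N P_i^q]$ is continuous on $[0,\infty)$ by dominated convergence (as $N<\infty$ and $P_i\le 1$), so whenever $q^*>0$ the defining infimum together with continuity forces $\varphi(q^*)=1$; otherwise $q^*=0$ and the claim ${\rm dim}_{\rm H}\ge q^*/\beta$ is trivial, which gives the dichotomy.

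Second, in the case $q^*>0$, I would introduce the Malthusian martingale
$$M_n := \sum_{\mathbf i\in\mathbb N^n}\check P_{\mathbf i}^{q^*},\qquad n\ge 0,$$
which is nonnegative with mean $1$ since $\varphi(q^*)=1$. Because $N<\infty$ and $P_i\le 1$, the Kesten--Stigum $X\log X$-condition holds trivially, so $M_n\to M_\infty$ in $L^1$ with $M_\infty>0$ a.s.\ on $\mathcal E^c$. Using the subtree martingale limits $M_\infty^{(\mathbf i)}$, I would then define an intrinsic random probability measure $\mu^*$ on the Ulam--Harris boundary $\partial\mathbb U$ through the consistent cylinder assignment $\mu^*(V_{\mathbf i})=\check P_{\mathbf i}^{q^*}M_\infty^{(\mathbf i)}/M_\infty$, transferred to ${\rm Lf}(\check{\mathcal T})$ via the natural identification $\mathbf X\mapsto\lim_n\check X_{X_1\cdots X_n}$; since $\max_{\mathbf i\in\mathbb N^n}\check P_{\mathbf i}\to 0$ by the argument in Lemma \ref{upper}, a standard atoms-vanish argument confirms $\mu^*$ is supported on the leaves.

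Third, I would apply Frostman's energy criterion: for every $s<q^*/\beta$, showing
$$\mathcal I_s := \mathbb E\!\left[\iint d(x,y)^{-s}\,\mu^*(dx)\,\mu^*(dy)\right]<\infty$$
gives ${\rm dim}_{\rm H}{\rm Lf}(\check{\mathcal T})\ge s$ on $\mathcal E^c$ via the mass distribution principle, and letting $s\uparrow q^*/\beta$ concludes. I would compute $\mathcal I_s$ by a many-to-two / spine decomposition: under the size-biased law $M_\infty\cdot\mu^*\otimes\mu^*$, the pair $(x,y)$ shares a common spine $\mathbf i_1\cdots\mathbf i_T$ of random split-generation $T$, then diverges at level $T+1$ into two independent rescaled standard copies of $\check{\mathcal T}$, with each spine step offspring size-biased by $P^{q^*}$. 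Combined with the basic lower bound $d(x,y)\ge d(x,a_x)=\check P_{\mathbf i_1\cdots\mathbf i_{T+1}}^{\beta}H^{(x)}$, where $a_x$ is the attachment point of $x$'s subtree and $H^{(x)}$ is the height of $x$ in an independent standard copy of $\check{\mathcal T}$, this reduces $\mathcal I_s$ to a geometric series in $T$ with ratio $\varphi(2q^*-s\beta)$ times a constant involving negative moments of $H^{(x)}$ and the bounded one-step sum $\mathbb E[\sum_a P_a^{q^*-s\beta}]\le N$.

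The main obstacle is bounding negative moments of $H^{(x)}$, since without further information the height of a $\mu^*$-random leaf could be very small. This is precisely where the simplifying assumption is essential and our approximation method departs from \cite{34}: iterating $X_i>\epsilon L$ one step along the spine within the standard copy shows every leaf has distance at least $\epsilon L^{(x)}$ from the root of that copy, so $H^{(x)}\ge\epsilon L^{(x)}$, and the relevant negative moments of $H^{(x)}$ reduce to negative moments of $L$, which are finite by the hypothesis $\mathbb E[L^{-q/\beta}]<\infty$ of Theorem \ref{dim}. Since $s<q^*/\beta$ gives $2q^*-s\beta>q^*$ and hence $\varphi(2q^*-s\beta)<1$, the geometric series converges, $\mathcal I_s<\infty$, and the lower bound ${\rm dim}_{\rm H}{\rm Lf}(\check{\mathcal T})\ge q^*/\beta$ follows on $\mathcal E^c$.
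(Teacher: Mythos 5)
Your approach is essentially the paper's: establish the dichotomy via continuity/convexity of $\varphi$, build the Malthusian martingale $M_n=\sum_{\mathbf i\in\mathbb N^n}\check P_{\mathbf i}^{q^*}$ and its subtree limits $M_\infty^{(\mathbf i)}$, define the intrinsic measure $\mu^*$ on the leaves, bound the Frostman energy by decomposing over the last common branch point, and use the hypothesis $X_i>\epsilon L$ to reduce negative moments of the height to negative moments of $L$, which are finite by the Theorem \ref{dim} assumption. Your ``spine/many-to-two'' framing of the energy computation is cosmetically different from the paper's direct sum over $(\mathbf i,j,j')$, but it is the same estimate.

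There is, however, a genuine (if small) gap that recurs three times in your argument: you invoke $P_i\le 1$ to (a) prove continuity of $\varphi$ by dominated convergence, (b) verify the Kesten--Stigum $X\log X$ condition for uniform integrability of $(M_n)$, and (c) bound $\mathbb E[\sum_a P_a^{q^*-s\beta}]\le N$ in the energy computation. This bound is not available: the lemma sits in the setting of Theorem \ref{constr3}, hence in the space $\Xi$ where the $P_i$ are only required to be nonnegative and nonincreasing, and the paper's own proof explicitly flags that it needs a ``slight generalisation \ldots\ to include the possibility $\mathbb P(P_i>1)>0$.'' The fix is what the paper does: since $\varphi(q)<1$ for some $q>q^*$ and $N<\infty$, one gets $\mathbb E\bigl[\bigl(\sum_{1\le i\le N}P_i^{q^*}\bigr)^r\bigr]\le N^{r-1}\varphi(rq^*)<\infty$ for $r=q/q^*>1$, which gives continuity/convexity of $\varphi$ on an interval, uniform integrability of the martingale via the $L^r$ criterion, and control of the cross-terms in the energy integral (via Young's inequality rather than $P_a\le 1$). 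With that replacement your proof goes through.
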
 
\begin{proof} First note that $\varphi(q)=\mathbb{E}[\sum_{1\le i\le N}P_i^q]$ is finite, continuous, convex, on an interval including $0$ with $\varphi(0)\le N$. Hence, either
  $\varphi(0)<1$ and $q^*=0$, or $\varphi(0)\ge 1$ and $q^*\ge 0$ with $\varphi(q^*)=1$ exists. If $q^*=0$, the claim is trivial. So assume $q^*>0$ with $\varphi(q^*)=1$. Note that
  for $q>q^*$ with $\varphi(q)<1$, we have $r:=q/q^*>1$, and so
  $$\mathbb{E}\left[\left(\sum_{1\le i\le N}P_i^{q^*}\right)^r\;\right]\le N^r\mathbb{E}[(P_1^{q^*})^r]<N^r<\infty.$$
  By a slight generalisation of \cite[Theorem 1.1 and (1.20)]{Ber-book} to include the possibility $\bP(P_i>1)>0$, the process 
  $M_n=\sum_{\mathbf{i}\in\mathbb{N}^n}\check{P}_{\mathbf{i}}^{q^*}$ is a uniformly integrable martingale, whose terminal value $M_\infty$ is a.s. strictly positive on the
  event $\mathcal{E}^c$ of non-extinction. In the same way, we find a.s. martingale limits  
  $M_\infty^{(\mathbf{j})}:=\lim_{n\rightarrow\infty}\sum_{\mathbf{i}\in\mathbb{N}^n}\check{P}_{\mathbf{j}\mathbf{i}}^{q^*}/\check{P}_{\mathbf{j}}^{q^*}$ and note that,
  by construction, 
  \begin{equation}\label{inde}M_\infty^{(\mathbf{j})}=\sum_{i\ge 1}P_{\mathbf{j}i}^{q^*}M_\infty^{(\mathbf{j}i)},\qquad\mbox{a.s., with }(P_{\mathbf{j}i})_{i\ge 1},M_\infty^{(\mathbf{j}i)},i\ge 1,\mbox{ independent.}
  \end{equation}
  By specifying $\mu^*(\mathcal{S}_{\mathbf{j}})=\check{P}_{\mathbf{j}}^{q^*}M_\infty^{(\mathbf{j})}$ for the subtree $\mathcal{S}_{\mathbf{j}}\subset\check{\mathcal{T}}$ 
  rooted at $\check{X}_{\mathbf{j}}$ and corresponding to $\check{P}_{\mathbf{j}}$, we define a measure $\mu^*$ on $\check{\mathcal{T}}$. More precisely, we specify (using
  the embedding (\ref{expl}) into $l^1(\mathbb{U})$ to be definite)
  $$m(\check{X}_{\mathbf{i}}+xe_\mathbf{i})=\sum_{j=1}^N\check{P}_{\mathbf{i}j}^{q^*}M_\infty^{(\mathbf{i}j)}1_{\{\check{P}_{\mathbf{i}}^\beta X_{\mathbf{i}j}\ge x\}}\quad\mbox{for each }0<x\le \check{P}_{\mathbf{i}}^\beta L_{\mathbf{i}},$$
  and apply \cite[Proposition 2.7]{34} to obtain the existence and uniqueness of the random measure $\mu^*$. Clearly, the countably many rescaled strings of 
  $\check{\mathcal{T}}$ carry zero $\mu^*$-mass a.s., so $\mu^*$ is supported by ${\rm Lf}(\check{\mathcal{T}})$ a.s.. We now check that for all $0<\gamma<q^*/\beta$
  \begin{align*}&\mathbb{E}\left[\int_{\check{\mathcal{T}}}\int_{\check{\mathcal{T}}}\frac{1}{d(v,v^\prime)^\gamma}\mu^*(dv)\mu^*(dv^\prime)\right]\\
    &\le\sum_{n\ge 0}\sum_{\mathbf{i}\in\mathbb{N}^n}2\sum_{1\le j<j^\prime\le N}\mathbb{E}\left[\int_{\check{\mathcal{T}}}\int_{\check{\mathcal{T}}}1_{\{v\in\mathcal{T}_{\mathbf{i}j},v^\prime\in\mathcal{T}_{\mathbf{i}j^\prime}\}}\frac{1}{d(v,\check{X}_{\mathbf{i}j})^\gamma}\mu^*(dv)\mu^*(dv^\prime)\right]\\
    &=2\sum_{n\ge 0}\sum_{\mathbf{i}\in\mathbb{N}^n}\bE\left[\check{P}_{\mathbf{i}}^{2q^*-\beta\gamma}\right]\sum_{1\le j<j^\prime\le N}\mathbb{E}\left[P_{\mathbf{i}j}^{q^*-\beta\gamma}P_{\mathbf{i}j^\prime}^{q^*}\right]\mathbb{E}\left[M_\infty^{(\mathbf{i}j^\prime)}\right]\mathbb{E}\left[\int_{\check{\mathcal{T}}_{\mathbf{i}j}}\frac{1}{d_{\mathbf{i}j}(v,\rho_{\mathbf{i}j})^\gamma}\mu_{\mathbf{i}j}^*(dv)\right],
  \end{align*}    
  by the independence in (\ref{inde}), where $(\check{\mathcal{T}}_{\mathbf{i}j},d_{\mathbf{i}j},\rho_{\mathbf{i}j})$ is the tree 
  constructed from the RTF $(\xi_{\mathbf{i}j\mathbf{j}},\mathbf{j}\in\mathbb{U})$ as in Corollary \ref{recconcrt2}, which has the same distribution as 
  $(\check{\mathcal{T}},d,\rho)$, and whose scaling by $\check{P}_{\mathbf{i}}^\beta P_{\mathbf{i}j}^\beta$ (and shifting in $l^1(\mathbb{U})$, as appropriate) is the 
  subtree $(\mathcal{S}_{\mathbf{i}j},d,\check{X}_{\mathbf{i}j})$ of $\check{\mathcal{T}}$. The analogue $\mu^*_{\mathbf{i}j}$ of $\mu^*$ is the push-forward of the 
  scaled restriction $\check{P}_{\mathbf{i}j}^{-q^*}\mu^*|_{\mathcal{S}_{\mathbf{i}j}}$ from $\mathcal{S}_{\mathbf{i}j}\subset\check{\mathcal{T}}$ to 
  $\check{\mathcal{T}}_{\mathbf{i}j}$ under the natural map. Since $\mathbb{E}[M_\infty^{(\mathbf{i}j^\prime)}]=1$, we obtain
  $$\mathbb{E}\left[\int_{\check{\mathcal{T}}}\int_{\check{\mathcal{T}}}\frac{1}{d(v,v^\prime)^\gamma}\mu^*(dv)\mu^*(dv^\prime)\right]
    \le\frac{2}{1-\varphi(2q^*-\beta\gamma)}(N-1)\varphi(2q^*-\beta\gamma)\frac{1}{\epsilon^\gamma}\mathbb{E}[L^{-\gamma}]<\infty,$$
  also using that $d(v,\rho_{\mathbf{i}j})\ge\epsilon L_{\mathbf{i}j}$ for $\mu^*_{\mathbf{i}j}$-a.e. $v\in\check{\mathcal{T}}_{\mathbf{i}j}$, by assumption. 
  The proof is complete, by an application of Frostman's lemma, as in \cite{33,34}, noting that $\gamma<q^*/\beta$ was arbitrary. 
\end{proof}

\begin{proof}[Proof of Theorem \ref{dim}] Since the upper bound was obtained in Lemma \ref{upper}, let us turn to the lower bound, which Lemma \ref{lower} only supplies
  under additional assumptions. Consider now the general setting of Theorem \ref{dim}. Also assume $q^*>0$, as the claimed lower bound
  is trivial otherwise. Let $\xi=([0,L],(X_i)_{i\ge 1},(P_i)_{i\ge 1})$ be a random generalised string from the RTF underlying $\check{\mathcal{T}}$. 
  
  Let $\epsilon\in[0,1)$ and $N\in\mathbb{N}$ and denote by $\xi^{(N,\epsilon)}$ a string with atom masses changed to $P_i1_{\{1\le i\le N,X_i>L\epsilon\}}$. Modifying 
  some offending locations of annihilated atoms, if necessary, and assuming $\epsilon\in(0,1)$, Lemma \ref{lower} applies to obtain subtrees 
  $\check{\mathcal{T}}^{(N,\epsilon)}\subset\check{\mathcal{T}}$, with ${\rm dim}_{\rm H}({\rm Lf}(\check{\mathcal{T}}^{(N,\epsilon)})\ge q^*_{N,\epsilon}/\beta$, where 
  $$q^*_{N,\epsilon}=\inf\{q>0\colon\varphi_{N,\epsilon}(q)<1\}\le q^*\qquad\mbox{and}\qquad
    \varphi_{N,\epsilon}(q)=\mathbb{E}\left[\sum_{i\ge 1}P_i^q1_{\{1\le i\le N,X_i>L\epsilon\}}\right]\le\varphi(q).$$ 
  By construction, ${\rm Lf}(\check{\mathcal{T}}^{(N,\epsilon)})\setminus{\rm Lf}(\check{\mathcal{T}})$ is at most countable (top ends of strings, if with positive
  probability, $\xi$ but not $\xi^{(N,\epsilon)}$ has an atom at $L$). Hence, ${\rm dim}_{\rm H}({\rm Lf}(\check{\mathcal{T}}))\ge q^*_{N,\epsilon}/\beta$ for all 
  $\epsilon\in(0,1)$, $N\in\mathbb{N}$, on the event $\mathcal{E}^c_{N,\epsilon}$ of non-extinction. 

  As $q^*>0$, and as $\varphi_{N,\epsilon}$ is continuous starting from the expected number $\varphi_{N,\epsilon}(0)$ of non-zero atoms, there will be $\epsilon_N>0$
  for all $N\ge 2$ such that $q_{N,\epsilon}^*>0$ for all $\epsilon<\epsilon_N$. For such $\epsilon<\epsilon_N$, $\varphi_{N,0}$ is convex, so that $q^*_{N,0}$ is an   
  isolated root of $\varphi_{N,0}-1$, and as $\epsilon\downarrow 0$, we find $q_{N,\epsilon}^*\uparrow q_{N,0}^*$. Similarly, $q_{N,0}^*\uparrow q^*$, as
  $N\rightarrow\infty$. Since also $\mathcal{E}^c=\bigcup_{N\ge 2,\epsilon>0}\mathcal{E}_{N,\epsilon}$, this completes the proof of the claim that 
  ${\rm dim}_{\rm H}({\rm Lf}(\check{\mathcal{T}}))=q^*/\beta$ on $\mathcal{E}^c$.   
  Since ${\rm dim}_{\rm H}(\check{\mathcal{T}}\setminus{\rm Lf}(\check{\mathcal{T}}))=1$, we then deduce that ${\rm dim}_{\rm H}(\check{\mathcal{T}})=\max\{q^*/\beta,1\}$,
  on $\mathcal{E}$.
\end{proof}

\subsection{Line-breaking constructions and binary embedding of the stable trees}

In Corollary \ref{introcornew} we obtained CRTs as limits of binary bead-splitting processes by embedding into the CRTs of Theorem \ref{introthm}. This includes the 
bead splitting process $(\cT_k,\mu_k)$, $k\ge 0$ based on $(1/2,1/2)$-strings of beads as discussed as the $\theta=2$ case of Example \ref{example}. If we drop the mass 
measures, the increments $\cT_{k+1}\setminus\cT_k$ of $(\cT_k,k\ge 0)$, are just isometric to intervals that we can all take successively from the half-line $[0,\infty)$.
In the case of the Brownian CRT, the sequence has a well-known autonomous description, which we can formulate as follows. 
\begin{example}[Aldous's line-breaking construction]\rm Consider the points $0<C_0<C_1<\cdots$ of an inhomogeneous Poisson process of intensity $tdt$ on the line
    $[0,\infty)$. Let $\cT_0$ be a one-branch
    tree of length $C_0$. For $k\ge 0$, to obtain $\cT_{k+1}$ conditionally given $\cT_k$, pick a point $J_k\in\cT_k$ from the normalised length measure on the branches
    and attach at $J_k$ a branch of length $C_{k+1}-C_k$. The trees converge, as $k\rightarrow\infty$ to the Brownian CRT $\cT$, when suitably represented in $\bT$ or 
    $\bT^{\rm emb}$. Equip $\cT$ in $\bT^{\rm emb}$ with the almost sure weak 
    limit of the normalised length measure on the branches of $\cT_k$, as $k\rightarrow\infty$. 
\end{example} 
We can similarly study processes as in Corollary \ref{introcornew} in the setting of the more general Theorem \ref{constr2}. We will here be particularly interested in the case of multifurcating branch points. 
\begin{corollary}[Multifurcating bead-splitting processes] Let $p\ge 1$, let $\xi=(\mathcal T_0, (X_i^{(0)})_{i \geq 1},  (P_i^{(0)})_{i \geq 1},\Lambda_0)$ be a random 
  generalised string of length $L$ with $\mathbb{E}[L^p]<\infty$. Let $\beta\in(0,\infty)$ such that $\mathbb{E}[\sum_{j\ge 1}P_j^{p\beta}]<1$. For $k \geq 0$, to obtain $(\mathcal T_{k+1}, (X_i^{(k+1)})_{i \geq 1},  (P_i^{(k+1)})_{i \geq 1},\Lambda_{k+1})$ conditionally given $(\mathcal T_{k}, (X_i^{(k)})_{i \geq 1},  (P_i^{(k)})_{i \geq 1},\Lambda_{k})$, pick an atom $(X_i^{(k)},P_i^{(k)})$ with probability proportional to $P_i^{(k)}$, $i\ge 1$, attach at $X_i^{(k)}\in\mathcal T_k$ an independent isometric copy of $\xi$ with metric rescaled by $(P^{(k)}_i)^\beta$ and measure/atom masses rescaled by $P^{(k)}_i$. Let $\mu_k=\Lambda_k + \sum_{i \geq 1}P_i^{(k)} \delta_{X_i^{(k)}}$. Then there exists a random weighted $\mathbb{R}$-tree $({\mathcal T},\mu)$ such that
$$ \lim \limits_{k \rightarrow \infty} \left({\mathcal T}_k,\mu_k\right)= \left({\mathcal T},\mu\right) \quad \text{a.s. in the Gromov-Hausdorff-Prokhorov topology on $\mathbb T_{\rm w}$}.$$
\end{corollary}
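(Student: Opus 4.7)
The plan is to adapt the embedding argument used in the proof of Corollary \ref{introcornew} to the more general setting of generalised strings $\xi^\bullet=([0,L],(X_i)_{i\ge 1},(P_i)_{i\ge 1},\Lambda)$. As before, the idea is to construct an associated RTF $(\xi^\bullet_{\mathbf i},\mathbf i\in\mathbb U)$ and the limiting weighted tree $(\check{\mathcal T}^{\rm emb},\check{\mu}^{\rm emb})$ from Proposition \ref{mmreccon} and Corollary \ref{eqmeas}, and then to realise $({\mathcal T}_k,\mu_k)$, $k\ge 0$, as an increasing sub-object $(\overline{\mathcal T}_k,\overline{\mu}_k)$ of this limit.

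Specifically, I would set $(\overline{\mathcal T}_0,\overline{\mu}_0)=(\check{E}_\varnothing,\check{\mu}_\varnothing+\check{\Lambda}_\varnothing)$, i.e.\ the root branch together with its atomic measure and its continuous $\Lambda$-part. Given $(\overline{\mathcal T}_k,\overline{\mu}_k)$, I pick one of the remaining atoms of $\overline{\mu}_k$ with probability proportional to its mass. If the selected atom is at $\check{X}_{\mathbf i j}$ with mass $\check{P}_{\mathbf i j}$, I remove that atom and attach the rescaled string-with-measure $(\check{E}_{\mathbf i j},\check{\mu}_{\mathbf i j}+\check{\Lambda}_{\mathbf i j})$ at $\check{X}_{\mathbf i j}$. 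The independence structure of the RTF ensures that, conditionally given the history up to step $k$, the new branch is an independent isometric copy of $\xi^\bullet$ scaled by $\check{P}_{\mathbf i j}^\beta$ in lengths and by $\check{P}_{\mathbf i j}$ in atom and measure masses; this reproduces the one-step transition of $({\mathcal T}_k,\mu_k)$, so the two processes are equal in distribution.

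It remains to show that $\lim_{k\to\infty}(\overline{\mathcal T}_k,\overline{\mu}_k)=(\check{\mathcal T}^{\rm emb},\check{\mu}^{\rm emb})$ a.s.\ in the Hausdorff-Prokhorov topology, since projecting to $\mathbb T_{\rm w}$ then yields the claim by Corollary \ref{eqmeas}. For the Hausdorff part I reuse the argument from the end of the proof of Corollary \ref{introcornew}: for any $\epsilon>0$, compactness of $\check{\mathcal T}^{\rm emb}$ implies that only finitely many connected components of $\{x\in\check{\mathcal T}^{\rm emb}\colon {\rm ht}(\check{\mathcal T}^{\rm emb}_x)\le\epsilon\}$ attain height exactly $\epsilon$; each such component intersects some $\check{E}_{\mathbf i}$, and each successive atom $\check{X}_{i_1},\check{X}_{i_1 i_2},\ldots$ along the path to $\check{E}_{\mathbf i}$ is eventually picked after a geometrically distributed (hence finite) number of steps, so all these finitely many components are reached in finite time.

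For the Prokhorov part, mass is preserved at each replacement step, so $\overline{\mu}_k$ and $\check{\mu}^{\rm emb}$ differ only through the atoms of $\overline{\mu}_k$ that have not yet been split. Each such atom has mass $\check{P}_{\mathbf i}$ for some $\mathbf i\in\mathbb{N}^n$ with $n$ growing, and the argument used in the construction of a CRT shows that $\max_{\mathbf i\in\mathbb N^n}\check{P}_{\mathbf i}\to 0$ a.s., so the total mass sitting on these not-yet-split atoms converges to 0 a.s. By a Wasserstein-type coupling (keeping atomic mass on the already-split branches together with its image under the natural inclusion and transporting the remaining unsplit atomic mass to its embedded location, which lies within the vanishing Hausdorff distance), $\delta_{\rm P}(\overline{\mu}_k,\check{\mu}^{\rm emb})\to 0$ a.s. The main obstacle is essentially this combined control: ensuring that the $\Lambda$-components, once installed, sit in the correct places in $\check{\mathcal T}^{\rm emb}$ and do not need to be re-approximated, while the residual atomic mass not yet replaced must be shown to vanish \emph{uniformly} on the tree. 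The first is automatic by construction, and the second follows from the $L^p$-bound $\mathbb E[\sum_{\mathbf i\in\mathbb N^n}\check{P}_{\mathbf i}^{p\beta}]\le(\mathbb E[\sum_j P_j^{p\beta}])^n\to 0$ as in the proof of Proposition \ref{recconcrt}, combined with the size-biased sampling rule, which ensures that atoms of comparatively large mass are preferentially removed.
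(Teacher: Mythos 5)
Your proposal is essentially the paper's own proof, which for this corollary consists of a single sentence referring back to Corollary \ref{introcornew}; you carry out that adaptation correctly, in particular by retaining the $\check{\Lambda}$-components in $\overline{\mu}_k$ and by sampling size-biased among atoms only rather than from all of $\overline{\mu}_k$, which are exactly the changes needed when passing from strings of beads to generalised strings, and your distributional identification and Hausdorff-covering argument are the same as in the proof of Corollary \ref{introcornew}.

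One intermediate claim in your Prokhorov step is false, though it is also unused. From $\max_{\mathbf{i}\in\mathbb{N}^n}\check{P}_{\mathbf{i}}\to 0$ you infer that ``the total mass sitting on these not-yet-split atoms converges to $0$ a.s.''. This does not hold: when $\Lambda=0$ the total atomic mass of $\overline{\mu}_k$ equals $1$ for every $k$, and in general the unsplit atoms at step $k$ need not all lie at a common depth $n\to\infty$, since a small atom at depth $1$ may survive arbitrarily long even under size-biased sampling. What actually suffices, and what your transport plan implicitly uses, is that $\overline{\mu}_k$ is the push-forward of $\check{\mu}^{\rm emb}$ under the nearest-point projection onto $\overline{\mathcal T}_k$: each unsplit atom at $\check{X}_{\mathbf{i}}$ is matched to the $\check{\mu}^{\rm emb}$-mass in the subtree $\mathcal{S}_{\mathbf{i}}$ hanging off $\check{X}_{\mathbf{i}}$, whose height is at most $\delta_{\rm H}(\overline{\mathcal T}_k,\check{\mathcal T}^{\rm emb})$, so $\delta_{\rm P}(\overline{\mu}_k,\check{\mu}^{\rm emb})\le\delta_{\rm H}(\overline{\mathcal T}_k,\check{\mathcal T}^{\rm emb})\to 0$ with no separate control of residual atom masses required. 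Dropping the false assertion leaves a correct argument.
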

\begin{proof} The proof of Corollary \ref{introcornew} at the end of Section \ref{resultsstrings} is easily adapted.
\end{proof}
Goldschmidt and Haas \cite{12} studied line-breaking constructions of stable trees (without atoms on the branches). They are based on what they call the Mittag-Leffler Markov chain (MLMC) of parameter $\beta$, see \cite{12}, starting from the length of a $(\beta,\beta)$-string of beads.  
\begin{example}[Stable line-breaking construction]\rm Consider the MLMC $0<C_0<C_1<\cdots$. Let $\cT_0$ be a one-branch tree of length $C_0$, which
    has no branch points. For $k\ge 0$, to obtain $\cT_{k+1}$ conditionally given $\cT_k$ with branch points $v_i$ and weights $W_k^{(i)}$ so that total length plus sum
    of weights add up to $C_k$, select a branch point $v_i$ with probability proportional to $W_k^{(i)}$ or a branch between two branch points with probability
    proportional to its length. If a branch is selected, create a new branch point $v_i$ sampled from the length measure on the branch and select it. Attach at the 
    selected $v_i$ a branch of length $B_k(C_{k+1}-C_k)$, for an independent $B_k\sim{\rm Beta}(1,1/\beta-2)$, and increase the weight $W_k^{(i)}$ by 
    $(1-B_k)(C_{k+1}-C_k)$. The trees converge, as $k\rightarrow\infty$, to the stable tree $\cT$ of index $1/(1-\beta)$. 
\end{example}
Goldschmidt and Haas \cite{12} ask if there is a sensible way to associate a notion of ``length'' $W_k^{(i)}$ with the vertex $v_i$. A natural possibility is to make the branches longer by attaching a
branch of length $C_{k+1}-C_k$ instead, but this poses some questions. First, does this construction have a compact limit? Second, how do we distinguish the extra lengths from the lengths present in the
stable tree? Third, is this an interesting structure with further properties that makes this ``sensible''? In the context of the present paper, the fundamental question is how to turn branches into strings
of beads. While we fully address these questions in forthcoming work \cite{forth}, let us here construct the binary compact limiting CRT.

Specifically, we construct a binary tree $\mathcal T^\circ$ using Theorem \ref{introthm} based on i.i.d. isometric copies of a $\beta$-mixed string of beads, which we 
define and discuss in our final example:

\begin{example}\label{betamixed}\rm Let $\beta \in (0,1/2]$, and consider the function $\Psi_\beta: \widetilde{\Xi}_s \times \widetilde{\Xi}_s \times [0,1] \rightarrow  \widetilde{\Xi}_s$,
where, for $([0,\ell_1],\lambda_1),  ([0,\ell_2], \lambda_2) \in \widetilde{\Xi}_{\rm s}$ and $b \in [0,1]$, we define
\begin{equation} \left([0,\ell], \lambda \right) :=\Psi_\beta \left(\left([0,\ell_1], \lambda_1 \right), \left([0,\ell_2], \lambda_2\right), b \right) \label{stringmerged}
\end{equation}
via $\ell:=b^\beta \ell_1+(1-b)^\beta \ell_2$, and with $\ell':=b^\beta \ell_1$ the mass measure $\lambda$ on $[0,\ell]$ given by
\begin{equation} 
\lambda\left(\left[0,x\right]\right)=\begin{cases} b \cdot \lambda_1\left(\left[0,b^{-\beta}x\right]\right), &\text{ if } x \in \left[0,\ell'\right], \\
b+ \left(1-b\right) \cdot \lambda_2\left(\left[0,\left(1-b\right)^{-\beta}\left(x-\ell'\right)\right]\right), &\text{ if } x \in \left[\ell', \ell\right]. \end{cases}
\end{equation}
The string of beads $\Psi(\widetilde{\xi}_1,\widetilde{\xi}_2,B)$, where $\widetilde{\xi}_1$ and $\widetilde{\xi}_2$ are independent $(\beta, 1-2\beta)$- and 
$(\beta, \beta)$-strings of beads, respectively, and where $B \sim \rm{Beta}(1-2\beta, \beta)$ is independent, is called a \textit{$\beta$-mixed} string of beads. Since 
the lengths of $(\alpha,\theta)$-strings of beads as defined in \cite[Definition 4]{1} generalising Definition \ref{introdef} have moments of all orders, Theorem \ref{introthm} applies
to give a limiting CRT, which we denote by $\cT^\circ$, and whose height has moments of all orders, by Corollary \ref{heightmom}. 
\end{example}

While this example fits perfectly into the theory developed in this paper, $\mathcal{T}^\circ$ is not the binary compact limiting CRT we require. The modification is 
simple and points to a range of possible generalisations of the constructions presented in this paper, away from identical distribution of the random strings of beads. 
For ease of reference, we only present the result relevant for us in \cite{forth}, leaving any generalisations to the reader.  

\begin{prop} Consider $(\xi_{\mathbf{i}},\mathbf{i}\in\mathbb{U})$ such that $\xi_\varnothing$ is a $(\beta,\beta)$-string of beads independent of independent 
  $\beta$-mixed strings of beads $\xi_{\mathbf{i}}$, $\mathbf{i}\in\mathbb{U}\setminus\{\varnothing\}$. For $\mathbf{i}\in\mathbb{U}\setminus\{\varnothing\}$, let 
  $\mathcal{T}_{\mathbf{i}}^*$ be the tree constructed as in Proposition \ref{recconcrt}, but from the RTF $\{\xi_{\mathbf{i}\mathbf{j}},\mathbf{j}\in\mathbb{U}\}$, and 
  let $\mathcal{T}^*:=\mathcal{T}_\varnothing^*=\phi_\beta(\xi_{\varnothing},\mathcal{T}_j^*,j\ge 1)$. Then
  \begin{equation}\mathcal{T}_{\mathbf i}^*=\phi_\beta(\xi_{\mathbf i},\mathcal{T}_{{\mathbf i}j}^*,j\ge 1)\qquad\mbox{for all }\mathbf{i}\in\mathbb{U}.
  \end{equation}    
  Furthermore, we can equip $\mathcal{T}^*$ with a mass measure $\mu^*$ as in Proposition \ref{mmreccon} and Corollary \ref{eqmeas}.   
\end{prop}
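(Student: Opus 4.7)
The plan is to leverage the embedded construction of Corollary \ref{recconcrt2}, where trees are realised as concrete compact subsets of $l^1(\mathbb{U})$ and the limiting tree is the closure of an increasing union. This turns both parts of the statement into direct verifications rather than soft limit-distribution arguments.

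First, I would establish the fixpoint equation $\mathcal{T}_{\mathbf i}^*=\phi_\beta(\xi_{\mathbf i},\mathcal{T}_{{\mathbf i}j}^*,j\ge 1)$ for all $\mathbf{i}\in\mathbb{U}$. For $\mathbf{i}=\varnothing$ this is the definition. Fix $\mathbf{i}\in\mathbb{U}\setminus\{\varnothing\}$. Since $\mathbf{i}\mathbf{j}\neq\varnothing$ for every $\mathbf{j}\in\mathbb{U}$, the RTF $\{\xi_{\mathbf{i}\mathbf{j}},\mathbf{j}\in\mathbb{U}\}$ is i.i.d. $\beta$-mixed, and Example \ref{betamixed} together with Proposition \ref{recconcrt} produces the embedded increasing sequence $(\check{\mathcal T}_n^{(\mathbf{i}),\rm emb})_{n\ge 0}$ of Corollary \ref{recconcrt2} with a.s. Hausdorff limit $\mathcal{T}_{\mathbf{i}}^{*,\rm emb}$. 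By the defining recursion (\ref{taucon2}),
\begin{equation*}
  \check{\mathcal T}^{(\mathbf{i}),\rm emb}_n=[0,L_{\mathbf i}]e_{\varnothing}\cup\bigcup_{j\ge 1}\left(X_{\mathbf{i}j}e_\varnothing+\theta_j\left(P_{\mathbf{i}j}^\beta\check{\mathcal T}^{(\mathbf{i}j),\rm emb}_{n-1}\right)\right),\qquad n\ge 1.
\end{equation*}
Taking the monotone union over $n$ and closing in $l^1(\mathbb{U})$, as in (\ref{expl}), exchanges the limit with the bounded operations $\theta_j$ and scaling by $P_{\mathbf{i}j}^\beta$, yielding
\begin{equation*}
  \mathcal{T}^{*,\rm emb}_{\mathbf i}=[0,L_{\mathbf i}]e_{\varnothing}\cup\overline{\bigcup_{j\ge 1}\left(X_{\mathbf{i}j}e_\varnothing+\theta_j\left(P_{\mathbf{i}j}^\beta\mathcal{T}^{*,\rm emb}_{\mathbf{i}j}\right)\right)}.
\end{equation*}
This is precisely the embedded $\phi_\beta$ applied to $(\xi_{\mathbf{i}},\mathcal{T}^{*,\rm emb}_{\mathbf{i}j},j\ge 1)$, and a.s. compactness of the left-hand side forces the tuple on the right to lie in $C_\beta$. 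Applying $\pi$ then yields $\mathcal{T}_{\mathbf i}^*=\phi_\beta(\xi_{\mathbf i},\mathcal{T}_{{\mathbf i}j}^*,j\ge 1)$ in $\mathbb{T}$.

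For the mass measure, I would follow the proof of Proposition \ref{mmreccon} almost verbatim. Both $(\beta,\beta)$- and $\beta$-mixed strings are $\Xi_{\rm s}$-valued with discrete atom measures summing to $1$ a.s. and lengths with moments of all orders (Example \ref{betamixed}), hence every $\xi_{\mathbf{i}}$ is $\Xi_{\rm g}$-valued with $\Lambda_{\mathbf{i}}=0$ and satisfies the moment conditions for all $p\ge 1$ with $p\beta>1$, by (\ref{pbetha}). Setting
\begin{equation*}
  \check{\mu}^{*,\rm emb}_n=\sum_{\mathbf{i}\in\mathbb{N}^n,\,j\ge 1}\check{P}_{\mathbf{i}j}\delta_{\check{X}_{\mathbf{i}j}},\qquad n\ge 0,
\end{equation*}
yields random probability measures on $l^1(\mathbb{U})$ whose projections onto any finite collection of coordinate branches stabilise as $n$ grows and therefore determine a random probability measure $\mu^{*,\rm emb}$ on $[0,\infty)^{\mathbb{U}}$ via Kolmogorov consistency. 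Tightness inherited from a.s. compactness of $\mathcal{T}^{*,\rm emb}$ upgrades this to a.s. convergence in the Prokhorov topology on $l^1(\mathbb{U})$, so that $\mu^{*,\rm emb}$ is supported on $\mathcal{T}^{*,\rm emb}$. Projecting via $\pi_{\rm w}$ and invoking Corollary \ref{eqmeas} delivers the weighted tree $(\mathcal{T}^*,\mu^*)\in\mathbb{T}_{\rm w}$.

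The main subtlety to address is that $\xi_\varnothing$ is not identically distributed with the remaining $\xi_{\mathbf{i}}$, so one must check that Propositions \ref{recconcrt} and \ref{mmreccon} never actually require identity in distribution at the root. Inspection shows they use only independence across $\mathbb{U}$ together with moment bounds that apply to both string types separately; in particular, the geometric decay in (\ref{bound}) relies on $\mathbb{E}[\sum_jP_j^{p\beta}]<1$ from generation $1$ onwards, with the root contributing a single almost surely bounded additive term to the height and total mass. Once this robustness of the proofs to a non-identical root distribution is noted, both the Hausdorff convergence of the approximating trees and the Prokhorov convergence of the approximating measures transfer without further modification.
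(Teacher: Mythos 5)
Your overall structure is sound, but you underplay --- and slightly mis-state --- the one point the paper's proof is actually about: that $\phi_\beta(\xi_\varnothing,\mathcal{T}_j^*,j\ge 1)$ really lands in $C_\beta$, i.e.\ does not degenerate to the one-point tree. For $\mathbf{i}\neq\varnothing$ you correctly deduce membership in $C_\beta$ from the a.s.\ compactness of the limit $\mathcal{T}_{\mathbf{i}}^{*,\rm emb}$, which exists by the unmodified Proposition~\ref{recconcrt}. For $\mathbf{i}=\varnothing$ you write ``this is the definition,'' but the definition is only as good as its non-degeneracy, and that is precisely what needs verifying. You defer this to the final ``robustness'' paragraph, and the assertion there that ``the root contribut[es] a single almost surely bounded additive term to the height and total mass'' is false as stated: $L_\varnothing$ for a $(\beta,\beta)$-string has moments of all orders but is certainly not a.s.\ bounded, and neither is $\sup_j P_j^\beta{\rm ht}(\mathcal{T}_j^*)$. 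What is true --- and what would make your argument go through --- is that the Cauchy estimate (\ref{bound}) in Proposition~\ref{recconcrt} needs only $\mathbb{E}[L^p]<\infty$ and $\mathbb{E}[\sum_i P_i^{p\beta}]<1$ for \emph{each} $\xi_{\mathbf{i}}$, both of which hold here since $(\beta,\beta)$-strings of beads also satisfy (\ref{pbetha}); but you would then need to explicitly rerun that Cauchy argument with the non-identical root to obtain a compact limit and deduce, as for $\mathbf{i}\neq\varnothing$, that the tuple lies in $C_\beta$.

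The paper sidesteps all this with a shorter and more self-contained check: it observes $\mathcal{T}_{\mathbf{i}}^*\overset{d}{=}\mathcal{T}^\circ$ for all $\mathbf{i}\neq\varnothing$, chooses $p$ with $p\beta\ge 1$, and applies Markov's inequality and Borel--Cantelli to $\sum_{i\ge 1}\mathbb{P}(P_i^\beta\,{\rm ht}(\mathcal{T}_i^*)>\epsilon)\le\epsilon^{-p}\,\mathbb{E}[{\rm ht}(\mathcal{T}^\circ)^p]<\infty$, concluding directly that only finitely many grafted subtrees have height exceeding any $\epsilon>0$ and hence that the grafted tree is compact. This avoids having to revisit the proof of Proposition~\ref{recconcrt} at all, since the $\mathcal{T}_i^*$ are already in hand. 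Your mass-measure argument following Proposition~\ref{mmreccon} is fine. In short: different, valid route, but you need to (a) actually establish compactness at the root rather than relegate it to a remark, and (b) replace the ``a.s.\ bounded'' claim by the correct moment-based justification.
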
 
\begin{proof} Most of this follows straight from the previous constructions. Note that $\mathcal{T}_{\mathbf{i}}$ has the same distribution as $\mathcal{T}^\circ$ in 
  Example \ref{betamixed} for all $\mathbf{i}\in\mathbb{U}\setminus\{\varnothing\}$. Let us check that $\mathcal{T}_\varnothing$ does not collapse to a point tree 
  (recall that the point tree is what $\phi_\beta$ assigns if compactness fails when grafting rescaled $\mathcal{T}_i^*$ onto $\xi_\varnothing$, $i\ge 1$). The trees attached to the atoms of $\xi_\varnothing$ on $[0,L_\varnothing]$ have 
  heights $P_i^\beta{\rm ht}(\mathcal{T}_i^*)$, $i\ge 1$. Hence for all $\epsilon>0$,
  $$\sum_{i\ge 1}\mathbb{P}\left(P_i^\beta{\rm ht}(\mathcal{T}_i^*)>\epsilon\right)\le\epsilon^{-p}\sum_{i\ge 1}\mathbb{E}\left[P_i^{p\beta}\right]\mathbb{E}\left[{\rm ht}(\mathcal{T}_i^*)^p\right]\le\epsilon^{-p}\mathbb{E}\left[{\rm ht}(\mathcal{T}^\circ)^p\right]<\infty,$$
  where we choose $p$ such that $p\beta\ge 1$ and recall that $\mathcal{T}^\circ$ has all moments finite, as noted in Example \ref{betamixed}. By the first Borel-Cantelli
  lemma and the compactness of $\mathcal{T}_i^*$, $i\ge 1$, and of $\xi_\varnothing$, we conclude that the tree after grafting is compact, as required.
\end{proof}


\section*{Acknowledgements} 

We thank Alex Watson, Igor Kortchemski and Jean Bertoin for discussions about growth fragmentations, and Jean Bertoin and B\'en\'edicte Haas for asking us about the  
Hausdorff dimensions of the new trees.

\bibliographystyle{acm}
\bibliography{BinEmb}	 

\nocite{*}

\end{document}